\newcommand{\st}{\text{s.t.}}
\newcommand{\ST}{\quad\text{s.t.}\quad}
\newcommand {\R}{\mathbb{R}}
\newcommand {\N}{\mathbb{N}}
\newcommand {\T}{\mathcal{T}}
\newcommand {\Li}{\mathcal{L}}
\newcommand {\C}{\mathcal{C}}
\newcommand{\supp}{\text{supp}}
\newcommand{\eps}{\varepsilon}
\newtheorem{theorem}{Theorem}[section]
\newtheorem{lemma}[theorem]{Lemma}
\newtheorem{corollary}[theorem]{Corollary}
\newtheorem{definition}[theorem]{Definition}
\newtheorem{example}[theorem]{Example}
\begin{document}




\author{
  Tim Hoheisel$^{\text{a}}$\\
  \texttt{tim.hoheisel@mcgill.ca}
  \and
  Blanca Pablos$^{\text{b}}$\\
  \texttt{blanca.pablos@unibw.de}
  \and
  Aram-Alexandre Pooladian$^{\text{a}}$\\
  \texttt{aram-alexandre.pooladian@mail.mcgill.ca}
  \and
  Alexandra Schwartz$^{\text{c}}$\\
  \texttt{alexandra.schwartz@tu-darmstadt.de}
  \and
  Luke Steverango$^{\text{d}}$\\
  \texttt{13lwps@queensu.ca}
}
\title{A Study of One-Parameter Regularization Methods for Mathematical Programs with Vanishing Constraints}
\date{
    \small{
    $^{\text{a}}$McGill University, Department of Mathematics and Statistics, 805 Sherbrooke Street West, Montreal, Canada, H3A 0B9\\%
    $^{\text{b}}$University of the Armed Forces, Werner-Heisenberg-Weg 39, 85577 Neubiberg, Germany\\%
    $^{\text{c}}$Technical University of Darmstadt, Dolivostra\ss e 15, 64293 Darmstadt, Germany \\%
    $^{\text{d}}$Queen's University, Department of Mathematics and Statistics, 48 University Ave., Kingston, Ontario, Canada, K7L 3N6 \\}%
    \today
}

\maketitle

\begin{abstract} 
    Mathematical programs with vanishing constraints (MPVCs) are a class of nonlinear optimization problems with applications  to various engineering problems such as truss topology design and robot motion planning.
    MPVCs are difficult problems from both a theoretical and numerical perspective:  the combinatorial nature of the vanishing constraints often  prevents  standard constraint qualifications and  optimality conditions from being attained; moreover,  the feasible set  is inherently nonconvex, and  often has no interior around points of interest. 
    In this paper, we therefore study and compare four regularization methods for the numerical solution of MPVCS. Each method depends on a single regularization parameter, which is used to embed the original MPVC into a sequence of standard nonlinear programs.
	Convergence results for these methods based on both exact and approximate stationary of the subproblems are established under weak assumptions.
	The improved regularity of the subproblems is studied by providing sufficient conditions for the existence of KKT multipliers.
    Numerical experiments, based on applications in truss topology design and an optimal control problem from aerothermodynamics, complement the theoretical analysis and comparison of the regularization methods.
	The computational results highlight the benefit of using  regularization  over applying a standard solver directly, and they allow us to identify two promising regularization schemes.
\end{abstract}




\section{Introduction}\label{sec:intro}

We consider \textit{mathematical programs with vanishing constraints (MPVC)}, which are constrained optimization problems of the form 
\begin{equation}\label{eq:MPVC}
    \begin{array}{ll}
       \min & f(x) \\
       \text{s.t.}    & g_i(x) \leq 0 \quad (i=1,\ldots,m),\\
       & h_i(x) = 0 \quad (i=1,\ldots,p),\\
       & H_i(x) \geq 0 \quad (i=1, \ldots, l), \\
       & G_i(x)H_i(x)\leq 0 \quad (i=1, \ldots, l).
    \end{array}
\end{equation} 
We assume throughout the paper that the functions $f,g_i,h_i,H_i,G_i:\R^n \to \R$ are continuously  differentiable.
MPVCs were introduced to the mathematical community in a seminal paper by Achtziger and Kanzow \cite{AcK08}, where they were extracted as a mathematical model for stress constraints in optimal topology design of mechanical structures.
Other applications of MPVCs comprise e.g. robot motion planning  and  mixed-integer nonlinear optimal control problems (MIOCPs) \cite{Jung2013}.
The theoretical foundations, i.e.  optimality, stationarity, criticality and  constraint qualifications have been established in the literature by Hoheisel et al. \cite{Hoh09, HoK09, HoK08, HoK07, HKO10} and other authors \cite{DSS12, IzP09}.
Numerical schemes based on smoothing and relaxation were studied  by  Hoheisel et al. \cite{AHK13, AHK12, HKS12local, HKS12new}.
Izmailov et al. \cite{IzS09}, and Dussault et al \cite{DHM19} also study regularization methods.
Izmailov et al. also study  Newton-type methods \cite{IzP09} for MPVCs.
The recent paper by Benko and Gfrerer \cite{BeG17} establishes an SQP method for MPVCs based on their very own {\em Q-stationarity}. 

In this paper we focus on regularization methods for the  solution of MPVCs, that depend on a single parameter $t>0$, since these have proven to be  simple and robust numerical approaches to  MPVCs  \cite{AHK13, AHK12, HKS12local, HKS12new,IzS09}.
The general idea of these methods is to consider a regularization  $X(t)\subset \R^n$ of the feasible set  of \eqref{eq:MPVC}  which is less degenerate in terms of constraint qualifications and existence of interior points, and such that $X(0)$ coincides with the original feasible set.
Given a sequence $\{t_k\}\downarrow 0$, the resulting numerical strategy is  to asymptotically approximate critical points of \eqref{eq:MPVC} via critical points of $f$ over $X(t_k)$.
The concrete regularization methods studied in this paper are:
\begin{itemize}
    \item the global regularization method \cite{AHK12, IzS09};
    \item the local regularization method \cite{HKS12local};
    \item the $L$-shaped regularization method \cite{HKS12new};
    \item the nonsmooth regularization method.
\end{itemize}
The latter has not been previously considered for MPVCs  but, like the others, cf. \cite{HKS13}, is  inspired by an analogous approach to {\em mathematical programs with complementarity constraints (MPCCs)}  due to Kadrani et al. \cite{KDB09}.    

This paper is the first systematic theoretical and numerical comparison of regularization methods for MPVCs.
For the regularization methods under consideration we present convergence results under weak constraint qualifications,   comprising both exact and inexact notions of stationarity.
Moreover, we study local regularity properties of the regularized feasible sets, which also establish the existence of KKT multipliers for the regularized problems and thus prove that the regularized problems are less degenerate.
A numerical comparison, illustrating the  benefit of regularization, is provided based on two instances from truss topology optimization  and an example from aerothermodynamics.

Inexact convergence result have previously not been available for MPVCs, and the exact results in the improved given form are, unless otherwise stated, also new.
The same holds for our study of the local regularity properties of the regularized feasible sets.
Numerical tests of regularization methods for MPVCs on truss topology problems are well established \cite{AHK13, AHK12, DHM19}.
After all this was the motivating instance of this  problem class.
The aerothermodynamics example is new and MPVCs have, to the best of our knowledge, never been used in this context before.

Finally, since the regularization schemes in question have their counterpart for MPCCs \cite{HKS13}, we shed light on the question whether the respective approaches have similar theoretical  and numerical properties as for MPCCs.

The organization of this paper is as follows:
In Section~\ref{sec:preliminaries} we provide some background material from nonlinear programming and MPVC theory, in particular different notions of stationarity and constraint qualifications. 
Exact and inexact convergence properties of four different  regularization methods are investigated in Section~\ref{sec:convergence}.
In Section \ref{sec:numerical}, we present a numerical comparison of the regularization methods studied on problems from truss topology design and aerothermodynamics.
We conclude with some final remarks in Section~\ref{sec:final}.

Most of the notation used is standard:
For a differentiable function $f:\R^n \to \R$, $\nabla f(x)\in \R^n$ denotes the gradient of $f$ at $x$, which is understood as a column vector.
For a vector $x\in\R^n$ we denote by
$
   \supp(x):=\big\{i\in \{1,\dots,n\} \mid x_i\neq 0 \big\}
$
the support of this vector.
Given a subset $I\subset \{1,\dots,n\}$, we use the abbreviation $x_I:=(x_i)_{i\in I}\in \R^{|I|}$. 
\section{Preliminaries}\label{sec:preliminaries}

In this section, we recall necessary background knowledge for both standard nonlinear programs and MPVCs, with a strong focus on optimality conditions and constraint qualifications.

\subsection{Constraint Qualifications for Standard Nonlinear Programs}\label{ssec:NLP}

The central idea behind a regularization method is to replace the difficult MPVC by a sequence of (hopefully simpler) standard nonlinear programs.
Thus, we begin by recalling some constraint qualifications for this problem class.
Consider the following \emph{nonlinear program (NLP)}
\begin{equation}\label{eq:NLP}
    \begin{array}{rcl}
         \min f(x) & \st & g_i(x) \leq 0 \quad (i=1,\ldots,m),\\
         && h_i(x)=0 \quad (i=1,\ldots,p)
    \end{array}
\end{equation}
and let $Z$ denote the set of feasible points of \eqref{eq:NLP}.
For an arbitrary $x^*\in Z$ we denote the \emph{set of active inequality constraints} by
$$
   I_g(x^*) := \{i \mid g_i(x^*) = 0\}.
$$
Furthermore, the \emph{(Bouligand) tangent cone} of $Z$ at $x^* \in Z$ is defined as
\begin{equation*}
    \T_Z (x^*) := \left\{ d \in \R^n \mid \exists \{ x^k\} \subset Z, \exists \{ \tau_k \} \geq 0 \text{ such that }
    x^k  \to x^* \text{ and } \tau_k (x^k - x^*) \to d \right\}, 
\end{equation*}
and the \emph{linearized cone} of $Z$ at $x^* \in Z$ is given by
\begin{equation*}
   \Li_Z (x^*) := \big\{ d \in \R^n \mid \nabla g_i(x^*)^T d \le 0 \; (i \in I_g(x^*)),
   \; \nabla h_i (x^*)^T d = 0 \; (i = 1, \ldots, p) \big\}.
\end{equation*}
Furthermore, the \emph{polar cone} to an arbitrary cone $\C \subseteq \R^n$ is defined as
$$
   \C^\circ := \{s \in \R^n \mid s^T d \leq 0 \;  \forall d \in \C \}.
$$

One of the constraint qualifications we are going to state uses positive linear independence of vectors.
We therefore first recall the definition thereof.

\begin{definition}\label{def:PositiveLinearDependence}
    A set of vectors
    $$
       a_i \; (i \in I_1) \text{ and } b_i \; (i \in I_2)
    $$
    is said to be \emph{positively linearly dependent} if there exist scalars 
    $\alpha_i \ (i \in I_1)$ and $\beta_i \ (i \in I_2) $, not all of them being 
    zero, with $\alpha_i \geq 0$ for all $i\in I_1$ and
    $$
       \sum_{i \in I_1}{\alpha_i a_i} + \sum_{i\in I_2}{\beta_i b_i} = 0.
    $$
    Otherwise, we say that these vectors are \emph{positively linearly independent}.
\end{definition}

With these definitions, we are now able to define \emph{constraint qualifications (CQ)} for NLPs.

\begin{definition}
    A feasible point $x^*$ for \eqref{eq:NLP} is said to satisfy the
    \begin{itemize}
       \item[(a)] \emph{linear independence CQ (LICQ)}, if the gradients
        $$
            \nabla g_i(x^*) \; (i \in I_g(x^*)), \quad \nabla h_i(x^*) \; (i=1, \ldots,p)
        $$
        are linearly independent;
      
        \item[(b)] \emph{Mangasarian-Fromovitz CQ (MFCQ)}, if the gradients
        $$
             \nabla g_i(x^*) \; (i \in I_g(x^*)) \text{ and } \nabla h_i(x^*) \; (i=1, \ldots,p)
        $$
        are positively linearly independent;
    
        \item[(c)] \emph{constant rank CQ (CRCQ)}, if there exists a neighborhood $ N(x^*) $ of $ x^* $, such that for all subsets $ I_1 \subseteq I_g(x^*) $ and $ I_2 \subseteq \{ 1, \ldots, p \} $, the gradient vectors
        $$
            \nabla g_i(x) \; (i \in I_1),  \quad \nabla h_i(x) \; (i \in I_2)
        $$
        have constant rank for all  $ x \in N(x^*) $ (which depends on $I_1, I_2$);
        
        \item[(d)] \emph{constant positive linear dependence CQ} (CPLD), if there exists a neighborhood $ N(x^*) $ of $ x^* $, such that for any subsets $ I_1 \subseteq I_g(x^*) $ and $ I_2 \subseteq \{ 1, \ldots, p \} $, for which the gradients
        \begin{equation*}
            \nabla g_i(x) \; (i \in I_1) \text{ and }
            \nabla h_i(x) \; (i \in I_2) 
        \end{equation*}
        are positively linearly dependent in $x^*$, they remain linearly dependent on $N(x^*) $;
        
        \item[(e)] \emph{Abadie CQ (ACQ)} if $ \T_Z(x^*) = \Li_Z(x^*) $;
        
        \item[(f)] \emph{Guignard CQ (GCQ)} if $ \T_Z(x^*)^\circ = \Li_Z(x^*)^\circ $.
    \end{itemize}
\end{definition}

The following relations hold between these constraint qualifications:
\begin{center}
    \begin{tikzpicture}[> = stealth]
        \node at (0,0) (licq) {LICQ};
		\node at (2,0.5) (mfcq)  {MFCQ};
		\node at (2,-0.5) (crcq)  {CRCQ};
		\node at (4,0) (cpld)  {CPLD};
		\node at (6,0) (acq)  {ACQ};
		\node at (8,0) (gcq) {GCQ};

		\draw[->, double distance=1.5pt, thick] (licq) -- (mfcq);
		\draw[->, double distance=1.5pt, thick] (licq) -- (crcq);
		\draw[->, double distance=1.5pt, thick] (mfcq) -- (cpld);
		\draw[->, double distance=1.5pt, thick] (crcq) -- (cpld);
		\draw[->, double distance=1.5pt, thick] (cpld) -- (acq);
		\draw[->, double distance=1.5pt, thick] (acq) -- (gcq);
    \end{tikzpicture}
\end{center}
It was proven in \cite{AMS05} that CPLD implies ACQ.
All other implications follow directly from the definitions.
It is well known that for  every local minimizer $x^*$ of \eqref{eq:NLP}, in which GCQ holds, there exist multipliers $\lambda \in \R^m $ and $\mu \in \R^p$ such that 
$$
   0 = \nabla f(x^*) + \sum_{i=1}^m{\lambda_i \nabla g_i(x^*)} + \sum_{i=1}^p{\mu_i \nabla h_i(x^*)}
$$
with $\supp(\lambda) \subseteq I_g(x^*)$ and $ \lambda \ge 0$.
In this situation, by slight abuse of terminology, we  will refer to both $x^*$ and the triple $ (x^*, \lambda, \mu) $ as  a \emph{KKT point} of  \eqref{eq:NLP}.

From a computational perspective, one cannot expect to obtain exact KKT points of a given NLP.
Hence, the following generalized notion will play a role in our analysis.

\begin{definition}
    Let $x^{*} \in \R^n$ and $\eps \geq 0$.
    If there exist $\lambda \in \R^m$ and $\mu \in \R^p$ such that 
    $$
        \left\| \nabla f(x^{*}) + \sum_{i=1}^m \lambda_i\nabla g_i(x^*) + \sum_{i=1}^p \mu_i \nabla  h_i(x^{*}) \right\|_\infty \leq \eps,
    $$
    with $g_i(x^{*}) \leq \eps$, $\lambda_i \geq -\eps$, $|g_i(x^{*})\lambda_i| \leq \eps$ and $|h_i(x^{*})| \leq \eps$ (for all $i = 1, \ldots, m$ and $i = 1,\ldots, p$, respectively), then $x^{*}$ is called an \emph{$\eps$-stationary point} of \eqref{eq:NLP}.
\end{definition}

\subsection{Stationary Points for MPVCs}\label{ssec:MPVCstationary}

While the KKT conditions are the single most important necessary optimality criterion for NLPs, there are several stationarity concepts in use when it comes to MPVCs.
The reason for this is that MPVCs violate NLP constraint qualifications in many important and relevant situations, see e.g. \cite{Hoh09}.
They thus require tailored optimality conditions and CQs.

In order to state MPVC-tailored stationarity conditions, we need the following index sets:
Let $x^*$ be an arbitrary feasible point of \eqref{eq:MPVC} and  $ I_g = \{ i \ | \ g_i(x^*) = 0 \} $ be defined as before.
Additional index sets corresponding to the vanishing constraints are defined as
\begin{equation*}
   I_+  :=  \displaystyle{\big\{ i \, \big| \, H_i(x^*) > 0 \big\}},\quad 
   I_0  :=  \displaystyle{\big\{ i \, \big| \, H_i(x^*) = 0 \big\}.}  
\end{equation*}
Furthermore, we divide the index set $ I_+ $ into the following subsets:
\begin{eqnarray*}
   I_{+0} & := & \displaystyle{\big\{ i \, \big| \, H_i(x^*) > 0, G_i(x^*) 
   = 0 \big\},} \\
   I_{+-} & := & \displaystyle{\big\{ i \, \big| \, H_i(x^*) > 0, G_i(x^*) 
   < 0 \big\}.}
\end{eqnarray*}
Similarly, we partition the set $ I_0 $ in the following way:
\begin{eqnarray*}
   I_{0+} & := & \displaystyle{\big\{ i \, \big| \, H_i(x^*) = 0, 
   G_i(x^*) > 0 \big\},} \\
   I_{00} & := & \displaystyle{\big\{ i \, \big| \, H_i(x^*) = 0, 
   G_i(x^*) = 0 \big\},} \\
   I_{0-} & := & \displaystyle{\big\{ i \, \big| \, H_i(x^*) = 0, 
   G_i(x^*) < 0 \big\}.}
\end{eqnarray*}
Note that the first subscript indicates the sign of $ H_i(x^*) $, whereas the second subscript stands for the sign of $ G_i(x^*) $.
We would also like to point out that the above index sets substantially depend on the chosen point $x^*$.
Throughout this section, it will always be clear from the context which point these index sets refer to. 

\begin{definition}\label{def:weakstat}
    Let $x^*$ be feasible for the MPVC \eqref{eq:MPVC}.
    Then $x^*$ is called 
    \begin{itemize}
       \item[(a)] \emph{weakly stationary}, if there exist multipliers  $ \lambda \in \R^m, \mu \in \R^p, \eta^H, \eta^G \in \R^l $ such that
        \begin{eqnarray*}
            && \nabla f(x^*) + \sum_{i=1}^m \lambda_i \nabla g_i(x^*) +
            \sum_{i=1}^p \mu_i \nabla h_i(x^*) - \sum_{i=1}^l \eta_i^H
            \nabla H_i(x^*) + \sum_{i=1}^l \eta_i^G \nabla G_i(x^*) = 0,  \\        
            && \lambda_i \ge 0 \; (i \in I_g),  \quad 
            \lambda_i = 0 \ (i \notin I_g), \\
            && \eta_i^H = 0 \; (i \in I_+), \quad
            \eta_i^H \ge 0 \;  (i \in I_{0-}), \quad
            \eta_i^H \text{ free }  (i \in I_{0+}\cup I_{00}), \\
            && \eta_i^G = 0 \, (i \in I_{+-} \cup I_{0-}\cup I_{0+}), \quad
            \eta_i^G \ge 0 \, (i \in I_{+0}\cup I_{00} );
        \end{eqnarray*}
        
        \item[(b)] \emph{T-stationary} if   $ x^* $ is weakly stationary and  
          $\eta_i^G \eta_i^H \leq 0  $ for all $i \in I_{00}$;
          
        \item[(c)] \emph{M-stationary} if $ x^* $ is weakly stationary and 
          $\eta_i^G \eta_i^H = 0  $ for all $i \in I_{00}$;
          
        \item[(d)] \emph{S-stationary} if $x^*$ is weakly stationary and 
          $\eta_i^H \geq0, \eta_i^G=0$ for all $i \in I_{00}$.
    \end{itemize}
\end{definition}

By slight abuse of terminology, if $x^*$ is a weakly/T-/M-S-stationary point with the multipliers  $(\lambda,\mu,\eta^G,\eta^H)$, then we also call the whole quintuple $(x^*,\lambda,\mu,\eta^G,\eta^H)$ weakly/T-/M-S-stationary.
Obviously, the following implications hold for these stationarity concepts:
\begin{center}
    \begin{tikzpicture}[> = stealth]
        \node at (0,0) (s-stat) {S-stationarity};
		\node at (3.5,0) (m-stat)  {M-stationarity};
		\node at (7,0) (t-stat)  {T-stationarity};
		\node at (10.5,0) (w-stat)  {weak stationarity};

		\draw[->, double distance=1.5pt, thick] (s-stat) -- (m-stat);
		\draw[->, double distance=1.5pt, thick] (m-stat) -- (t-stat);
		\draw[->, double distance=1.5pt, thick] (t-stat) -- (w-stat);
    \end{tikzpicture}
\end{center}
The only difference between these four stationarity concepts lies in the conditions on the multipliers corresponding to the bi-active set $ I_{00} $. 
These conditions are illustrated in Figure \ref{fig:statconcepts}.
Hence, if the bi-active set is empty, all four stationary concepts coincide. 
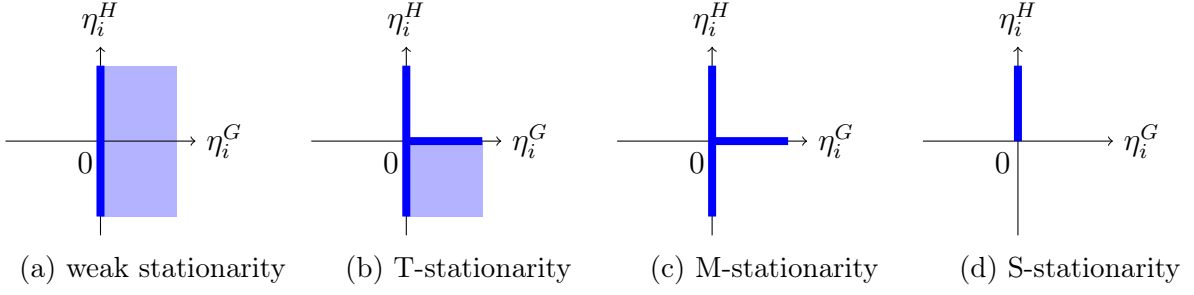
\begin{figure}
    \centering
    \begin{subfigure}{0.24\textwidth}
        \begin{tikzpicture}
            \fill[color = blue!30] (0, -1) -- (1,-1) -- (1, 1) -- (0, 1) -- cycle;
            \draw[-] (-1.25,0) -- (-0.2,0) node[below] {$0$} -- (0,0);
     	    \draw[->] (0,0) -- (1.25,0) node[right]  {$\eta^G_i$};
     	    \draw[->] (0,-1.25)  -- (0,1.25)  node[above]  {$\eta^H_i$};
     	    \draw[color = blue, line width = 3pt] (0, 1) -- (0, -1);
        \end{tikzpicture}
        \caption{weak stationarity}
    \end{subfigure}
    \begin{subfigure}{0.24\textwidth}
        \begin{tikzpicture}
         	\fill[color = blue!30] (0, -1) -- (1,-1) -- (1, 0) -- (0, 0) -- cycle;
            \draw[-] (-1.25,0) -- (-0.2,0) node[below] {$0$} -- (0,0);
         	\draw[->] (0,0) -- (1.25,0) node[right]  {$\eta^G_i$};
         	\draw[->] (0,-1.25)  -- (0,1.25)  node[above]  {$\eta^H_i$};
         	\draw[color = blue, line width = 3pt] (0, 1) -- (0, -1);
         	\draw[color = blue, line width = 3pt] (0, 0) -- (1, 0);
        \end{tikzpicture}
        \caption{T-stationarity}
    \end{subfigure}
    \begin{subfigure}{0.24\textwidth}
        \begin{tikzpicture}
           \draw[-] (-1.25,0) -- (-0.2,0) node[below] {$0$} -- (0,0);
            \draw[->] (0,0) -- (1.25,0) node[right]  {$\eta^G_i$};
            \draw[->] (0,-1.25)  -- (0,1.25)  node[above]  {$\eta^H_i$};
            \draw[color = blue, line width = 3pt] (0, 1) -- (0, -1);
            \draw[color = blue, line width = 3pt] (0, 0) -- (1, 0);
        \end{tikzpicture}
        \caption{M-stationarity}
    \end{subfigure}
    \begin{subfigure}{0.24\textwidth}
        \begin{tikzpicture}
            \draw[-] (-1.25,0) -- (-0.2,0) node[below] {$0$} -- (0,0);
            \draw[->] (0,0) -- (1.25,0) node[right]  {$\eta^G_i$};
            \draw[->] (0,-1.25)  -- (0,1.25)  node[above]  {$\eta^H_i$};
            \draw[color = blue, line width = 3pt] (0, 0) -- (0, 1);
        \end{tikzpicture}
        \caption{S-stationarity}
    \end{subfigure}
    \caption{Geometric illustration of weak, T-, M-, and S-stationarity for an 
    index $i \in I_{00}$}\label{fig:statconcepts}
\end{figure}

The notion of weak stationarity for MPVCs was introduced in \cite{IzS09}, whereas M-statio\-narity for MPVCs is due to \cite{HoK08} and S-stationarity, which is equivalent to the KKT conditions of \eqref{eq:MPVC}, was first mentioned in \cite{AcK08}.
T-stationarity was coined in \cite{DSS12}.
In an MPCC setting, the counterpart of T-stationarity is usually called C-stationarity, cf.\ \cite{ScS00}.

\subsection{MPVC-tailored Constraint Qualifications}

As was pointed out above,  most standard constraint qualifications are violated by the vanishing constraints.
For this reason, a myriad of MPVC-tailored constraint qualifications have been developed in the past, see e.g. \cite{Hoh09}.
To keep our presentation unified and compact, we confine ourselves to the ones most useful to our study.

\begin{definition}\label{def:cqs}
    A feasible point $x^*$ of the MPVC \eqref{eq:MPVC} is said to satisfy 
    \begin{itemize}
        \item[(a)] \emph{MPVC-linear independence CQ (MPVC-LICQ)}, if the gradients
        \begin{equation*}
            \nabla g_i(x^*) \, (i \in I_g), \;
            \nabla h_i(x^*) \, (i = 1, \ldots, p), \; 
            \nabla G_i(x^*) \, (i \in I_{00} \cup I_{+0}), \;
            \nabla H_i(x^*) \, (i \in I_{0})
        \end{equation*}
        are linearly independent;
          
        \item[(b)] \emph{MPVC-Mangasarian-Fromovitz CQ (MPVC-MFCQ)}, if the gradients 
        \begin{eqnarray*}
           \nabla g_i(x^*)  \, (i \in I_g), \;
            (-\nabla H_i(x^*))  \, (i \in I_{0-}), \;
            \nabla G_i(x^*)  \, (i \in I_{+0} \cup I_{00}) & \text{and}&  \\
            \nabla h_i(x^*) \, (i=1,\dots,p ), \;
            \nabla H_i(x^*) \, (i \in I_{0+} \cup I_{00}) &&
        \end{eqnarray*}
        are positively linearly independent;
        
       \item[(c)] \emph{MPVC-constant rank CQ (MPVC-CRCQ)}, if there is a neighborhood $N(x^*)$ of $x^*$ such that for all subsets $I_1 \subseteq I_g$, $I_2 \subseteq \{1,\ldots,p\}$, $I_3 \subseteq I_{+0}\cup I_{00}$, $I_4 \subseteq I_{0}$, the gradients
       \begin{equation*}
            \nabla g_i(x) \, (i \in I_1), \;
            \nabla h_i(x) \, (i \in I_2), \; 
            \nabla G_i(x) \, (i \in I_3), \;
            \nabla H_i(x) \, (i \in I_4)
        \end{equation*}
       have  constant rank for all $x \in N(x^*)$;
       
       \item[(d)] \emph{MPVC-constant positive linear dependence CQ (MPVC-CPLD)}, if there is a neighborhood $N(x^*)$ of $x^*$ such that for all subsets $ I_1 \subseteq I_g$, $I_2\subseteq I_{0-}$, $I_3\subseteq I_{+0}\cup I_{00}$, $ I_4 \subseteq \{ 1, \ldots, p \} $, $I_5\subseteq I_{0+}\cup I_{00}$, for which the gradients
       \begin{eqnarray*}
           \nabla g_i(x)  \, (i \in I_1), \;
            (-\nabla H_i(x))  \, (i \in I_2), \;
            \nabla G_i(x)  \, (i \in I_3) & \text{and}&  \\
            \nabla h_i(x) \, (i \in I_4 ), \;
            \nabla H_i(x) \, (i \in I_5) &&
        \end{eqnarray*}
        are positively linearly dependent in $x^*$, they remain linearly dependent for all $x \in N(x^*)$.
    \end{itemize}
\end{definition}

In the definition of MPVC-MFCQ and MPVC-CPLD,  we use the word "and" to separate the gradients, for which there are sign constraints in the definition of positive linear dependence, from those without sign constraints.

Apart from those defined above, there exist a number of constraint qualifications tailored to MPVCs such as MPVC-ACQ, a variant of the standard ACQ.
Some of the relations between these constraint qualifications are displayed in the diagram below, see \cite{HKS12local} and the references therein for more information about these constraint qualifications.
\begin{center}
    \begin{tikzpicture}[> = stealth]
        \node at (0,0) (licq) {MPVC-LICQ};
		\node at (4,0.5) (mfcq)  {MPVC-MFCQ};
		\node at (4,-0.5) (crcq)  {MPVC-CRCQ};
		\node at (8,0) (cpld)  {MPVC-CPLD};
		\node at (12,0) (acq)  {MPVC-ACQ};

		\draw[->, double distance=1.5pt, thick] (licq) -- (mfcq);
		\draw[->, double distance=1.5pt, thick] (licq) -- (crcq);
		\draw[->, double distance=1.5pt, thick] (mfcq) -- (cpld);
		\draw[->, double distance=1.5pt, thick] (crcq) -- (cpld);
		\draw[->, double distance=1.5pt, thick] (cpld) -- (acq);
    \end{tikzpicture}
\end{center}
Analogous to the standard case, MPVC-LICQ is the strongest constraint qualification of the five mentioned here and MPVC-ACQ is the weakest.
MPVC-CPLD  relaxes both MPVC-MFCQ and MPVC-CRCQ, whereas it is known that neither MPVC-MFCQ implies MPVC-CRCQ nor vice versa.

\section{Convergence Properties of Regularization Schemes}\label{sec:convergence}

In this section, we discuss the theoretical properties of the four regularization schemes from \cite{Sch01, StU10, KaS13, KDB09} in detail.
All of them were originally introduced for MPCCs and have since been adapted to MPVCs in \cite{AHK12, IzS09, HKS12local, HKS12new}.
The only exception is the regularization scheme from \cite{KDB09}, which is discussed in the context of MPVCs for the first time in this paper.

In previous analysis of these regularization methods, it is usually assumed that one is able to compute an exact KKT point of the regularized NLPs in every iteration.
However, from a numerical point of view, this is not a realistic assumption.
Furthermore, for MPCCs an in-depth analysis \cite{KaS15} revealed that computing only $\eps$-stationary points of the regularized problems has serious effects on the convergence properties of some of these schemes. 

For this reason, we develop convergence results based on $\eps$-stationary points of the regularized problems and compare them to the -- partially preexisting -- convergence results based on exact KKT points. \footnote{Here, for all convergence results, the point $x^*$ in question is always feasible for the underlying MPVC, so that the imposed  constraint qualifications at $x^*$ are well-defined.}

Furthermore, we provide conditions on the MPVC under which the regularized problems satisfy a standard CQ locally.

For clarity and brevity of notation we omit standard inequality and equality constraints in the following proofs.

\subsection{The Global Regularization}\label{ssec:globalRegularization}

The first regularization method we present has been studied in \cite{AHK12, IzS09} and is similar to the method for MPCCs proposed by Scholtes in his seminal paper \cite{Sch01}.
For a given regularization parameter $t> 0$ the regularized problem reads

\begin{minipage}[c]{0.5\textwidth}
    \begin{align} 
        \min \ & f(x) \nonumber\\ 
        \st \ & g_i(x) \leq 0 \  (i = 1,\ldots,m), \nonumber  \\ 
         & h_j(x) = 0 \ (j = 1,\ldots,p), \tag{$R^S(t)$} \nonumber \\ 
         & H_i(x) \geq 0 \ (i = 1,\ldots,l),  \nonumber \\ 
         & G_i(x) H_i(x) \leq t \  (i = 1,\ldots,l).  \nonumber 
    \end{align}
\end{minipage}
\qquad
\begin{minipage}[c]{0.45\textwidth}
    \begin{tikzpicture}[scale = 1.25]
        \fill[blue!30] (-1.5,0) -- (0,0) -- (0,2) -- (-1.5,2) -- cycle;
        \fill[blue!30] (0.25,2) -- plot[domain=0.25:2] (\x,{0.5/\x}) -- (2,0.25) -- (2,0) -- (0,0) -- (0,2) -- cycle;
        \draw[->] (-1.75,0) -- ({sqrt(0.5)},0) node[below]{$\sqrt{t}$} -- (2.5,0) node[below]{$G_i(x)$};
        \draw[->] (0,-0.5) -- (0,{sqrt(0.5)}) node[left]{$\sqrt{t}$} -- (0,2.5) node[left]{$H_i(x)$};
        \draw[, line width = 1.5pt, blue] (-1.5,0) -- (0,0) -- (2,0);
        \draw[, line width = 1.5pt, blue] plot[domain=0.25:2] (\x,{0.5/\x});
        \draw[dashed] ({sqrt(0.5)},0) -- ({sqrt(0.5)},{sqrt(0.5)}) -- (0,{sqrt(0.5)});
        \draw[->] (0.5,1.75) node[right]{$G_i(x)H_i(x)=t$} -- (1/3,3/2);
    \end{tikzpicture}
\end{minipage}

We denote the feasible set of the regularized program $R^S(t)$ by $X^S(t)$.
Due to the structure of the resulting regularized feasible set, we call this approach the \emph{global regularization}.
For a given $t>0$ and $x\in X^S(t)$, we define the index sets
\begin{eqnarray*}
   I_g(x)&:=& \{i \mid g_i(x) = 0\},\\
   I_H(x)&:=&\{i\mid H_i(x)=0\},\\
   I_{GH}(x,t)&:=&\{i\mid G_i(x)H_i(x)=t\}.
\end{eqnarray*}

Then we obtain the following convergence result based on $\eps$-stationary points of the regularized problems.

\begin{theorem}\label{thm:globalConvergenceEpsMFCQ}
    Let $\{t_k\} \downarrow 0$, $\{\eps_k\} \downarrow 0$ with $\eps_k = O(t_k)$ and $\{x^k\}$ a sequence of $\eps_k$-stationary points of $R^S(t_k)$ with $x^k \to x^*$.
    If MPVC-MFCQ holds in $x^*$, then $x^*$ is a T-stationary point of the MPVC \eqref{eq:MPVC}.
\end{theorem}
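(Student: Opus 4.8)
The plan is to translate the $\eps_k$-stationarity of the regularized problems into an approximate weak-stationarity condition for the MPVC by a change of multipliers, and then pass to the limit. Dropping the $g$- and $h$-terms as permitted, $\eps_k$-stationarity of $x^k$ for $R^S(t_k)$ supplies multipliers $\nu_i^k \ge -\eps_k$ (for $H_i \ge 0$) and $\gamma_i^k \ge -\eps_k$ (for $G_iH_i \le t_k$) with
\[
\left\| \nabla f(x^k) - \sum_{i=1}^l \nu_i^k \nabla H_i(x^k) + \sum_{i=1}^l \gamma_i^k\big(H_i(x^k)\nabla G_i(x^k) + G_i(x^k)\nabla H_i(x^k)\big) \right\|_\infty \le \eps_k,
\]
together with the approximate feasibility and complementarity estimates $H_i(x^k) \ge -\eps_k$, $G_i(x^k)H_i(x^k) \le t_k + \eps_k$, $|H_i(x^k)\nu_i^k| \le \eps_k$, and $|(G_i(x^k)H_i(x^k) - t_k)\gamma_i^k| \le \eps_k$. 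Setting $\eta_i^{H,k} := \nu_i^k - \gamma_i^k G_i(x^k)$ and $\eta_i^{G,k} := \gamma_i^k H_i(x^k)$ rewrites the displayed condition as $\|\nabla f(x^k) - \sum_i \eta_i^{H,k}\nabla H_i(x^k) + \sum_i \eta_i^{G,k}\nabla G_i(x^k)\|_\infty \le \eps_k$, i.e.\ exactly the weak-stationarity equation up to the residual $\eps_k$.

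Next I would establish that $\{\eta^{H,k}\}$ and $\{\eta^{G,k}\}$ are bounded using MPVC-MFCQ. Suppose not; normalizing by $M_k := \max\{\|\eta^{H,k}\|,\|\eta^{G,k}\|\} \to \infty$ and passing to a subsequence yields limits $a_i, b_i$, not all zero, with $-\sum_i a_i\nabla H_i(x^*) + \sum_i b_i\nabla G_i(x^*) = 0$ (the $\nabla f$ and residual terms vanish after division). A routine index-set case analysis — using the complementarity and feasibility estimates above together with $\eps_k = O(t_k)$ — shows $a_i = 0$ on $I_+$, $b_i = 0$ on $I_{+-}\cup I_{0-}\cup I_{0+}$, $a_i \ge 0$ on $I_{0-}$, and $b_i \ge 0$ on $I_{+0}\cup I_{00}$. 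The point is that the bare multiplier $\gamma_i^k$ may blow up, but $\gamma_i^k G_i$ and $\gamma_i^k H_i$ stay controlled, e.g.\ via $\gamma_i^k G_i(x^k) = \eta_i^{G,k}\,G_i(x^k)/H_i(x^k)$ with $G_i/H_i \to 0$ on $I_{+0}$. This is precisely the sign pattern forbidden by the positive linear independence asserted in MPVC-MFCQ, a contradiction. Hence the multiplier sequences are bounded, a convergent subsequence produces $(\eta^H,\eta^G)$, and letting $k \to \infty$ in the rewritten equation gives the weak-stationarity equation exactly, while the same case analysis delivers all the weak-stationarity sign conditions.

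The crux is the extra condition $\eta_i^G\eta_i^H \le 0$ on the bi-active set $I_{00}$, which is where the hypothesis $\eps_k = O(t_k)$ does the real work. Suppressing the argument $x^k$ and expanding directly,
\[
\eta_i^{G,k}\eta_i^{H,k} = \gamma_i^k\nu_i^k H_i - (\gamma_i^k)^2 G_iH_i,
\]
and inserting $|\nu_i^k H_i| \le \eps_k$ together with $(\gamma_i^k)^2 G_iH_i = (\gamma_i^k)^2 t_k + \gamma_i^k\big(\gamma_i^k(G_iH_i - t_k)\big) \ge (\gamma_i^k)^2 t_k - |\gamma_i^k|\eps_k$ yields
\[
\eta_i^{G,k}\eta_i^{H,k} \le -(\gamma_i^k)^2 t_k + 2\eps_k|\gamma_i^k|.
\]
With $\eps_k \le C t_k$ the right-hand side equals $t_k|\gamma_i^k|(2C - |\gamma_i^k|)$, which is $\le 0$ whenever $|\gamma_i^k| \ge 2C$ and otherwise bounded by $4C^2 t_k \to 0$; in either case $\limsup_k \eta_i^{G,k}\eta_i^{H,k} \le 0$, so the limit satisfies $\eta_i^G\eta_i^H \le 0$. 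The remaining sign $\eta_i^G \ge 0$ on $I_{00}$ follows from $\eta_i^{G,k} = \gamma_i^k H_i \ge -\eps_k^2 - \eps_k(H_i + \gamma_i^k)$ (using $\gamma_i^k, H_i \ge -\eps_k$), once one notes that the inequality above forces $t_k(\gamma_i^k)^2$ to stay bounded, whence $\eps_k\gamma_i^k = O(\sqrt{t_k}) \to 0$. Combining these facts, $x^*$ is T-stationary.

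The main obstacle is exactly this interplay on $I_{00}$: since $\gamma_i^k$ itself need not be bounded while $\gamma_i^k H_i$ and $\nu_i^k - \gamma_i^k G_i$ are, every estimate must be phrased in terms of the products, and the cross term $2\eps_k|\gamma_i^k|$ can only be absorbed into $-(\gamma_i^k)^2 t_k$ because of the coupling $\eps_k = O(t_k)$. Dropping that coupling would leave $\eta_i^G\eta_i^H$ with indefinite sign, so this is also the step that explains why $\eps_k = O(t_k)$, rather than merely $\eps_k \downarrow 0$, is assumed.
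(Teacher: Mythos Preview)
Your proof is correct and follows the same global outline as the paper (rewrite the $\eps_k$-stationarity via new multipliers $\eta^{G,k},\eta^{H,k}$, use MPVC-MFCQ to get boundedness, pass to the limit, then squeeze out the T-stationarity sign on $I_{00}$), but the execution is genuinely different in two places. First, you define $\eta_i^{G,k}=\gamma_i^k H_i(x^k)$ and $\eta_i^{H,k}=\nu_i^k-\gamma_i^k G_i(x^k)$ uniformly for all $i$, so the rewritten stationarity condition has no residual terms and you only need boundedness of $(\eta^{G,k},\eta^{H,k})$; the paper instead defines these multipliers piecewise on the limit index sets, carries leftover terms $\delta_i^k G_i\nabla H_i$ and $\delta_i^k H_i\nabla G_i$ on $I_{+0}\cup I_{+-}$ resp.\ $I_{0\pm}\cup I_{+-}$, and therefore has to prove boundedness of the enlarged vector $(\eta^{G,k},\eta^{H,k},\delta_I^k)$ and rule out $\bar\delta_I\neq 0$ via several additional contradictions. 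Your route is shorter. Second, your T-stationarity step is a clean quantitative inequality $\eta_i^{G,k}\eta_i^{H,k}\le -(\gamma_i^k)^2 t_k+2\eps_k|\gamma_i^k|$, which both gives $\limsup\eta_i^{G,k}\eta_i^{H,k}\le 0$ directly and, by feeding back boundedness of the left side, yields $t_k(\gamma_i^k)^2$ bounded and hence $\eps_k\gamma_i^k\to 0$ for the $\eta_i^G\ge 0$ sign; the paper obtains the same conclusions by a case split on whether $\nu_i^k\to 0$ and repeated appeal to an auxiliary estimate \eqref{eq:deltaUnboundedScholtes}. Your argument makes the role of $\eps_k=O(t_k)$ more transparent.

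One point to be aware of: in your boundedness step you need $b_i\ge 0$ on $I_{00}$ \emph{before} you know the $\eta$'s are bounded, so your later argument (which uses that $\eta_i^{G,k}\eta_i^{H,k}$ is bounded) is not yet available. The claim is still true and does follow from the feasibility/complementarity estimates together with $\eps_k=O(t_k)$, as you indicate; concretely, on the dangerous subsequence where $H_i(x^k)<0$ one has $|H_i(x^k)|\le\eps_k$ and $|G_i(x^k)|$ small, so $|G_iH_i-t_k|\ge t_k/2$ eventually and hence $|\gamma_i^k|\le 2\eps_k/t_k$ stays bounded, forcing $\eta_i^{G,k}/M_k\to 0$. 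This is exactly the mechanism the paper spells out at that spot, so your ``routine case analysis'' is accurate but not entirely routine.
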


\begin{proof}
    Since $x^k$ is an $\eps_k$-stationary point of $R^S(t_k)$, there exist multipliers $\nu^k, \delta^k \in \R^l$ such that
    \begin{eqnarray}
        && \left\|\nabla f(x^k) - \sum_{i=1}^l \nu_i^k \nabla H_i(x^k) + \sum_{i=1}^l \big[ \delta_i^k  G_i(x^k)\nabla H_i(x^k) +  \delta_i^k H_i(x^k) \nabla G_i(x^k) \big] \right\|_\infty \leq \eps_k, \label{eq:InexactScholtes} \\
        && \nu_i^k \geq -\eps_k,\quad H_i(x^k) \geq -\eps_k,\quad  |\nu^k_i H_i(x^k)| \leq \eps_k \quad (i=1,\dots,l), \label{eq:InexactSch1} \\
        && \delta_i^k\geq -\eps_k,\quad  G_i(x^k)H_i(x^k) - t_k \leq \eps_k, \quad  |\delta_i^k(G_i(x^k)H_i(x^k) - t_k)| \leq \varepsilon_k\quad (i=1,\dots,l). \label{eq:InexactSch2}
    \end{eqnarray}

    First of all, this implies that the limit $x^*$ is feasible for the MPVC \eqref{eq:MPVC}.
    Furthermore, we can draw some conclusions from this regarding the multipliers:
    For all $i \in I_{+}$ using $H_i(x^*) > 0$ the conditions in \eqref{eq:InexactSch1} imply $\nu_i^k = 0$  for  all $k$ large and thus $\lim_{k \to \infty} \nu_i^k = 0$.
    For all $i \in I_0$ we know at least $\liminf_{k \to \infty} \nu_i^k \geq 0$.

    For all $i \in I_{+-}$ using $G_i(x^*)H_i(x^*) < 0$ together with \eqref{eq:InexactSch2} implies $\lim_{k \to \infty} \delta_i^k = 0$.
    For all $i \notin I_{+-}$ we obtain $\liminf_{k \to \infty} \delta_i^k \geq 0$.

    A case needed frequently later on will be $|\delta^k_i| \to \infty$.
    Based on the previous discussion, this implies $i \notin I_{+-}$ as well as $\delta^k_i \to \infty$.
    In case $\eps_k > 0$ for all $k$ large, we can infer from \eqref{eq:InexactSch2} that
    \begin{equation*}
        \tfrac{1}{\eps_k} |G_i(x^k) H_i(x^k) -t_k| \leq \tfrac{1}{\delta_k} \to 0.
    \end{equation*}
    Then using $\eps_k = O(t_k)$ yields
    \begin{equation}\label{eq:deltaUnboundedScholtes}
        \tfrac{1}{\eps_k} G_i(x^k) H_i(x^k) \not\to 0.
    \end{equation}

    Now define $\eta^{G,k}, \eta^{H,k} \in \R^l$ component-wise as
    \begin{equation*}
        \eta^{G,k}_i := 
        \begin{cases}
           \delta_i^k H_i(x^k), & i \in I_{00} \cup I_{+0}, \\
           0, & i \in I_{0+} \cup I_{0-} \cup I_{+-},
        \end{cases}
        \quad\text{and}\quad
        \eta_i^{H,k} := 
        \begin{cases}
           \nu^k_i - \delta^k_i G_i(x^k), & i \in I_{0}, \\
           \nu^k_i, & i \in I_{+}.
        \end{cases}
    \end{equation*}
    Then we can rewrite (\ref{eq:InexactScholtes}) as
    \begin{align}
        \begin{split}\label{eq:InexactScholtes2}
            \Big\|\nabla f(x^k) & - \sum_{i=1}^l \eta^{H,k}_i \nabla H_i(x^k) + \sum_{i \in I_{+-} \cup I_{+0}}(\delta_i^k G_i(x^k))\nabla H_i(x^k) \\
            &+ \sum_{i=1}^l \eta^{G,k}_i \nabla G_i(x^k) + \sum_{i \in I_{0+} \cup I_{0-} \cup I_{+-}}(\delta_i^k H_i(x^k))\nabla G_i(x^k)\Big\|_{\infty} \leq \eps_k. 
        \end{split}
    \end{align}

    Define $I := I_{+-} \cup I_{+0} \cup I_{0+} \cup I_{0-}$.
    To prove that the sequence $\{(\eta^{G,k},\eta^{H,k}, \delta_I^k)\}$ is bounded, assume $\|(\eta^{G,k},\eta^{H,k}, \delta_I^k)\| \to \infty$ and w.l.o.g that the complete normalized sequences converges:
    \begin{equation*} 
        \frac{(\eta^{G,k},\eta^{H,k}, \delta_I^k)}{\|(\eta^{G,k},\eta^{H,k}, \delta_I^k)\|} \to (\bar \eta^{G}, \bar \eta^{H}, \bar \delta_I) \neq 0 \quad \text{for } k \to \infty.
    \end{equation*}
    Then, dividing  \eqref{eq:InexactScholtes2}  by $\|(\eta^{G,k},\eta^{H,k}, \delta_I^k)\| $ and passing to the limit we arrive at
   \begin{equation}\label{eq:PosLinDep}
        0 = \sum_{i=1}^l \bar \eta^{H}_i \nabla H_i(x^*)  + \sum_{i=1}^l \bar \eta^{G}_i \nabla G_i(x^*).
   \end{equation}
   Here, we used $G_i(x^k) \to 0$ for $i \in I_{+0}$,   $H_i(x^k) \to 0$ for $i \in I_{0+}\cup I_{0-}$ and $\delta_i^k\to 0$ for $i\in I_{+-}$.
   
   By definition $\supp(\bar \eta^G) \subseteq I_{00} \cup I_{+0}$ and $\supp(\bar \eta^H) \subseteq I_{00} \cup I_{0+} \cup I_{0-}$ because  $\nu^k_i \to 0$ for $i \in I_{+}$.
   To be able to apply MPVC-MFCQ, it remains to verify $\bar \eta^G_i \geq 0$ for all $i \in I_{+0} \cup I_{00}$ and $\bar \eta^H_i \geq 0$ for all $i \in I_{0-}$.
   For all $k$ sufficiently large, we have the following implications
   \begin{eqnarray*}
       i \in I_{0-} & \Longrightarrow & \eta^{H,k}_i = \nu^k_i + \delta^k_i (-G_i(x^k)) \geq -\eps_k (1 - G_i(x^k)) \to 0  \quad  \Longrightarrow \quad \bar \eta^H_i \geq 0, \\
       i \in I_{+0} & \Longrightarrow & \eta^{G,k}_i = \delta^k_i H_i(x^k) \geq -\eps_k H_i(x^k) \to 0 \quad  \Longrightarrow \quad \bar \eta^G_i \geq 0.
   \end{eqnarray*}
   If for some $i \in I_{00}$ we had $\bar \eta^G_i < 0$, then due to $\eta^{G,k}_i = \delta^k_i H_i(x^k)$ we could infer $\delta^k_i \to + \infty$ due to $H_i(x^k) \to 0$. 
   Furthermore, $H_i(x^k) < 0$ has to hold for all $k$ large and thus $\eps_k > 0$ .
   Then  \eqref{eq:InexactSch1} yields $H_i(x^k) \in [- \eps_k,0)$ , which results in a contradiction to  \eqref{eq:deltaUnboundedScholtes}, because it implies
   \begin{equation*}
       \tfrac{1}{\eps_k} |G_i(x^k) H_i(x^k)| \leq |G_i(x^k)| \to 0.
   \end{equation*}

   These properties of $\bar \eta^G, \bar \eta^H$ together with \eqref{eq:PosLinDep}, and the assumption that MPVC-MFCQ holds in $x^*$, yields $\bar \eta^G = \bar \eta^H = 0$ and thus $\bar \delta_I \neq 0$.
   We know $\delta^k_i \to 0$ for all $i \in I_{+-}$ and thus $\bar \delta_{I_{+-}} = 0$.
   However, $\bar \delta_i H_i(x^*) = \bar \eta^G_i = 0$ for all $i \in I_{+0}$ also implies $\bar \delta_{I_{+0}} = 0$.
   Consequently, there has to be an $i \in I_{0+} \cup I_{0-}$ with $\bar \delta_i \neq 0$ and thus $\bar \delta_i > 0$.
   For $i \in I_{0-}$ this together with $\bar \eta^H_i = 0$ yields
   \begin{equation*}
        \lim_{k \to \infty} \frac{\nu_i^k}{\|(\eta^{G,k},\eta^{H,k}, \delta_I^k)\|}  = \bar \delta_i G_i(x^*) < 0,
   \end{equation*}
   a contradiction to \eqref{eq:InexactSch1}.
   The only remaining possibility is thus $\bar \delta_i > 0$ for some $i \in I_{0+}$.
   Again using $\bar \eta^H_i = 0$ we then know
   \begin{equation*}
        \lim_{k \to \infty} \frac{\nu_i^k}{\|(\eta^{G,k},\eta^{H,k}, \delta_I^k)\|}  = \bar \delta_i G_i(x^*) > 0.
   \end{equation*}
   According to our construction, this implies $\nu_i^k \to \infty$ and thus, by \eqref{eq:InexactSch1}, we have $|H_i(x^k)| \leq \frac{\eps_k}{\nu_i^k} \to 0$.
   Since both $\nu_i^k \to \infty$ and $\bar \delta_i > 0$ (hence $\delta_i^k\to \infty$), we know $\eps_k > 0$ for all $k$ large and 
   \begin{equation*}
       \tfrac{1}{\eps_k} |G_i(x^k)H_i(x^k)| \leq |G_i(x^k)| \tfrac{1}{\nu^k_i} \to 0,
    \end{equation*}
    a contradiction to \eqref{eq:deltaUnboundedScholtes}.
    This shows $\bar \delta_I = 0$.

   Consequently, the assumption that $(\eta^{G,k},\eta^{H,k},\delta^k_I)$ is unbounded has to    false.
   We can thus assume without loss of generality that the whole sequence $\{(\eta^{G,k},\eta^{H,k},\delta^k_I)\}$ converges to some limit $(\eta^{G,*}, \eta^{H,*}, \delta_I^*)$.
   Thanks to \eqref{eq:InexactScholtes2}, this limit then satisfies
   \begin{align}
        \begin{split}\label{eq:InexactScholtes3}
            \nabla f(x^*) & - \sum_{i=1}^l \eta^{H,*}_i \nabla H_i(x^*) + \sum_{i=1}^l \eta^{G,*}_i \nabla G_i(x^*)  = 0,
        \end{split}
    \end{align}
    where we again used $H_i(x^k) \to 0$ for $i \in I_{0+}\cup I_{0-}$, $G_i(x^k) \to 0$ for $i \in I_{+0}$ as well as $\delta_i^k\to 0$ for $i\in I_{+-}$.
    By what was proven above, the limit also satisfies
    \begin{equation*}
        \supp( \eta^{G,*}) \subseteq I_{00} \cup I_{+0}
        \quad \text{and} \quad
        \supp( \eta^{H,*}) \subseteq I_{00} \cup I_{0+} \cup I_{0-}
    \end{equation*}
    as well as $\eta^{H,*}_i \geq 0$ for all $i \in I_{0-}$ and $\eta^{G,*}_i \geq 0$ for all $i \in I_{+0} \cup I_{00}$.
    This ensures weak stationarity of $x^*$.

    In order to prove that $x^*$ is in fact T-stationary, assume that there were $i \in I_{00}$ with $\eta^{G,*}_i\eta^{H,*}_i > 0$.
    In case $\nu^k_i \to 0$, this implies
    \begin{equation*}
        \eta^{G,*}_i\eta^{H,*}_i = \lim_{k \to \infty} - (\delta^k_i)^2 G_i(x^k)H_i(x^k) > 0
    \end{equation*}
    and thus $G_i(x^k)H_i(x^k) < 0$ for all $k$ sufficiently large.
    Due to $t_k > 0$ this implies
    \begin{equation*}
        |\delta^k_i| t_k \leq |\delta^k_i| |G_i(x^k)H_i(x^k) - t_k| \leq \eps_k.
    \end{equation*}
    But since $|\delta^k_i| \to \infty$, that is a contradiction to $\eps_k = O(t_k)$.

    In case $\nu_i^k \not\to 0$, we can use that the assumption $\eta^{G,*}_i = \lim_{k \to \infty} \delta^k_i H_i(x^k) \neq 0$ implies $H_i(x^k) \neq 0$ for all $k$ large to infer $\eps_k > 0$ due to \eqref{eq:InexactSch1}.
    Then \eqref{eq:InexactSch1} guarantees $|H_i(x^k)| \leq \frac{\eps_k}{\nu_i^k}$ and thus again
    \begin{equation*}
        \tfrac{1}{\eps_k} |G_i(x^k)H_i(x^k)| \leq |G_i(x^k)| \tfrac{1}{\nu^k_i} \to 0,
    \end{equation*}
    in contradiction to \eqref{eq:deltaUnboundedScholtes}.
    Consequently our assumption was wrong and $\eta^{G,*}_i\eta^{H,*}_i \leq 0$ for all $i \in I_{00}$.
    Hence, $x^*$ is even T-stationary.
\end{proof}

Since we can use $\eps_k = 0$  in the previous theorem, the following exact convergence result is an immediate corollary. Note that this constitutes an improvement over a result by Achtziger et al. \cite[Theorem 3.3(a)]{AHK12}.

\begin{corollary}\label{cor:globalConvergenceExactMFCQ}
    Let $\{t_k\} \downarrow 0$ and $\{x^k\}$ a sequence of KKT points of $R^S(t_k)$ with $x^k \to x^*$.
    If MPVC-MFCQ holds in $x^*$, then $x^*$ is a T-stationary point of the MPVC \eqref{eq:MPVC}.
\end{corollary}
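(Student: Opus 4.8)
The plan is to obtain this as a direct specialization of Theorem~\ref{thm:globalConvergenceEpsMFCQ}, by recognizing an exact KKT point as the $\eps = 0$ instance of an $\eps$-stationary point. The first step is therefore to unpack the definition of an $\eps$-stationary point with $\eps = 0$: the bound $\|\cdot\|_\infty \le 0$ forces the stationarity residual to vanish exactly, the conditions $g_i(x^*) \le 0$ and $|h_i(x^*)| \le 0$ reduce to exact feasibility, the bound $\lambda_i \ge -\eps$ collapses to ordinary sign feasibility $\lambda_i \ge 0$, and $|g_i(x^*)\lambda_i| \le 0$ becomes exact complementary slackness. Thus a $0$-stationary point of a given NLP is precisely a KKT point of that NLP, and conversely. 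Applied to the regularized program $R^S(t_k)$, this means that each KKT point $x^k$ is, in particular, a $0$-stationary point of $R^S(t_k)$.

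With this identification in hand, I would set $\eps_k := 0$ for every $k$. The resulting constant sequence trivially satisfies $\{\eps_k\} \downarrow 0$, and the growth condition $\eps_k = O(t_k)$ holds since $0 \le C\, t_k$ for any $C > 0$. Consequently all hypotheses of Theorem~\ref{thm:globalConvergenceEpsMFCQ} are met: we have $\{t_k\} \downarrow 0$, a sequence $\{\eps_k\} \downarrow 0$ with $\eps_k = O(t_k)$, a sequence $\{x^k\}$ of $\eps_k$-stationary points of $R^S(t_k)$ converging to $x^*$, and MPVC-MFCQ at $x^*$. Invoking the theorem then immediately yields that $x^*$ is T-stationary for the MPVC~\eqref{eq:MPVC}, which is exactly the assertion.

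Since this is a pure specialization, there is essentially no genuine obstacle: the only point requiring a moment's care is the bookkeeping check that the $\eps = 0$ instance of the $\eps$-stationarity system reproduces the KKT system of $R^S(t_k)$ verbatim. Concretely, one should confirm that the relaxed multiplier bounds $\nu_i^k \ge -\eps_k$ and $\delta_i^k \ge -\eps_k$ appearing in \eqref{eq:InexactSch1}--\eqref{eq:InexactSch2} collapse to the nonnegativity $\nu_i^k \ge 0$, $\delta_i^k \ge 0$ demanded by exact KKT, and that the approximate complementarity and feasibility bounds become the corresponding equalities and exact inequalities. Once this is noted, the corollary follows with no further argument.
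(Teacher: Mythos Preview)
Your proposal is correct and matches the paper's own approach essentially verbatim: the paper simply remarks that one may take $\eps_k = 0$ in Theorem~\ref{thm:globalConvergenceEpsMFCQ}, whence the corollary is immediate. Your additional bookkeeping---verifying that $0$-stationarity coincides with the KKT conditions and that $\eps_k = 0$ trivially satisfies $\eps_k = O(t_k)$---is exactly the content the paper leaves implicit.
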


As the regularization method is based on the assumption that local minima of the regularized problems are KKT points, we conclude our discussion by verifying that MPVC-MFCQ locally ensures MFCQ for the regularized problems.
A similar result under the stronger assumption of MPVC-LICQ can be found in \cite[Theorem 5.3]{IzS09}.

\begin{theorem}\label{thm:globalStandardCQMFCQ}
    Let $x^*$ be feasible for the MPVC \eqref{eq:MPVC} such that MPVC-MFCQ holds in $x^*$.
    Then there exists a neighborhood $N(x^*)$ of $x^*$ and a $\bar t > 0$ such that for all $t \in (0,\bar t]$ and all $x \in N(x^*) \cap X^S(t)$ standard MFCQ for $R^S(t)$ is satisfied  in $x$.
\end{theorem}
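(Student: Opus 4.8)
The plan is to argue by contradiction in sequential form. Following the paper's convention of suppressing the standard constraints $g,h$, suppose the conclusion fails. Then for every $k$ there are $t_k>0$ with $t_k\le 1/k$ and a point $x^k\in X^S(t_k)$ with $\|x^k-x^*\|\le 1/k$ at which standard MFCQ for $R^S(t_k)$ is violated; thus $t_k\to 0$ and $x^k\to x^*$. Violation of MFCQ means the active gradients are positively linearly dependent, so there exist multipliers $\gamma_i^k\ge 0$ ($i\in I_H(x^k)$) and $\beta_i^k\ge 0$ ($i\in I_{GH}(x^k,t_k)$), not all zero, with
\begin{equation*}
   -\sum_{i\in I_H(x^k)}\gamma_i^k\nabla H_i(x^k)+\sum_{i\in I_{GH}(x^k,t_k)}\beta_i^k\big[G_i(x^k)\nabla H_i(x^k)+H_i(x^k)\nabla G_i(x^k)\big]=0,
\end{equation*}
where we set $\gamma_i^k=0$ off $I_H(x^k)$ and $\beta_i^k=0$ off $I_{GH}(x^k,t_k)$.

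First I would localize the active index sets using $x^k\to x^*$ and continuity. Since $H_i(x^*)>0$ on $I_+$, we get $I_H(x^k)\subseteq I_0$ for $k$ large; and since any $i\in I_{GH}(x^k,t_k)$ satisfies $G_i(x^k)H_i(x^k)=t_k>0$ together with $H_i(x^k)\ge 0$, feasibility forces $G_i(x^k),H_i(x^k)>0$ there, which excludes $I_{+-}$ and $I_{0-}$ (where $G_i(x^*)<0$), giving $I_{GH}(x^k,t_k)\subseteq I_{+0}\cup I_{0+}\cup I_{00}$; moreover $I_H(x^k)$ and $I_{GH}(x^k,t_k)$ are disjoint. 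Regrouping the relation, the coefficient of $\nabla G_i(x^k)$ is $b_i^k:=\beta_i^k H_i(x^k)$ and the coefficient of $\nabla H_i(x^k)$ is $a_i^k-\gamma_i^k$ with $a_i^k:=\beta_i^k G_i(x^k)$. The goal is to pass to the limit and produce a nontrivial positively linearly dependent combination of exactly the MPVC-MFCQ gradients $-\nabla H_i$ ($i\in I_{0-}$), $\nabla G_i$ ($i\in I_{+0}\cup I_{00}$) and $\nabla H_i$ ($i\in I_{0+}\cup I_{00}$), contradicting MPVC-MFCQ at $x^*$.

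The crux is the bi-active set $I_{00}$: there $G_i(x^k)H_i(x^k)=t_k\to 0$ forces $G_i(x^k)\to 0$ and $H_i(x^k)\to 0$, so the product-constraint gradient degenerates to zero and the raw multipliers $\beta_i^k$ may blow up and disappear in the limit. The remedy is to normalize not by the $\beta_i^k$ but by the \emph{effective} coefficients $a_i^k,b_i^k$. Concretely I would normalize by the norm of
\begin{equation*}
   v^k:=\big((\gamma_i^k)_{i\in I_{0-}},\ (b_i^k)_{i\in I_{+0}\cup I_{00}},\ (a_i^k-\gamma_i^k)_{i\in I_{0+}\cup I_{00}}\big).
\end{equation*}
Using $H_i(x^*)>0$ on $I_{+0}$, $G_i(x^*)>0$ on $I_{0+}$, the positivity of $G_i(x^k),H_i(x^k)$ on the active part of $I_{00}$, and disjointness of the two active sets, one checks that $v^k\neq 0$ whenever $(\gamma^k,\beta^k)\neq 0$ (in particular, for a bi-active $i\in I_{00}$ both $a_i^k,b_i^k>0$), so normalization is legitimate and the limit of $v^k/\|v^k\|$ has norm one.

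Finally I would show that the two residual families not recorded in $v^k$ vanish after dividing by $\|v^k\|$: the $\nabla H_i$-terms on $I_{+0}$ (coefficient $a_i^k$) and the $\nabla G_i$-terms on $I_{0+}$ (coefficient $b_i^k$). Indeed on $I_{+0}$ one has $a_i^k/b_i^k=G_i(x^k)/H_i(x^k)\to 0$ and on $I_{0+}$ one has $b_i^k/a_i^k=H_i(x^k)/G_i(x^k)\to 0$, so each residual is a vanishing multiple of a bounded component of $v^k/\|v^k\|$. Passing to the limit along a convergent subsequence then yields
\begin{equation*}
   \sum_{i\in I_{0-}}\bar\gamma_i(-\nabla H_i(x^*))+\sum_{i\in I_{+0}\cup I_{00}}\bar b_i\,\nabla G_i(x^*)+\sum_{i\in I_{0+}\cup I_{00}}\bar c_i\,\nabla H_i(x^*)=0,
\end{equation*}
with $\bar\gamma_i\ge 0$, $\bar b_i\ge 0$, $\bar c_i$ free, and $(\bar\gamma,\bar b,\bar c)\neq 0$; this contradicts MPVC-MFCQ. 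The main obstacle is exactly the degeneration on $I_{00}$, and the purpose of working with $a_i^k,b_i^k$ rather than $\beta_i^k$ is that the bi-active multipliers reappear as genuine, correctly signed coefficients of $\nabla G_i$ and $\nabla H_i$ (both admissible for $i\in I_{00}$) instead of being lost.
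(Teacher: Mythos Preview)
Your argument is correct and complete; the handling of the $I_{00}$ degeneracy via the effective coefficients $a_i^k,b_i^k$ is exactly the right idea, and the verification that $v^k\neq 0$ and that the residual terms on $I_{+0}$ and $I_{0+}$ vanish after normalization is sound.

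The paper takes a different route: instead of a sequential contradiction argument, it proceeds directly. It first invokes the stability of positive linear independence under perturbation (citing \cite[Proposition~2.2]{QiW00}) to transfer MPVC-MFCQ from $x^*$ to nearby points, and then argues that the active gradients of $R^S(t)$ are ``dominated'' by the MPVC-MFCQ family: on $I_{+0}$ one has $G_i(x)\approx 0$, $H_i(x)>0$, so the product gradient is a small perturbation of a positive multiple of $\nabla G_i(x)$; symmetrically on $I_{0+}$; and on $I_{GH}(x,t)\cap I_{00}$ the paper replaces the single product gradient $H_i(x)\nabla G_i(x)+G_i(x)\nabla H_i(x)$ by the pair $\nabla G_i(x)$ (sign-constrained) and $\nabla H_i(x)$ (free), which is legitimate since both $G_i(x),H_i(x)>0$ there. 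This yields MFCQ for $R^S(t)$ directly. Your approach is the contrapositive of this in sequential form; it is a bit longer but entirely self-contained, whereas the paper's version is more compact at the price of an external reference for the perturbation step.
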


\begin{proof}
    By continuity, there exists a neighborhood $N(x^*)$ of $x^*$ and a $\bar t > 0$ such that for all $t \in (0,\bar t]$ and all $x \in N(x^*) \cap X^S(t)$ we have the inclusions
    \begin{equation}\label{eq:globalMFCQinclusions}
           I_H(x) \subseteq I_{0}, \quad
           I_{GH}(x,t) \subseteq I_{00} \cup I_{0+} \cup I_{+0},
           \quad \text{and} \quad
            I_{H}(x) \cap I_{GH}(x,t) = \emptyset.
    \end{equation}
    Here, the last conditions follows directly from the definition of the regularized problem.
    Since MPVC-MFCQ holds in $x^*$, we know that the gradients
    $$
        (- \nabla H_i(x)) \; (i \in I_{0-}), \quad
        \nabla G_{i} (x) \; (i \in I_{00} \cup I_{+0}),
        \quad \text{and} \quad
        \nabla H_i(x) \; (i \in I_{00} \cup I_{0+})
    $$  
    are positively linearly independent in $x^*$.
    In view of \cite[Proposition 2.2]{QiW00}, this implies that these gradients remain positively linearly independent for all $x \in N(x^*) \cap X^S(t)$, if $N(x^*)$ is chosen sufficiently small.
    We trivially have the inclusions
    \begin{align*}
        & (I_{GH}(x,t) \cap I_{00}) \cup (I_{GH}(x,t) \cap I_{+0}) \subseteq I_{00} \cup I_{+0}, \\
        & (I_{GH}(x,t) \cap I_{00}) \cup (I_{GH}(x,t) \cap I_{0+}) \subseteq I_{00} \cup I_{0+}, \\
        & I_H(x) \cap (I_{00} \cup I_{0+}) \subseteq I_{00} \cup I_{0+},\\
        & I_H(x) \cap I_{0-} \subseteq I_{0-}
    \end{align*}
    for all $x \in N(x^*) \cap X^S(t)$.
    Exploiting  $G_i(x) > 0, H_i(x) \approx 0$ for $i \in I_{0+}$ as well as $G_i(x) \approx 0, H_i(x) > 0$ for $i \in I_{+0}$ and shrinking $N(x^*)$ further, if necessary, we can ensure that for all $x \in X^S(t) \cap N(x^*)$ the gradients
    \begin{equation*}
        \begin{array}{rrl}
            & -\nabla H_i(x) & (i \in I_{H}(x)\cap I_{0-}) \\
            & \nabla G_i(x) & (i \in I_{GH}(x,t) \cap I_{00})  \\
            & H_i(x)\nabla G_i(x) + G_i(x)\nabla H_i(x) & (i \in I_{GH}(x,t) \cap I_{+0})  \\
            \text{and }&\nabla H_i(x) & (i \in I_{GH}(x,t) \cap I_{00}) \\
            & H_i(x)\nabla G_i(x) + G_i(x)\nabla H_i(x) &(i \in I_{GH}(x,t) \cap I_{0+}) \\
            & \nabla H_i(x) & (i \in I_{H}(x)\cap (I_{00} \cup I_{0+})) \\
        \end{array}
    \end{equation*}
    are positively linearly independent.
    But then thanks to the inclusions \eqref{eq:globalMFCQinclusions} for all $x \in N(x^*) \cap X^S(t)$ the gradients
    \begin{equation}\label{eq:PLIndep}
        \begin{array}{rrl}
            & -\nabla H_i(x) & (i \in I_{H}(x)) \\
            & H_i(x)\nabla G_i(x) + G_i(x)\nabla H_i(x) & (i \in I_{GH}(x,t))  
        \end{array}
    \end{equation}
    are positively linearly independent, which is  MFCQ for the regularized problem $R^S(t)$ in $x$.
\end{proof}
\subsection{The Local Regularization Scheme}\label{ssec:localRegularization}

While the previously considered global regularization relaxed the vanishing constraint globally, the next regularization method, which was introduced by Steffensen and Ulbrich \cite{StU10} for MPCCs and used for MPVCs by Hoheisel et al. \cite{HKS12local}, relaxes the vanishing constraint only locally around the origin.
For a given regularization parameter $t> 0$, the regularized problem reads

\begin{minipage}[c]{0.5\textwidth}
    \begin{align} 
        \min \ & f(x) \nonumber\\ 
        \st \ & g_i(x) \leq 0 \  (i = 1,\ldots,m), \nonumber  \\ 
         & h_j(x) = 0 \ (j = 1,\ldots,p), \tag{$R^{SU}(t)$} \nonumber \\ 
         & H_i(x) \geq 0 \ (i = 1,\ldots,l),  \nonumber \\ 
         & \Phi^{SU}_i(x;t) \leq 0 \  (i = 1,\ldots,l),  \nonumber 
    \end{align}
\end{minipage}
\qquad
\begin{minipage}[c]{0.45\textwidth}
    \begin{tikzpicture}[scale = 1.25]
        \fill[blue!30] (-1.5,0) -- (0,0) -- (0,2) -- (-1.5,2) -- cycle;
        \fill[blue!30]  (1,0) .. controls (0,0.0) and (0.0,0) .. (0,1) -- (0,0) -- cycle;
        \draw[->] (-1.75,0) -- (1,0) node[below]{$t$} -- (2.5,0) node[below]{$G_i(x)$};
        \draw[->] (0,-0.5)  -- (0,1) node[left]{$t$} -- (0,2.5) node[left]{$H_i(x)$};
        \draw[line width = 1.5pt, blue] (-1.5,0) -- (0,0) -- (2,0);
        \draw[line width = 1.5pt, blue] (1,0) .. controls (0,0.0) and (0.0,0) .. (0,1) -- (0,2);
        \draw[->] (0.5,1.5) node[right]{$\Phi^{SU}_i(x;t)=0$} -- (0,0.75);
    \end{tikzpicture}
\end{minipage}

\noindent
with $\Phi_i^{SU}: \R^n \rightarrow \R$ defined as
$$
    \Phi_i^{SU} (x;t) = G_i(x) + H_i(x) - \varphi(G_i(x) - H_i(x);t)
$$
where $\varphi:\R \to \R$ is defined as
\begin{equation*}
    \varphi(a;t) :=
    \begin{cases}
        |a|   & \text{if } |a| \geq t,\\
        t \theta \left( \frac{a}{t} \right) &  \text{if } |a| < t
    \end{cases}
\end{equation*}
and $\theta:[-1,1] \rightarrow \R$ is a function satisfying the following conditions:
\begin{itemize}
    \item[(a)] $\theta$ is twice continuously differentiable on [-1,1];
    \item[(b)] $\theta(-1) = \theta(1) = 1$;
    \item[(c)] $\theta' (-1) = -1$ and $\theta' (1) = 1$;
    \item[(d)] $\theta'' (-1) = \theta''(1) = 0$;
    \item[(e)] $\theta''(x) > 0$ for all $x \in (-1,1)$.
\end{itemize}
We denote the feasible set of $R^{SU}(t)$ by  $X^{SU}(t)$.  

The following lemma collects some important properties of the function $\Phi_i^{SU}$.

\begin{lemma}[{{\cite[Lemma 4.5-4.6]{HKS12local}}}] \label{lem:localPropertiesPhi}
    For $t>0$  and $x\in \R^n$ we have for all $i=1,\ldots,l$
    \begin{equation*}
        \Phi_i^{SU}(x;t) 
        \begin{cases}
            < 0 & \text{if } \min\{G_i(x),H_i(x)\} < 0, \\
            < 0 & \text{if } \min\{G_i(x),H_i(x)\} = 0 \text{ and } |G_i(x) - H_i(x)| < t, \\
            = 0 & \text{if } \min\{G_i(x),H_i(x)\} = 0 \text{ and } |G_i(x) - H_i(x)| \geq t, \\
            > 0 & \text{if } \min\{G_i(x),H_i(x)\} > 0 \text{ and } |G_i(x) - H_i(x)| \geq t, \\
        \end{cases}
    \end{equation*} 
    and
    $$
        \nabla\Phi_i^{SU}(x; t) = \alpha_i \nabla G_i(x) + \beta_i \nabla H_i(x)
    $$
    with
    \begin{equation*}
       (\alpha_i, \beta_i) = 
       \begin{cases}
        \binom{2}{0} & \text{if } G_i(x) - H_i(x) \leq -t, \\
        \binom{0}{2} & \text{if } G_i(x) - H_i(x)  \geq t, \\
        \left(\begin{smallmatrix}1-\theta'\left(\frac{G_i(x)-H_i(x)}{t}\right)\\1+ \theta'\left(\frac{G_i(x)-H_i(x)}{t}\right)\end{smallmatrix}\right), & \text{if } |G_i(x) - H_i(x)| < t.
       \end{cases}
    \end{equation*}
\end{lemma}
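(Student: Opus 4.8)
The plan is to reduce the entire statement to a scalar analysis, exploiting that $\Phi_i^{SU}(x;t)$ depends on $x$ only through the pair $(G_i(x),H_i(x))$. Writing $a:=G_i(x)$, $b:=H_i(x)$, and introducing $u:=a-b$ and $v:=a+b$, we have $\Phi_i^{SU}(x;t)=v-\varphi(u;t)$ and, crucially, the identity $2\min\{a,b\}=v-|u|$ (check the two cases $a\ge b$ and $a<b$ directly). Thus the sign of $\Phi_i^{SU}$ is governed entirely by comparing $\varphi(u;t)$ with $|u|$. First I would dispatch the region $|u|\ge t$, i.e.\ $|G_i-H_i|\ge t$: here $\varphi(u;t)=|u|$ by definition, so $\Phi_i^{SU}(x;t)=v-|u|=2\min\{G_i(x),H_i(x)\}$, and its sign equals that of $\min\{G_i,H_i\}$. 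This settles at once the three bullets falling in this region (the $>0$ case, the $=0$ case, and the part of $\min<0$ with $|u|\ge t$).

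Next I would treat the region $|u|<t$. The key estimate is $\varphi(u;t)>|u|$ whenever $0\le|u|<t$, equivalently $\theta(s)>|s|$ for $s=u/t\in(-1,1)$. I would prove this by a tangent-line argument: conditions (b)--(c) make the tangent line of $\theta$ at $s=1$ equal to $y=s$ and at $s=-1$ equal to $y=-s$, i.e.\ exactly the two branches of $|s|$; strict convexity (e) then forces $\theta$ to lie strictly above each tangent on the open interval, giving $\theta(s)>\max\{s,-s\}=|s|$. Consequently, on this region $\Phi_i^{SU}(x;t)=v-\varphi(u;t)<v-|u|=2\min\{G_i,H_i\}$, so $\Phi_i^{SU}<0$ whenever $\min\{G_i,H_i\}\le 0$. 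This yields the two remaining bullets (the $|u|<t$ part of $\min<0$, and $\min=0$ with $|u|<t$), completing the sign characterization.

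For the gradient formula I would first verify that $\varphi(\cdot;t)$ is continuously differentiable on all of $\R$: conditions (b)--(c) ensure the inner branch $t\theta(\cdot/t)$ matches $|\cdot|$ in both value and slope at $u=\pm t$ (and (d) in fact upgrades this to $C^2$), so the one-sided derivatives agree there, and $\varphi'(u;t)=1$ for $u\ge t$, $\varphi'(u;t)=-1$ for $u\le -t$, and $\varphi'(u;t)=\theta'(u/t)$ for $|u|<t$. The chain rule then gives $\nabla\Phi_i^{SU}(x;t)=\bigl(1-\varphi'(G_i(x)-H_i(x);t)\bigr)\nabla G_i(x)+\bigl(1+\varphi'(G_i(x)-H_i(x);t)\bigr)\nabla H_i(x)$, and substituting the three values of $\varphi'$ produces exactly the claimed coefficients $(\alpha_i,\beta_i)=\binom{2}{0}$, $\binom{0}{2}$, and $\bigl(1-\theta'(\tfrac{G_i-H_i}{t}),\,1+\theta'(\tfrac{G_i-H_i}{t})\bigr)$.

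The only genuinely non-mechanical step is the strict inequality $\theta(s)>|s|$ on $(-1,1)$; everything else is the change of variables $v-|u|=2\min\{a,b\}$ together with routine bookkeeping. I would therefore flag two points where care is needed: establishing that inequality (where the boundary data (b)--(c) and strict convexity (e) combine), and verifying the $C^1$-gluing of $\varphi$ at $|u|=t$ so that the piecewise gradient formula is unambiguous precisely at the breakpoints $G_i-H_i=\pm t$ (where conditions (b)--(d) are used).
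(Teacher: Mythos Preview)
Your argument is correct. The paper does not actually prove this lemma; it merely quotes the result from \cite[Lemma~4.5--4.6]{HKS12local} without proof, so there is no ``paper's own proof'' to compare against. Your reduction via $u=G_i-H_i$, $v=G_i+H_i$ and the identity $2\min\{a,b\}=v-|u|$ is the natural way to establish the sign claims, and the tangent-line/strict-convexity argument for $\theta(s)>|s|$ on $(-1,1)$ is exactly the right ingredient. The gradient computation and the $C^1$-gluing check at $u=\pm t$ are routine and correctly handled.
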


The following exact convergence result has been established by Hoheisel et al. \cite{HKS12local}.

\begin{theorem}[{{\cite[Theorem 4.12]{HKS12local}}}] \label{thm:localConvergenceExactCPLD}
    Let $\{t_k\} \downarrow 0$ and let  $\{x^k\}$ be a sequence of KKT points of $R^{SU}(t_k)$ with $x^k\to x^*$ such that MPVC-CPLD holds at $x^*$.
    Then $x^*$ is a T-stationary point of the MPVC \eqref{eq:MPVC}.
\end{theorem}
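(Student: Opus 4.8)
The plan is to mirror the structure of the proof of Theorem~\ref{thm:globalConvergenceEpsMFCQ}, but with the regularized constraint $\Phi_i^{SU}$ in place of $G_iH_i-t$ and with the weaker MPVC-CPLD replacing MPVC-MFCQ. First I would write down the KKT conditions of $R^{SU}(t_k)$ at $x^k$: omitting the standard constraints as agreed, there are multipliers $\nu^k,\delta^k\in\R^l$ with $\nu_i^k,\delta_i^k\ge 0$, complementarity $\nu_i^kH_i(x^k)=0$ and $\delta_i^k\Phi_i^{SU}(x^k;t_k)=0$, and
\[
  \nabla f(x^k)-\sum_{i=1}^l\nu_i^k\nabla H_i(x^k)+\sum_{i=1}^l\delta_i^k\nabla\Phi_i^{SU}(x^k;t_k)=0 .
\]
Using the gradient formula of Lemma~\ref{lem:localPropertiesPhi}, $\nabla\Phi_i^{SU}(x^k;t_k)=\alpha_i^k\nabla G_i(x^k)+\beta_i^k\nabla H_i(x^k)$ with $\alpha_i^k,\beta_i^k\ge 0$, I would set $\eta_i^{G,k}:=\delta_i^k\alpha_i^k$ and $\eta_i^{H,k}:=\nu_i^k-\delta_i^k\beta_i^k$, so that the identity takes the MPVC-stationarity form $\nabla f(x^k)+\sum_i\eta_i^{G,k}\nabla G_i(x^k)-\sum_i\eta_i^{H,k}\nabla H_i(x^k)=0$, and I would record that $\eta_i^{G,k}\ge 0$ always.

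Next I would dispose of all indices outside $I_{00}$. Continuity together with $t_k\downarrow 0$ gives feasibility of $x^*$. For $i\in I_+$ we have $H_i(x^k)>0$ eventually, hence $\nu_i^k=0$; for $i\in I_{+-}$ and $i\in I_{0-}$ the sign pattern forces $\min\{G_i,H_i\}<0$, so $\Phi_i^{SU}<0$ and $\delta_i^k=0$ by Lemma~\ref{lem:localPropertiesPhi}. Since $G_i(x^k)-H_i(x^k)$ tends to $-H_i(x^*)<0$ on $I_{+0}$ and to $G_i(x^*)>0$ on $I_{0+}$, the same lemma pins $(\alpha_i^k,\beta_i^k)$ to $(2,0)$ resp.\ $(0,2)$ for large $k$. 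Collecting these facts yields the correct supports $\supp(\eta^{G,k})\subseteq I_{+0}\cup I_{00}$ and $\supp(\eta^{H,k})\subseteq I_0$, together with $\eta_i^{H,k}\ge 0$ on $I_{0-}$ and $\eta_i^{G,k}\ge 0$ on $I_{+0}\cup I_{00}$, which is exactly the sign pattern of weak stationarity.

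The main work, and the step I expect to be the chief obstacle, is the boundedness of $\{(\eta^{G,k},\eta^{H,k})\}$: the direct MPVC-MFCQ argument of Theorem~\ref{thm:globalConvergenceEpsMFCQ} breaks down, and the weaker MPVC-CPLD must be used through a Carathéodory-type reduction. For each $k$ I would re-express $-\nabla f(x^k)$ over a linearly independent subset of $\{\nabla G_i(x^k)\}\cup\{-\nabla H_i(x^k)\}$ with the same sign pattern; since there are only finitely many such index patterns, a subsequence has a constant one. If the reduced multipliers were unbounded, normalizing and passing to the limit would produce a nontrivial relation $\sum\bar\eta_i^G\nabla G_i(x^*)-\sum\bar\eta_i^H\nabla H_i(x^*)=0$ with nonnegative coefficients on $\nabla G_i$ ($i\in I_{+0}\cup I_{00}$) and on $-\nabla H_i$ ($i\in I_{0-}$) and free coefficients on the remaining $\nabla H_i$; this is a positive linear dependence in the precise sense of MPVC-CPLD, which then forces the involved gradients to remain linearly dependent near $x^*$, contradicting their independence at $x^k$. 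Hence the reduced multipliers are bounded, and their limit furnishes weak stationarity of $x^*$.

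Finally, for the $I_{00}$ condition I would exploit that $\delta_i^k>0$ forces $\Phi_i^{SU}(x^k;t_k)=0$, which by Lemma~\ref{lem:localPropertiesPhi} means $|G_i(x^k)-H_i(x^k)|\ge t_k$ and thus $(\alpha_i^k,\beta_i^k)\in\{(2,0),(0,2)\}$; combined with $\nu_i^kH_i(x^k)=0$ this yields $\eta_i^{G,k}\eta_i^{H,k}=0$, so for every $i\in I_{00}$ and every $k$ at most one of $\eta_i^{G,k},\eta_i^{H,k}$ is nonzero. Passing to a further subsequence that fixes, for each such $i$, which of the two vanishes, this decoupling is inherited by the Carathéodory subset (the reduction only deletes indices), so in the limit $\eta_i^{G,*}\eta_i^{H,*}=0$ on $I_{00}$ and $x^*$ is T-stationary (in fact M-stationary). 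The two delicate points I anticipate are the sign bookkeeping required to match the reduced limiting relation to the exact MPVC-CPLD sign convention, and verifying that the per-index $I_{00}$ decoupling genuinely survives the reduction; the feasibility and index-set computations I would treat only briefly.
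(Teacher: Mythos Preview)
The paper itself does not supply a proof of this result; it is quoted from \cite{HKS12local}. So there is nothing to compare against, and your proposal has to be judged on its own. The overall architecture (rewrite the KKT system via Lemma~\ref{lem:localPropertiesPhi}, determine supports and signs, apply a Carath\'eodory reduction as in \cite[Lemma~A.1]{StU10}, and invoke MPVC-CPLD for boundedness) is sound and matches how the analogous exact result for the nonsmooth regularization, Theorem~\ref{thm:NonsmoothconvergenceExactCPLD}, is argued in this paper. Your weak-stationarity part is correct.

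The gap is in the T-stationarity step. Your claim that ``$\Phi_i^{SU}(x^k;t_k)=0$ by Lemma~\ref{lem:localPropertiesPhi} means $|G_i(x^k)-H_i(x^k)|\ge t_k$'' is false: Lemma~\ref{lem:localPropertiesPhi} does not cover the case $\min\{G_i,H_i\}>0$ with $|G_i-H_i|<t$, and in exactly that regime the zero set of $\Phi_i^{SU}$ is the curved arc joining $(t,0)$ to $(0,t)$ drawn in the figure next to $R^{SU}(t)$. On that arc both $\alpha_i^k,\beta_i^k\in(0,2)$, and one has $H_i(x^k)>0$, hence $\nu_i^k=0$ and $\eta_i^{G,k}=\delta_i^k\alpha_i^k>0$ together with $\eta_i^{H,k}=-\delta_i^k\beta_i^k<0$. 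So ``at most one of $\eta_i^{G,k},\eta_i^{H,k}$ is nonzero'' fails, and your parenthetical ``in fact M-stationary'' is wrong---which is consistent with the theorem only asserting T-stationarity.

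The fix is to replace the decoupling by the correct sign observation: for $i\in I_{00}$, whenever $\eta_i^{G,k}>0$ one necessarily has $H_i(x^k)>0$ (otherwise $\Phi_i^{SU}=0$ with $H_i=0$ forces $G_i\ge t_k$ and hence $\alpha_i^k=0$), so $\nu_i^k=0$ and $\eta_i^{H,k}=-\delta_i^k\beta_i^k\le 0$. Thus $\eta_i^{G,k}\eta_i^{H,k}\le 0$ for all $i\in I_{00}$. Passing to a subsequence that fixes the set $J:=\{i\in I_{00}\mid \eta_i^{G,k}>0\}$, put $-\nabla H_i(x^k)$ for $i\in J$ into the nonnegative block of the Carath\'eodory reduction (its coefficient $-\eta_i^{H,k}\ge 0$), and keep $\nabla H_i(x^k)$ for $i\in I_{00}\setminus J$ in the free block; since MPVC-CPLD treats all $\nabla H_i$ with $i\in I_{00}$ as free, the normalized limiting relation is still a positive linear dependence in the MPVC-CPLD sense, so boundedness goes through unchanged. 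After reduction $\tilde\eta_i^{G,k}=0$ for $i\in I_{00}\setminus J$ (support preserved) and $\tilde\eta_i^{G,k}\ge 0$, $\tilde\eta_i^{H,k}\le 0$ for $i\in J$ (signs preserved), whence $\eta_i^{G,*}\eta_i^{H,*}\le 0$ on $I_{00}$ in the limit, i.e.\ T-stationarity.
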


To facilitate the subsequent analysis of the inexact case, we need some new index sets.
For $t>0$ and $x \in X^{SU}(t)$ we define
\begin{eqnarray*}
   I_H(x)&:=&\{i\mid H_i(x)=0\},\\
   I_{\Phi}(x,t)&:=&\{i\mid \Phi_i^{SU}(x; t) = 0\}.
\end{eqnarray*}

Then analogously to the MPCC case, we see that in the case of $\eps$-stationary points of the regularized problems, the theoretical convergence properties of this scheme deteriorate.

\begin{theorem}\label{thm:localConvergenceEpsMFCQ}
    Let $\{t_k\}, \{\eps_k\}  \downarrow 0$, and $\{x^{k}\}$ be a sequence of $\eps_k$-stationary points of $R^{SU}(t_{k})$ with $x^k \rightarrow x^*$ such MPVC-MFCQ holds at $x^{*}$.
    Then $x^{*}$ is a weakly stationary point of \eqref{eq:MPVC}.
\end{theorem}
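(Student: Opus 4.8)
The plan is to mirror the strategy used in the proof of Theorem~\ref{thm:globalConvergenceEpsMFCQ}, but to exploit the cleaner gradient structure of $\Phi_i^{SU}$ provided by Lemma~\ref{lem:localPropertiesPhi}. First I would write out $\eps_k$-stationarity of $x^k$ for $R^{SU}(t_k)$: there exist multipliers $\nu^k\in\R^l$ (for $H_i\ge0$) and $\delta^k\in\R^l$ (for $\Phi_i^{SU}\le0$) with
\begin{equation*}
    \Big\|\nabla f(x^k)-\sum_{i=1}^l\nu_i^k\nabla H_i(x^k)+\sum_{i=1}^l\delta_i^k\nabla\Phi_i^{SU}(x^k;t_k)\Big\|_\infty\le\eps_k,
\end{equation*}
together with the estimates $\nu_i^k\ge-\eps_k$, $|\nu_i^kH_i(x^k)|\le\eps_k$, $\delta_i^k\ge-\eps_k$, $\Phi_i^{SU}(x^k;t_k)\le\eps_k$ and $|\delta_i^k\Phi_i^{SU}(x^k;t_k)|\le\eps_k$. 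Since $\varphi(a;t)\to|a|$ as $t\downarrow0$, one has $\Phi_i^{SU}(x^k;t_k)\to 2\min\{G_i(x^*),H_i(x^*)\}$, which together with $H_i(x^*)\ge0$ yields feasibility of $x^*$ for \eqref{eq:MPVC}. From here I read off the elementary limits: $\nu_i^k\to0$ for $i\in I_+$ (since $H_i(x^*)>0$), and $\delta_i^k\to0$ for $i\in I_{+-}\cup I_{0-}$, because there $\min\{G_i,H_i\}<0$ at $x^*$ forces $\Phi_i^{SU}(x^k;t_k)$ to stay bounded away from $0$.

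Next I would invoke Lemma~\ref{lem:localPropertiesPhi} to write $\nabla\Phi_i^{SU}(x^k;t_k)=\alpha_i^k\nabla G_i(x^k)+\beta_i^k\nabla H_i(x^k)$ and identify $(\alpha_i^k,\beta_i^k)$ on each index set. Because $G_i(x^k)-H_i(x^k)\to G_i(x^*)-H_i(x^*)$ while $t_k\downarrow0$, we eventually have $(\alpha_i^k,\beta_i^k)=(2,0)$ on $I_{+-}\cup I_{+0}\cup I_{0-}$ (where this difference is negative) and $(\alpha_i^k,\beta_i^k)=(0,2)$ on $I_{0+}$ (where it is positive), while on the bi-active set $I_{00}$ the pair may take the intermediate value, for which convexity of $\theta$ gives $\alpha_i^k,\beta_i^k\in[0,2]$ with $\alpha_i^k+\beta_i^k=2$; in particular $\alpha_i^k,\beta_i^k\ge0$ throughout. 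Setting
\begin{equation*}
    \eta_i^{G,k}:=\delta_i^k\alpha_i^k,\qquad \eta_i^{H,k}:=\nu_i^k-\delta_i^k\beta_i^k,
\end{equation*}
the stationarity estimate becomes $\|\nabla f(x^k)+\sum_i\eta_i^{G,k}\nabla G_i(x^k)-\sum_i\eta_i^{H,k}\nabla H_i(x^k)\|_\infty\le\eps_k$, with no leftover terms; this is the key simplification over the global case, where separate tracking of $\delta_I^k$ was needed.

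The main step is the boundedness of $\{(\eta^{G,k},\eta^{H,k})\}$, which I expect to be the principal obstacle. Arguing by contradiction as in Theorem~\ref{thm:globalConvergenceEpsMFCQ}, assume $\|(\eta^{G,k},\eta^{H,k})\|\to\infty$, normalize, and pass to a convergent subsequence with nonzero limit $(\bar\eta^G,\bar\eta^H)$. Dividing the gradient estimate by the norm and letting $k\to\infty$ annihilates $\nabla f$ and $\eps_k$ and yields $\sum_i\bar\eta_i^G\nabla G_i(x^*)-\sum_i\bar\eta_i^H\nabla H_i(x^*)=0$. The elementary limits pin down the supports $\supp(\bar\eta^G)\subseteq I_{+0}\cup I_{00}$ and $\supp(\bar\eta^H)\subseteq I_{0-}\cup I_{0+}\cup I_{00}$, while the sign estimates ($\delta_i^k\ge-\eps_k$, $\alpha_i^k\ge0$, $\nu_i^k\ge-\eps_k$) give $\bar\eta_i^G\ge0$ on $I_{+0}\cup I_{00}$ and $\bar\eta_i^H\ge0$ on $I_{0-}$. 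This is precisely a nontrivial positive linear dependence of the gradients featuring in MPVC-MFCQ, contradicting the hypothesis at $x^*$; hence the multipliers are bounded. The delicate part is matching these sign and support conditions on the normalized limit against the exact pattern required by MPVC-MFCQ.

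Finally, boundedness lets me pass to a convergent subsequence $(\eta^{G,k},\eta^{H,k})\to(\eta^{G,*},\eta^{H,*})$ and take the limit in the gradient estimate to obtain $\nabla f(x^*)+\sum_i\eta_i^{G,*}\nabla G_i(x^*)-\sum_i\eta_i^{H,*}\nabla H_i(x^*)=0$; the same support and sign information verifies exactly the multiplier pattern of weak stationarity in Definition~\ref{def:weakstat}(a). It is worth noting where an improvement to T-stationarity breaks down: on $I_{00}$ both $\alpha_i^k$ and $\beta_i^k$ may be strictly positive, so $\eta_i^{G,*}$ and $\eta_i^{H,*}$ can be simultaneously nonzero with no available sign relation between them, which is exactly why only weak --- and not T- --- stationarity can be guaranteed in the inexact setting.
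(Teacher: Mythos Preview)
Your proposal is correct and follows essentially the same route as the paper's proof: rewrite $\eps_k$-stationarity via Lemma~\ref{lem:localPropertiesPhi}, introduce the same multipliers $\eta_i^{G,k}=\delta_i^k\alpha_i^k$ and $\eta_i^{H,k}=\nu_i^k-\delta_i^k\beta_i^k$, argue boundedness by contradiction through a normalized limit that violates MPVC-MFCQ, and then pass to the limit to obtain weak stationarity. The only cosmetic difference is that you dispose of the support conditions on $I_{+-}\cup I_{0-}$ up front via $\delta_i^k\to0$, whereas the paper handles them inside the contradiction argument; both are equally valid.
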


\begin{proof}
    Since $x^{k}$ are $\eps_k$-stationary points of $R^{SU}(t_{k})$, there exist multipliers $\{(\nu^k, \delta^k)\}$ such that for all $k\in \N$
    \begin{eqnarray}
        && \left\| \nabla f(x^k) - \sum^{l}_{i=1} \nu_i^k \nabla H_i(x^k) + \sum^{l}_{i=1} \delta_i^k \nabla \Phi^{SU}_{i} (x^k;t_k)  \right\|_{\infty} \, \leq \varepsilon_k, \label{eq:SU1} \\
        && H_i(x^k) \geq -\eps_k, \quad \nu_i^k \geq -\eps_k, \quad |\nu_i^kH_i(x^k)| \leq \eps_k \quad (i = 1,\dots,l), \label{eq:SU2} \\
        && \Phi^{SU}_i (x; t_k) \leq \eps_k, \quad \delta_i^{k} \geq -\eps_k, \quad |\delta_i^{k} \Phi^{SU}_i (x; t_k)| \leq \eps_k \quad (i = 1,\dots, l). \label{eq:SU3} 
    \end{eqnarray}
    In particular,  $x^*$ is  feasible for  \eqref{eq:MPVC}.
    We now observe that, for all $k\in \N$ and $i=1\dots,l$, we have 
    \[
        \nabla\Phi_i^{SU}(x^k);t_k=\alpha_i^k \nabla G_i(x^k)+\beta_i^kH_i(x^k)
    \]
    with $(\alpha_i^k, \beta_i^k)$ given by Lemma \ref{lem:localPropertiesPhi}.
    Hence, defining the multipliers
    $$
         \eta_i^{H,k} := \nu_i^{k} - \delta_i^{k}\beta_i^k
         \quad \text{and} \quad
         \eta_i^{G,k}  := \delta_i^k \alpha_i^k
    $$
    for all $i = 1,\dots,l$, we can rewrite \eqref{eq:SU1} as
    \begin{align}\label{eq:SU1a}
        \left\| \nabla f(x^k) - \sum^{l}_{i=1} \eta_i^{H,k}  \nabla H_i(x^k) + \sum^{l}_{i=1} \eta_i^{G,k}  \nabla G_i(x^k)  \right\|_{\infty} \leq \varepsilon_k.
    \end{align}
    We claim that the sequence $\{(\eta^{G,k}, \eta^{H,k})\}$ is bounded.
    Otherwise, we may assume w.l.o.g. that the whole normalized sequence converges:
    \[
        \frac{(\eta^{G,k}, \eta^{H,k})}{\|(\eta^{G,k}, \eta^{H,k})\|} \longrightarrow (\bar \eta^{G}, \bar \eta^{H}) \neq 0.
    \]
    Dividing (\ref{eq:SU1a}) by $\|(\eta^{G,k}, \eta^{H,k})\|$ and passing to the limit then yields
    \begin{align} \label{eq:SU4}
        - \sum^{l}_{i=1} \bar \eta^{H}_i \nabla H_i(x^k) + \sum^{l}_{i=1} \bar \eta^{G}_i \nabla G_i(x^k) = 0.
    \end{align}

    We now  show that  $\supp(\bar \eta^{G}) \subseteq I_{00}\cup I_{+0}$.
    For all $i\in I_{0+}$, we have $H_i(x^*) = 0$ and $G_i(x^*) > 0$.
    This implies $G_i(x^k) - H_i(x^k) \geq t_k$ for $k$ sufficiently large and thus, by Lemma \ref{lem:localPropertiesPhi}, $\alpha_i^k = 0$.
    It then follows that $\eta_i^{G,k} = \alpha_i^k \delta_i^k = 0$ for all $k$ large and therefore $\bar \eta^{G}_i=0$.

    For all  $i \in I_{0-}\cup I_{+-}$,  we  have $H_i(x^{*}) \geq 0$ and $G_i(x^{*}) < 0$.
    This implies $G_i(x^k) - H_i(x^k) \leq -t_k$ for $k$ sufficiently large and by Lemma \ref{lem:localPropertiesPhi}, we then have  $\alpha_i^k = 2$.
    It follows that $\eta_i^{G,k} = \alpha_i^k \delta_i^k = 2\delta_i^k$, and hence $\bar \eta_i^G \neq 0$ is only possible if $|\delta_i^k|  \not \rightarrow 0$.
    By \eqref{eq:SU3}, this would imply $\Phi^{SU}_i (x^k; t_k) \rightarrow 0$ and thus for $k$ sufficiently large
    \[
        2G_i(x^k) = G_i(x^k) + H_i(x^k) - |G_i(x^k) - H_i(x^k)| =G_i(x^k) + H_i(x^k) - \varphi(G_i(x^k) - H_i(x^k); t)\rightarrow 0.
    \]
    But this would contradict $i \in I_{0-}\cup I_{+-}$.
    All in all, we have shown  $\supp(\bar \eta^{G}) \subseteq I_{00}\cup I_{+0}$.

    The next step is to show $\supp(\bar \eta^{H}) \subseteq I_{0}$.
    For all $i \in I_+ = I_{+-} \cup I_{+0}$ we have $H_i(x^{*}) >0$ and $G_i(x^{*}) \leq 0$.
    Then for all $k$ sufficiently large $G_i(x^k) - H_i(x^k) \leq -t_k$  follows and Lemma \ref{lem:localPropertiesPhi} yields $\beta_i^k = 0$.
    This implies $\eta^{H,k}_i = \nu_i^k$ for all $k$ sufficiently large.
    Thus, $\bar \eta^H_i \neq 0$ is only possible, if $|\nu^k_i| \neq 0$.
    But then $H_i(x^*) = 0$ has to hold due to \eqref{eq:SU2} , which contradicts $i\in I_{+}$.
    
    Using the previous observations,  \eqref{eq:SU4} reduces to 
    \begin{align*}
        - \sum_{i \in I_0} \bar \eta^{H}_i \nabla H_i(x^k) + \sum_{i \in I_{00} \cup I_{0+}} \bar \eta^{G}_i \nabla G_i(x^k) = 0.
        \end{align*}
    We now observe that, for $i\in I_{0-}$ and  $k$ sufficiently large, we have $G_i(x^k)-H_i(x^k)<-t_k$  and hence $\beta^k_i = 0$ and $ \eta_i^{H,k}=\nu_i^k\geq -\eps_k$.
    This guarantees $\bar \eta^{H}_i \geq 0$ for $i\in I_{0-}$.
    Since $\eta_i^{G,k} = \alpha_i^k\delta_i^k$ with $\alpha_i^k\in [0,2]$ and $\delta_i^k\geq -\eps_k$ for all $k\in \N$ and all $i=1,\dots,l$, we also know  $\bar \eta^{G}_i \geq 0$ for all $i\in I_{00}\cup I_{+0})$.  
    But then \eqref{eq:SU4} together with $(\bar \eta^{G}, \bar \eta^{H}) \neq 0$ contradicts MPVC-MFCQ at $x^*$ .
    
    Therefore, the sequence $\{(\eta^{G,k}, \eta^{H,k})\}$ is bounded and, at least on a subsequence,  converges to a limit $(\eta^{G,*}, \eta^{H,*})$.
    Reiterating the previous arguments proves that $(x^*,\eta^{G,*}, \eta^{H,*})$ is a  weakly stationary point.
\end{proof}

Consequently, even under the stronger assumption of MPVC-MFCQ, using only $\eps$-stationary points of the regularized problems, we cannot guarantee T-stationarity of the limit anymore.
This is analogous to the MPCC case discussed in \cite{KaS15}.
In this reference, two MPCC examples are provided to illustrate that in the inexact setting limits, which are only weakly stationary, can actually occur.
Those examples can also be translated into the MPVC setting.

Finally, we want to close our discussion of this local regularization method by again proving that the regularized problems locally inherit a constraint qualification from the MPVC.
However, since the vanishing constraint is relaxed only locally, we cannot expect strong CQs such as LICQ or MFCQ for the regularized problem if $I_{0+} \neq \emptyset$.

\begin{theorem}\label{thm:localStandardCQACQ}
    Let $x^*$ be feasible for the MPVC \eqref{eq:MPVC} such that MPVC-LICQ holds at $x^*$.
    Then there exists $\bar{t} > 0$ and a neighborhood $N(x^*)$ of $x^*$ such that, for all $t \in (0, \bar{t}]$ and all $x \in X^{SU}(t) \cap N(x^*)$, standard ACQ for $R^{SU}(t)$ is satisfied at $x$.
\end{theorem}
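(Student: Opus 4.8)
The plan is to reduce standard ACQ for $R^{SU}(t)$ to standard CPLD for $R^{SU}(t)$ and then invoke the implication CPLD $\Rightarrow$ ACQ proved in \cite{AMS05}. First I would use continuity of all the gradients and of the functions $G_i,H_i$ to fix $\bar t>0$ and a neighborhood $N(x^*)$ on which two things hold: (i) the MPVC-LICQ gradients of Definition \ref{def:cqs}(a) remain linearly independent, and (ii) the active indices localize correctly, namely $H_i(x)=0$ forces $i\in I_0$ and $G_i(x)=0$ forces $i\in I_{00}\cup I_{+0}$. The latter holds because, for $i$ outside these sets, the nonzero sign of $H_i(x^*)$ or $G_i(x^*)$ persists on a small enough $N(x^*)$ and precludes the corresponding function from vanishing.

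Next, for $x\in X^{SU}(t)\cap N(x^*)$ I would read off the active gradients from Lemma \ref{lem:localPropertiesPhi}. By that lemma an active vanishing constraint $\Phi_i^{SU}(x;t)=0$ occurs exactly when $\min\{G_i(x),H_i(x)\}=0$ and $|G_i(x)-H_i(x)|\ge t$, which splits into two disjoint cases (the degenerate possibility $G_i(x)=H_i(x)=0$ is excluded, since then $|G_i(x)-H_i(x)|=0<t$): either $H_i(x)=0$ and $G_i(x)\ge t$, in which case $G_i(x)-H_i(x)\ge t$ gives $\nabla\Phi_i^{SU}(x;t)=2\nabla H_i(x)$ and $i\in I_0$; or $G_i(x)=0$ and $H_i(x)\ge t$, in which case $G_i(x)-H_i(x)\le -t$ gives $\nabla\Phi_i^{SU}(x;t)=2\nabla G_i(x)$ and $i\in I_{00}\cup I_{+0}$. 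Together with the active constraints $H_i(x)\ge 0$, contributing $-\nabla H_i(x)$ for $i\in I_H(x)\subseteq I_0$, every active gradient is therefore a scalar multiple of one of the MPVC-LICQ gradients $\nabla H_i(x)$ ($i\in I_0$) or $\nabla G_i(x)$ ($i\in I_{00}\cup I_{+0}$).

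The heart of the argument is then the analysis of positive linear dependencies. Suppose a subset of the active gradients is positively linearly dependent at $x$. Since by (i) the underlying gradients $\nabla H_i(x)$ and $\nabla G_i(x)$ involved are linearly independent on $N(x^*)$, the net coefficient on each distinct direction must vanish. A gradient $2\nabla G_i(x)$ (second case above) appears with a single nonnegative coefficient, which is thus forced to zero, so these never participate nontrivially; the only way to obtain a nonzero combination is a cancellation on some direction $\nabla H_i(x)$, which requires the subset to contain both $-\nabla H_i(x)$ (from $H_i\ge 0$) and $2\nabla H_i(x)$ (from $\Phi_i^{SU}\le 0$) for the same index $i\in I_H(x)\cap I_\Phi(x,t)$. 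This is precisely the overlap responsible for the failure of MFCQ mentioned before the statement. The key observation is that such a pair $\{-\nabla H_i(x),\,2\nabla H_i(x)\}$ consists of scalar multiples of one another at \emph{every} point, so any subset containing it is linearly dependent throughout $N(x^*)$. Hence standard CPLD holds for $R^{SU}(t)$ at every $x\in X^{SU}(t)\cap N(x^*)$, and standard ACQ follows from \cite{AMS05}.

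I expect the main obstacle to be the careful bookkeeping in the localization step (ii) together with the case distinction from Lemma \ref{lem:localPropertiesPhi}: one must verify that the only positive dependencies arise from the harmless proportional pairs, and in particular that the second-type gradients $2\nabla G_i(x)$ with $i\in I_{00}\cup I_{+0}$ never enter a nontrivial relation. This is exactly what blocks the sharper qualifications LICQ and MFCQ while still leaving CPLD, and therefore ACQ, intact. The standard constraints $g,h$, suppressed here for brevity, enter only through the membership of their gradients in the MPVC-LICQ set and do not affect the dependence analysis.
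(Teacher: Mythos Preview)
Your approach via CPLD contains a genuine gap: CPLD for $R^{SU}(t)$ can fail at feasible points near $x^*$, so the implication CPLD $\Rightarrow$ ACQ is not available everywhere it is needed.

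First, the case split you extract from Lemma~\ref{lem:localPropertiesPhi} is incomplete. The lemma lists only four of the five sign configurations; it is silent on the case $\min\{G_i(x),H_i(x)\}>0$ with $|G_i(x)-H_i(x)|<t$. For $i\in I_{00}$ this case genuinely occurs on the curved part of the boundary $\{\Phi_i^{SU}(\cdot;t)=0\}$, where the gradient is $\alpha_i\nabla G_i(x)+\beta_i\nabla H_i(x)$ with both $\alpha_i,\beta_i\in(0,2)$. This omission is fixable (that gradient cannot participate nontrivially in a positive linear dependence, since nothing else contributes to the $\nabla G_i$ direction), but it is not what you claim.

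The decisive obstruction lies at the ``corner'' points: for $i\in I_{00}$, take $\hat x\in X^{SU}(t)\cap N(x^*)$ with $H_i(\hat x)=0$ and $G_i(\hat x)=t$. Such points exist arbitrarily close to $x^*$ (so no choice of $N(x^*)$ avoids them). At $\hat x$ one has $G_i(\hat x)-H_i(\hat x)=t$, hence $\nabla\Phi_i^{SU}(\hat x;t)=2\nabla H_i(\hat x)$, and the pair $\{-\nabla H_i(\hat x),\,\nabla\Phi_i^{SU}(\hat x;t)\}$ is positively linearly dependent. But for $y$ near $\hat x$ with $G_i(y)-H_i(y)<t$ one obtains
\[
\nabla\Phi_i^{SU}(y;t)=\bigl(1-\theta'(\tfrac{G_i(y)-H_i(y)}{t})\bigr)\nabla G_i(y)+\bigl(1+\theta'(\tfrac{G_i(y)-H_i(y)}{t})\bigr)\nabla H_i(y),
\]
and since $\theta'(s)<1$ for $s<1$ the coefficient on $\nabla G_i(y)$ is strictly positive. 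By MPVC-LICQ the vectors $\nabla G_i(y)$ and $\nabla H_i(y)$ are linearly independent near $x^*$, so $\{-\nabla H_i(y),\,\nabla\Phi_i^{SU}(y;t)\}$ is linearly \emph{independent} at such $y$. Hence CPLD fails at $\hat x$. (In the concrete model $G_i(x)=x_1$, $H_i(x)=x_2$, $x^*=0$, take $\hat x=(t,0)$: one checks directly that ACQ holds there while CPLD does not.)

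The paper's proof sidesteps exactly this difficulty: instead of CPLD it builds an auxiliary problem in which, for every $i\in I_H(\hat x)\cap I_\Phi(\hat x,t)$, the constraint $H_i(x)\ge 0$ is tightened to the \emph{equality} $H_i(x)=0$, proves LICQ for that auxiliary problem, and then shows its linearized cone coincides with $\Li_{X^{SU}(t)}(\hat x)$ while its feasible set sits locally inside $X^{SU}(t)$. This equality-tightening is what absorbs the dangerous pair $\{-\nabla H_i,\,2\nabla H_i\}$ into a single gradient and makes the argument go through at the corner points where your CPLD route breaks down.
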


\begin{proof}
    By continuity, there exists a neighborhood $N(x^*)$ of $x^*$ and a $\bar t > 0$ such that for all $t \in (0,\bar t]$ and all $x \in N(x^*) \cap X^{SU}(t)$ we have the inclusions
    \begin{equation}\label{eq:localACQinclusions}
           I_H(x) \subseteq I_{0}, \quad
           I_G(x) \subseteq I_{+0} \cup I_{00}, \quad
           I_{\Phi}(x,t) \subseteq I_{00} \cup I_{0+} \cup I_{+0}.
    \end{equation}
    Shrinking $\bar t$ and $N(x^*)$ if necessary, we can also achieve
    \begin{equation}\label{eq:localACQinclusions2}
        I_{0+} \subseteq I_\Phi(x,t) \cap I_{H}(x)
    \end{equation}
    for all $t \in (0,\bar t]$ and all $x \in N(x^*) \cap X^{SU}(t)$.
    
    Now consider an arbitrary $t \in (0,\bar t]$ and  $\hat x \in N(x^*) \cap X^{SU}(t)$.
    We define the auxiliary problem NLP$(\hat{x})$ by
    \begin{eqnarray*}
       \min f(x) & \st & H_i(x) \geq 0 \quad (i \in I_{H}(\hat{x}) \setminus I_{\Phi}(\hat{x},t)), \\
       && H_i(x) = 0 \quad (i \in  I_{\Phi}(\hat{x},t) \cap I_{H}(\hat{x})), \\ 
       && \Phi_i^{SU}(x;t) \leq 0 \quad (i \in  I_{\Phi}(\hat{x},t) \setminus I_{H}(\hat{x})),
    \end{eqnarray*}
    and denote its feasible region by $\hat{X}$.
    Then, clearly, $\hat{x} \in \hat{X}$.
    
    Our next step is to prove that LICQ for NLP$(\hat{x})$ holds in $\hat x$.
    To this end, note that the gradients of the active constraints are 
    \begin{eqnarray*}
        \nabla H_i(\hat x) && (i \in I_H(\hat x)  \subseteq I_0), \\
        \alpha_i \nabla G_i(\hat x) + \beta_i \nabla H_i(\hat x) && (i \in I_\Phi(\hat x,t) \setminus I_H(\hat x) \subseteq I_{00} \cup I_{+0}),
    \end{eqnarray*}
    with $\alpha_i,\beta_i$ given by Lemma \ref{lem:localPropertiesPhi}.
    Here, we used $I_{0+} \subseteq I_\Phi(\hat x,t) \cap I_{H}(\hat x)$ by the choice of $\bar t$ and $N(x^*)$.
    Consequently, if we choose $N(x^*)$ small enough, MPVC-LICQ implies linear independence of the above gradients and thus LICQ.
    
    Since LICQ implies ACQ for NLP$(\hat x)$, we thus also know $\T_{\hat X}(\hat x) = \Li_{\hat X}(\hat x)$, where
    \begin{eqnarray*}
        \Li_{\hat{X}}(\hat{x}) =   \{ d \in \R^n \mid & \nabla H_i(\hat{x})^T d \geq 0 & (i \in I_H(\hat{x}) \setminus I_{\Phi}(\hat{x},t) ),  \\
        &\nabla H_i(\hat{x})^T d = 0 & (i \in I_H(\hat{x}) \cap I_{\Phi}(\hat{x},t) ),\\
        & (\alpha_i \nabla G_i(\hat x) + \beta_i \nabla H_i(\hat x))^T d \leq 0 & (i \in I_{\Phi}(\hat{x},t) \setminus I_H(\hat{x})  )\} .
    \end{eqnarray*}
    
    If we choose $r > 0$ sufficiently small, then $ \hat X \cap B_r(\hat x) \subseteq X^{SU}(t)$, because for all $i \notin I_H(\hat x)$ we have $H_i(\hat x) > 0$, for all $i \notin I_\Phi(\hat x,t)$ we have $\Phi^{SU}_i(\hat x; t) < 0$ and for all $i \in I_\Phi(\hat x,t) \cap I_H(\hat x)$ the constraint $H_i(x) = 0$ implies $\Phi^{SU}_i(x;t) \leq 0$ by Lemma \ref{lem:localPropertiesPhi}.
    This implies $\Li_{\hat X}(\hat x) =\T_{\hat X}(\hat x) \subseteq \T_{X^{SU}(t)} (\hat x)$.

    To complete the proof, it remains to observe that $\Li_{X^{SU}(t)} (\hat x) = \Li_{\hat X}(\hat x)$, because
    \begin{eqnarray*}
        \Li_{X^{SU}(t)} (\hat x) =   \{ d \in \R^n \mid & \nabla H_i(\hat{x})^T d \geq 0 & (i \in I_H(\hat{x}) ),  \\
        & (\alpha_i \nabla G_i(\hat x) + \beta_i \nabla H(\hat x))^T d \leq 0 & (i \in I_{\Phi}(\hat{x},t) )\}
    \end{eqnarray*}
    where for all $i \in I_H(\hat{x}) \cap I_{\Phi}(\hat{x},t)$ we have $|G_i(\hat x)| \geq t$ and thus $(\alpha_i, \beta_i) = (0,2)$ by Lemma \ref{lem:localPropertiesPhi}. 
\end{proof}
\subsection{The L-shaped Regularization Scheme}\label{ssec:LshapedRegularization}

In this section  we study the so-called $L$-shaped regularization introduced by Kanzow and Schwartz in \cite{KaS13} for MPCCs and adapted for MPVCs in \cite{HKS12new}.
For $t>0$, it is  given by

\begin{minipage}[c]{0.5\textwidth}
    \begin{align} 
        \min \ & f(x) \nonumber\\ 
        \st \ & g_i(x) \leq 0 \  (i = 1,\ldots,m), \nonumber  \\ 
         & h_j(x) = 0 \ (j = 1,\ldots,p), \tag{$R^{KS}(t)$} \nonumber \\ 
         & H_i(x) \geq 0 \ (i = 1,\ldots,l),  \nonumber \\ 
         & \Phi^{KS}_i(x;t) \leq 0 \  (i = 1,\ldots,l),  \nonumber 
    \end{align}
\end{minipage}
\qquad
\begin{minipage}[c]{0.45\textwidth}
    \begin{tikzpicture}[scale = 1.25]
        \fill[blue!30] (-1.5,0) -- (0,0) -- (0,2) -- (-1.5,2) -- cycle;
        \fill[blue!30]  (0,0) -- (2,0) -- (2,0.5) -- (0,0.5) -- cycle;
        \draw[->] (-1.75,0) -- (2.5,0) node[below]{$G_i(x)$};
        \draw[->] (0,-0.5)  -- (0,0.5) node[left]{$t$} -- (0,2.5) node[left]{$H_i(x)$};
        \draw[line width = 1.5pt, blue] (-1.5,0) -- (0,0) -- (2,0);
        \draw[line width = 1.5pt, blue] (0,2) -- (0,0.5) -- (2,0.5);
        \draw[->] (0.5,1.5) node[right]{$\Phi^{KS}_i(x;t)=0$} -- (0,0.75);
    \end{tikzpicture}
\end{minipage}

\noindent
with $\Phi_i^{KS}: \R^n \rightarrow \R$ being defined as the once continuously differentiable function
$$
    \Phi_i^{KS} (x;t) = 
    \begin{cases}
        G_i(x)(H_i(x) - t) & \text{if } G_i(x) + H_i(x) \geq t, \\
        -\frac{1}{2} [G_i(x)^2 + (H_i(x)-t)^2] & \text{if } G_i(x) + H_i(x) < t.
    \end{cases}
$$
We denote the feasible set of $R^{KS}(t)$ by  $X^{KS}(t)$.  

Contrary to the two previously considered regularization schemes, this approach guarantees M-stationarity and not just T-stationarity of limits in the exact case.

\begin{theorem}[{{\cite[Theorem~4.1]{HKS12new}}}]
\label{thm:LconvergenceExactCPLD}
     Let $\{t_k\} \downarrow 0$ and let  $\{x^k\}$ be a sequence of KKT points of $R^{KS}(t_k)$with $x^k\to x^*$ such that MPVC-CPLD holds at $x^*$.
    Then $x^*$ is an M-stationary point of the MPVC \eqref{eq:MPVC}.
\end{theorem}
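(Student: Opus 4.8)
The plan is to mirror the structure of the proofs of Theorem~\ref{thm:globalConvergenceEpsMFCQ} and Theorem~\ref{thm:localConvergenceEpsMFCQ}, but to exploit the axis-aligned geometry of the $L$-shaped feasible set to upgrade the conclusion from T- to M-stationarity. First I would write the KKT conditions of $R^{KS}(t_k)$ at $x^k$: there are multipliers $\nu^k,\delta^k\in\R^l$ (and, suppressed per the paper's convention, $\lambda^k,\mu^k$ for the standard constraints) with $\nu_i^k,\delta_i^k\geq 0$, complementarity $\nu_i^k H_i(x^k)=0$ and $\delta_i^k\Phi_i^{KS}(x^k;t_k)=0$, and the stationarity $\nabla f(x^k)-\sum_i\nu_i^k\nabla H_i(x^k)+\sum_i\delta_i^k\nabla\Phi_i^{KS}(x^k;t_k)=0$. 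Splitting $\nabla\Phi_i^{KS}$ into its two branches and collecting coefficients, I set, in the branch $G_i+H_i\geq t_k$, $\eta_i^{G,k}:=\delta_i^k(H_i(x^k)-t_k)$ and $\eta_i^{H,k}:=\nu_i^k-\delta_i^k G_i(x^k)$, and in the branch $G_i+H_i<t_k$, $\eta_i^{G,k}:=-\delta_i^k G_i(x^k)$ and $\eta_i^{H,k}:=\nu_i^k+\delta_i^k(H_i(x^k)-t_k)$, so that stationarity becomes $\nabla f(x^k)-\sum_i\eta_i^{H,k}\nabla H_i(x^k)+\sum_i\eta_i^{G,k}\nabla G_i(x^k)=0$.

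The conceptual heart, and the reason M-stationarity (rather than merely T-stationarity as for $R^S(t)$) is attainable, is a purely finite-level observation: for every index $i$ and every $k$ one has $\eta_i^{G,k}\eta_i^{H,k}=0$. Indeed, if $\delta_i^k=0$ then $\eta_i^{G,k}=0$; and if $\delta_i^k>0$ then complementarity forces $\Phi_i^{KS}(x^k;t_k)=0$, which on the branch $G_i+H_i\geq t_k$ means $G_i(x^k)(H_i(x^k)-t_k)=0$, i.e.\ either $G_i(x^k)=0$ (whence $\eta_i^{H,k}=\nu_i^k=0$, since then $H_i(x^k)\geq t_k>0$ and complementarity kills $\nu_i^k$) or $H_i(x^k)=t_k$ (whence $\eta_i^{G,k}=0$), while on the branch $G_i+H_i<t_k$ the zero level of $\Phi_i^{KS}$ is only the corner $G_i(x^k)=0,\,H_i(x^k)=t_k$, where $\nabla\Phi_i^{KS}=0$ and again $\eta_i^{G,k}=0$. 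This axis-alignment of the active boundary --- the product rule leaving only $\nabla G_i$ active along $\{G_i=0\}$ and only $\nabla H_i$ active along $\{H_i=t_k\}$ --- is precisely what the global scheme's hyperbolic boundary $\{G_iH_i=t\}$ lacks. Running the same case distinction for $i$ in each of $I_{+-},I_{0-},I_{0+},I_{+0},I_{00}$, using $G_i(x^k)\to G_i(x^*)$, $H_i(x^k)\to H_i(x^*)$ and $t_k\downarrow 0$, yields for $k$ large the weak-stationarity pattern $\supp(\eta^{G,k})\subseteq I_{00}\cup I_{+0}$ with $\eta_i^{G,k}\geq 0$ there, and $\supp(\eta^{H,k})\subseteq I_0$ with $\eta_i^{H,k}\geq 0$ on $I_{0-}$.

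The main obstacle is the boundedness of $\{(\eta^{G,k},\eta^{H,k})\}$, which under MPVC-CPLD cannot be obtained by the direct normalize-and-contradict argument of Theorem~\ref{thm:localConvergenceEpsMFCQ}, since CPLD permits positive linear dependence at $x^*$. Here I would invoke a sign-compatible Carath\'eodory reduction: at each $x^k$ one may replace the representation by one whose gradients carrying nonzero multipliers are linearly independent, retaining $\lambda^k\geq 0$, $\eta_i^{G,k}\geq 0$ on $I_{+0}\cup I_{00}$ and $\eta_i^{H,k}\geq 0$ on $I_{0-}$, treating the free multipliers $\mu^k$ and $\eta_i^{H,k}$ $(i\in I_{0+}\cup I_{00})$ linearly, and --- crucially --- only shrinking the support. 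Because the support can only shrink, both the finite-level identities $\eta_i^{G,k}\eta_i^{H,k}=0$ $(i\in I_{00})$ and the sign/support conditions are inherited by the reduced multipliers. If these were unbounded, I would normalize by their norm, pass to a subsequence along which the (finite) support is a fixed set $\hat I$ and the normalized multipliers converge to a nonzero limit; since $\nabla f(x^k)$ stays bounded, the limit is a nontrivial positive linear dependence, with signs matching Definition~\ref{def:cqs}(d), of the $\hat I$-gradients at $x^*$. MPVC-CPLD then forces these gradients to remain linearly dependent throughout $N(x^*)$, in particular at $x^k$ for large $k$, contradicting their linear independence there.

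Hence the reduced multipliers are bounded, converge on a subsequence to some $(\eta^{G,*},\eta^{H,*})$, and the limit satisfies the weak-stationarity relation at $x^*$ together with all sign and support conditions carried over from the finite level; passing $\eta_i^{G,k}\eta_i^{H,k}=0$ to the limit yields $\eta_i^{G,*}\eta_i^{H,*}=0$ for all $i\in I_{00}$, so $x^*$ is M-stationary. I expect the delicate bookkeeping to lie precisely in Step three: verifying that the Carath\'eodory step is simultaneously compatible with the conic (sign-constrained) part $\{\nabla g_i,\,-\nabla H_i\ (I_{0-}),\,\nabla G_i\ (I_{+0}\cup I_{00})\}$ and the linear (free) part $\{\nabla h_i,\,\nabla H_i\ (I_{0+}\cup I_{00})\}$ dictated by the MPVC-CPLD index partition, while never enlarging the support.
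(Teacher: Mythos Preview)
The paper does not actually prove this theorem; it is quoted verbatim from \cite[Theorem~4.1]{HKS12new} and stated without proof. Your proposal is correct and follows precisely the template the paper uses for the analogous result, Theorem~\ref{thm:NonsmoothconvergenceExactCPLD} (the nonsmooth regularization): define $\eta^{G,k},\eta^{H,k}$ from the KKT multipliers, apply the Carath\'eodory-type reduction \cite[Lemma~A.1]{StU10}, contradict unboundedness via MPVC-CPLD, and read off M-stationarity from the structure of the finite-level multipliers.

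Your M-stationarity argument is in fact slightly cleaner than the paper's treatment of the analogous step in Theorem~\ref{thm:NonsmoothconvergenceExactCPLD}. There, the paper argues a posteriori: if $\eta_i^{G,*}>0$ for some $i\in I_{00}$, then $\eta_i^{G,k}>0$ for large $k$, and since the reduction does not enlarge supports one can trace back to the original $\delta_i^k(H_i(x^k)-t_k)>0$, whence $G_i(x^k)=0$ and $\nu_i^k=0$, so $\eta_i^{H,k}=0$. You instead establish the orthogonality $\eta_i^{G,k}\eta_i^{H,k}=0$ \emph{before} the reduction, observe that it survives the reduction because supports only shrink, and pass it directly to the limit. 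Both routes are valid; yours makes the geometric reason (the axis-aligned active boundary of $X^{KS}(t)$) more transparent, while the paper's version avoids stating the product identity explicitly at the cost of a slightly more convoluted endgame.

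One small point worth making explicit in a final write-up: in the second branch $G_i(x^k)+H_i(x^k)<t_k$, the identity $\Phi_i^{KS}(x^k;t_k)=0$ has no solution (the unique zero of $-\tfrac12[G_i^2+(H_i-t_k)^2]$ satisfies $G_i+H_i=t_k$), so $\delta_i^k>0$ is impossible there; you state this correctly but the phrasing ``the zero level \ldots\ is only the corner'' could be misread as placing that corner inside the open branch.
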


It was also proven in \cite{HKS12new} that the regularized problems satisfy standard GCQ locally under suitable assumptions.
Since the regularized problems retain the kink of the feasible set in $(G_i(x),H_i(x)) = (0,t)$, one cannot hope for a stronger CQ to be satisfied in all of $ X^{KS}(t)$.

\begin{theorem}[{{\cite[Theorem~4.7, 4.9]{HKS12new}}}] \label{thm:LstandardCQGCQ}
    Let $x^*$ be feasible for MPVC \eqref{eq:MPVC} and MPVC-LICQ hold in $x^*$.
    Then there exists $\bar{t} > 0$ and a neighborhood $N(x^*)$ of $x^*$ such that, for all $t \in (0, \bar{t}]$ and all $x \in X^{KS}(t) \cap N(x^*)$, standard GCQ for $R^{KS}(t)$ is satisfied at $x$.
    If additionally $(G_i(x),H_i(x)) \neq (0,t)$ for all $i=1,\ldots,l$, then standard LICQ holds at $x$.
\end{theorem}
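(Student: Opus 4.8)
The plan is to mirror the structure of the proof of Theorem~\ref{thm:localStandardCQACQ}: first localize the active sets by continuity, then differentiate the regularizing function, and finally split the analysis at a feasible $\hat x \in X^{KS}(t) \cap N(x^*)$ into a benign case (standard LICQ) and the genuinely degenerate case occurring at the reentrant corner of the L (only GCQ). Shrinking $N(x^*)$ and $\bar t$, I would arrange the inclusions $I_H(\hat x) \subseteq I_0$, that every index with $\Phi_i^{KS}(\hat x;t)=0$ lies in $I_{00}\cup I_{0+}\cup I_{+0}$, and --- crucially --- that the corner configuration $(G_i(\hat x),H_i(\hat x))=(0,t)$ can only occur for $i\in I_{00}$, since $H_i(\hat x)=t\downarrow 0$ forces $H_i(x^*)=0$ and $G_i(\hat x)=0$ forces $G_i(x^*)=0$. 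Differentiating the two branches yields $\nabla\Phi_i^{KS}(\hat x;t)=(H_i(\hat x)-t)\nabla G_i(\hat x)+G_i(\hat x)\nabla H_i(\hat x)$ on $\{G_i+H_i\ge t\}$ and $\nabla\Phi_i^{KS}(\hat x;t)=-G_i(\hat x)\nabla G_i(\hat x)-(H_i(\hat x)-t)\nabla H_i(\hat x)$ on $\{G_i+H_i< t\}$; the formulas agree on the seam (confirming $C^1$ regularity) and both vanish exactly at $(G_i(\hat x),H_i(\hat x))=(0,t)$.

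A short case check shows that for each $i$ at most one of the constraints $H_i\ge 0$, $\Phi_i^{KS}\le 0$ is active at $\hat x$, except at a corner index, where $H_i(\hat x)=t>0$ leaves only $\Phi_i^{KS}$ active but with vanishing gradient. Away from corners, every active $\Phi$-index contributes a \emph{nonzero} multiple of $\nabla H_i$ (for $i\in I_{0+}$, horizontal branch) or of $\nabla G_i$ (for $i\in I_{+0}$), and bi-active non-corner indices $i\in I_{00}$ contribute a nonzero combination of $\nabla G_i,\nabla H_i$; each active $H$-index contributes $\nabla H_i$ with $i\in I_0$. In the sub-case $(G_i(\hat x),H_i(\hat x))\neq(0,t)$ for all $i$, all these gradients are vectors from the MPVC-LICQ list (up to positive scalars), so MPVC-LICQ gives their linear independence, i.e.\ standard LICQ for $R^{KS}(t)$ at $\hat x$. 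This proves the second assertion.

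For the remaining corner indices $\nabla\Phi_i^{KS}(\hat x;t)=0$, so ACQ must fail (the linearized constraint is vacuous while the L-shaped reentrant corner has a non-convex tangent cone); the content of the theorem is that GCQ nonetheless survives. Here I would use a finite decomposition: locally near $\hat x$ the L-shape is $\{H_i\ge 0\}\cap(\{H_i\le t\}\cup\{G_i\le 0\})$ at each corner index, so $X^{KS}(t)\cap N(x^*)$ is the union, over all selections $S$ of a branch ($H_i\le t$ versus $G_i\le 0$) at each corner, of finitely many sets $Z_S$ cut out by smooth constraints. One checks that $\hat x\in Z_S$ and $Z_S\subseteq X^{KS}(t)$ for every $S$. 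Since the selected branch gradient is $\nabla H_i$ or $\nabla G_i$ with $i\in I_{00}$, both in the MPVC-LICQ list, each $Z_S$ satisfies LICQ, hence ACQ, giving $\T_{Z_S}(\hat x)=\Li_{Z_S}(\hat x)$. Because the tangent cone of a finite union is the union of the tangent cones, taking polars yields $\T_{X^{KS}(t)}(\hat x)^\circ=\bigcap_S \Li_{Z_S}(\hat x)^\circ$.

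The main obstacle I anticipate is the final matching $\bigcap_S \Li_{Z_S}(\hat x)^\circ=\Li_{X^{KS}(t)}(\hat x)^\circ$, where the right-hand cone is generated by the active gradients of $R^{KS}(t)$ and each corner $\Phi_i$ contributes nothing. The inclusion $\supseteq$ is easy, since each $Z_S$ has a larger active-gradient set. For $\subseteq$ I would exploit that distinct selections use $\nabla H_i$ versus $\nabla G_i$ for the same corner index $i$: if $s$ lies in every $\Li_{Z_S}(\hat x)^\circ$, writing $s$ in two selections differing only at $i$ forces $\lambda\nabla H_i(\hat x)=\mu\nabla G_i(\hat x)$, and MPVC-LICQ then compels $\lambda=\mu=0$, so the corner gradients drop out and $s$ already lies in the cone generated by the non-corner gradients, namely $\Li_{X^{KS}(t)}(\hat x)^\circ$. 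This is precisely the step where the vanishing of $\nabla\Phi_i^{KS}$ at the corner is compensated by the reentrant geometry, and where MPVC-LICQ is indispensable to decouple distinct indices. Combining the two identities gives $\T_{X^{KS}(t)}(\hat x)^\circ=\Li_{X^{KS}(t)}(\hat x)^\circ$, i.e.\ GCQ, completing the proof.
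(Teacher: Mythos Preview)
Your proposal is sound, but note that the paper does not actually prove this theorem: it is cited verbatim from \cite[Theorems~4.7, 4.9]{HKS12new} and no proof is given here. That said, your argument follows precisely the template the paper \emph{does} carry out in full for the analogous result on the nonsmooth regularization (Theorem~\ref{thm:nonsmoothStandardCQGCQ}): localize the active index sets, decompose the feasible set near $\hat x$ as a finite union of branch sets $Z_S$ (one selection per corner index), verify LICQ on each $Z_S$ via MPVC-LICQ, use $\T_{X^{KS}(t)}(\hat x)=\bigcup_S \T_{Z_S}(\hat x)$ and polarity to obtain $\T_{X^{KS}(t)}(\hat x)^\circ=\bigcap_S \Li_{Z_S}(\hat x)^\circ$, and finally exploit the linear independence of $\nabla G_i(\hat x),\nabla H_i(\hat x)$ to show that any $s$ in this intersection cannot depend on the corner gradients, whence $s\in \Li_{X^{KS}(t)}(\hat x)^\circ$. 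Your handling of the LICQ sub-case (no corner indices) is also correct and matches the second assertion of the theorem.

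One small point worth making explicit in a write-up: the claim that the tangent cone of a finite union equals the union of the tangent cones relies on a pigeonhole/subsequence argument (any approximating sequence in $\bigcup_S Z_S$ has a subsequence entirely in one $Z_S$), which you use implicitly; the paper's proof of Theorem~\ref{thm:nonsmoothStandardCQGCQ} spells this out. Otherwise your outline is complete and matches the approach in the cited reference and in the paper's parallel proof.
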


However, similarly to the local regularization, the favourable convergence properties are lost, if one computes only $\eps$-stationary points of the regularized problems.

\begin{theorem}\label{thm:LconvergenceEpsMFCQ}
    Let $\{t_k\}, \{\eps_k\}  \downarrow 0$, and let $\{x^{k}\}$ be a sequence of $\eps_k$-stationary points of $R^{KS}(t_{k})$ with $x^k \rightarrow x^*$ such MPVC-MFCQ holds at $x^{*}$.
    Assume furthermore that 
    \begin{equation}\label{eq:LepsLimsup}
        \liminf_{k\to\infty} \frac{\eps_k}{G_i(x^k)}\leq 0\quad (i\in I_{00})
    \end{equation}
    whenever this is well-defined.
    Then $x^{*}$ is a weakly stationary point of \eqref{eq:MPVC}.
\end{theorem}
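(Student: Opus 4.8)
The plan is to follow the same blueprint as the proof of Theorem~\ref{thm:localConvergenceEpsMFCQ}, adapted to the piecewise structure of $\Phi_i^{KS}$. First I would write out the $\eps_k$-stationarity conditions: there exist $\nu^k,\delta^k\in\R^l$ with $\|\nabla f(x^k) - \sum_{i=1}^l \nu_i^k\nabla H_i(x^k) + \sum_{i=1}^l \delta_i^k\nabla\Phi_i^{KS}(x^k;t_k)\|_\infty\le\eps_k$, together with $H_i(x^k)\ge-\eps_k$, $\nu_i^k\ge-\eps_k$, $|\nu_i^kH_i(x^k)|\le\eps_k$, $\Phi_i^{KS}(x^k;t_k)\le\eps_k$, $\delta_i^k\ge-\eps_k$ and $|\delta_i^k\Phi_i^{KS}(x^k;t_k)|\le\eps_k$. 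Continuity of $\Phi_i^{KS}$ immediately yields feasibility of $x^*$. Differentiating the two branches gives $\nabla\Phi_i^{KS}(x^k;t_k)=a_i^k\nabla G_i(x^k)+b_i^k\nabla H_i(x^k)$ with $(a_i^k,b_i^k)=(H_i(x^k)-t_k,\,G_i(x^k))$ on the branch $G_i(x^k)+H_i(x^k)\ge t_k$ and $(a_i^k,b_i^k)=(-G_i(x^k),\,t_k-H_i(x^k))$ otherwise. Setting $\eta_i^{H,k}:=\nu_i^k-\delta_i^k b_i^k$ and $\eta_i^{G,k}:=\delta_i^k a_i^k$ recasts the stationarity estimate as $\|\nabla f(x^k)-\sum_i\eta_i^{H,k}\nabla H_i(x^k)+\sum_i\eta_i^{G,k}\nabla G_i(x^k)\|_\infty\le\eps_k$.

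Next I would establish boundedness of $\{(\eta^{G,k},\eta^{H,k})\}$ by contradiction: assuming the norms diverge, normalize, pass to a nonzero limit $(\bar\eta^G,\bar\eta^H)$, and obtain the dependence relation $\sum_i\bar\eta_i^H\nabla H_i(x^*)=\sum_i\bar\eta_i^G\nabla G_i(x^*)$ (the $f$-term vanishes after dividing by the diverging norm). Both the boundedness step and the final limit passage rely on the same index-set bookkeeping, organized by the branch of $\Phi^{KS}$ active near $x^*$. For $i\in I_{0+}$ the point sits on the branch $G_i+H_i\ge t_k$ with $a_i^k=H_i(x^k)-t_k\to0$, and complementarity $|\delta_i^k G_i(x^k)(H_i(x^k)-t_k)|\le\eps_k$ together with $G_i(x^*)>0$ forces $\eta_i^{G,k}=\delta_i^k(H_i(x^k)-t_k)\to0$; for $i\in I_{0-}$ and $i\in I_{+-}$ one has $\Phi_i^{KS}(x^*;0)<0$, so $\delta_i^k\to0$ and both $\eta_i^{G,k},\eta_i^{H,k}\to0$; for $i\in I_{+0}$ the bound $|\delta_i^kG_i(x^k)|\le\eps_k/|H_i(x^k)-t_k|\to0$ annihilates $\eta_i^{H,k}$. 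This yields $\supp(\bar\eta^G)\subseteq I_{00}\cup I_{+0}$ and $\supp(\bar\eta^H)\subseteq I_0$, matching exactly the gradients appearing in MPVC-MFCQ.

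The crux is the sign bookkeeping on $I_{00}$, which is where hypothesis \eqref{eq:LepsLimsup} must enter. After passing to a common subsequence realizing the $\liminf$ in \eqref{eq:LepsLimsup} for each of the finitely many $i\in I_{00}$ (and, along it, fixing the active branch and the sign of $G_i(x^k)$), I would split cases. When $G_i(x^k)\le0$, or when $\delta_i^k<0$ so that $|\delta_i^k|\le\eps_k\to0$, the inequality $\eta_i^{G,k}\ge0$ in the limit comes out directly from $\delta_i^k\ge-\eps_k$ and $a_i^k\to0$. The dangerous subcase is $\delta_i^k>0$ with $G_i(x^k)>0$: here complementarity gives $|\eta_i^{G,k}|\le C\eps_k/G_i(x^k)$, and \eqref{eq:LepsLimsup} forces $\eps_k/G_i(x^k)\to0$ along the chosen subsequence whenever $G_i(x^k)>0$, driving $\eta_i^{G,k}\to0$. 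Hence $\bar\eta_i^G\ge0$ for $i\in I_{00}$; the analogous but easier estimates give $\bar\eta_i^G\ge0$ for $i\in I_{+0}$ and $\bar\eta_i^H\ge0$ for $i\in I_{0-}$. These support and sign properties contradict positive linear independence under MPVC-MFCQ, so $\{(\eta^{G,k},\eta^{H,k})\}$ is bounded; passing to a convergent subsequence and repeating the same estimates on the unnormalized multipliers produces a limit $(\eta^{G,*},\eta^{H,*})$ certifying weak stationarity of $x^*$.

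I expect the main obstacle to be precisely this $I_{00}$ analysis: isolating the single subcase ($\delta_i^k>0$, $G_i(x^k)>0$) in which the sign of $\eta_i^{G,k}$ is not automatic, and verifying that \eqref{eq:LepsLimsup} is exactly what controls $\eps_k/G_i(x^k)$ there, while simultaneously coping with the branch switching of $\Phi_i^{KS}$ (which may occur infinitely often along the sequence) by reducing to subsequences on which the active branch and the sign of $G_i(x^k)$ are fixed. Everything else mirrors, step for step, the local regularization argument of Theorem~\ref{thm:localConvergenceEpsMFCQ}.
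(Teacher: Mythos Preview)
Your proposal follows the paper's proof essentially step for step: define $\eta^{G,k},\eta^{H,k}$ from the two branches of $\nabla\Phi_i^{KS}$, prove boundedness by contradiction via MPVC-MFCQ, and pass to the limit reusing the same support and sign estimates. One slip to fix: for $i\in I_{0-}$ you assert $\eta_i^{H,k}\to0$, but $\eta_i^{H,k}=\nu_i^k+o(1)$ and $\nu_i^k$ need not vanish since $H_i(x^*)=0$; from $\nu_i^k\ge-\eps_k$ you only get $\liminf_k\eta_i^{H,k}\ge0$, which is precisely what is needed (and is in fact simpler than the paper's separate contradiction argument for that index set). Your device of first extracting a common subsequence realizing the $\liminf$ in \eqref{eq:LepsLimsup} for each $i\in I_{00}$, and then doing all further subsequence extractions inside it, is a clean way to make the use of that hypothesis rigorous; the paper argues by direct contradiction instead, but the content is the same.
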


\begin{proof}
    Since $x^k$ are $\eps_k$-stationary points of $R^{KS}(t_k)$, there exist multipliers $(\nu^k,\delta^k)$ such that
    \begin{eqnarray}
        && \left\|\nabla f(x^k)- \sum_{i=1}^k\nu_i^k \nabla H_i(x^k) + \sum_{i=1}^l \delta_i^k \nabla \Phi_i^{KS}(x^k; t_k)\right\|_\infty \leq \eps_k, \label{eq:L1} \\
        && \nu_i^k \geq -\eps_k, \quad H_i(x^k) \geq -\eps_k, \quad |\nu^k_i H_i(x^k)| \leq \eps_k \quad (i = 1,\ldots, l), \label{eq:Mult1}\\
        && \delta_i^k \geq -\eps_k, \quad \Phi_i^{KS}(x^k; t_k) \leq \eps_k, \quad |\delta_i^k \Phi_i^{KS}(x^k; t_k)| \leq \eps_k \quad (i = 1,\ldots, l). \label{eq:Mult2}
    \end{eqnarray}
    Here, the gradients of $\Phi_i^{KS}$ are given by 
    \[
        \nabla \Phi_i^{KS}(x^k ; t_k) =
        \begin{cases}
            (H_i(x^k) - t_k) \nabla G_i(x^k) + G_i(x^k)\nabla H_i(x^k) & \text{if } G_i(x^k) + H_i(x^k) \geq t_k, \\
            -G_i(x^k) \nabla G_i(x^k) - (H_i(x^k) - t_k)\nabla H_i(x^k) & \text{if } G_i(x^k) + H_i(x^k) < t_k.
        \end{cases}
    \]
    
    We now define multipliers $\eta^{G,k}, \eta^{H,k} \in \R^l$ component-wise  by
    \begin{align*}
        &\eta^{G,k}_i :=
        \begin{cases}
            \delta_i^k (H_i(x^k) - t_k) & \text{if } G_i(x^k) + H_i(x^k) \geq t_k, \\
            - \delta_i^k G_i(x^k) & \text{if } G_i(x^k) + H_i(x^k) < t_k,
        \end{cases} \\
        &\eta_i^{H,k} :=
        \begin{cases}
            \nu_i^k - \delta_i^k G_i(x^k) & \text{if } G_i(x^k) + H_i(x^k) \geq t_k, \\
            \nu_i^k + \delta_i^k (H_i(x^k) - t_k)& \text{if } G_i(x^k) + H_i(x^k) < t_k.
        \end{cases}
    \end{align*}
    Using these new multipliers, we can re-write \eqref{eq:L1} as
    \begin{align}\label{eq:L2}
        \left\|\nabla f(x^k) - \sum_{i=1}^l \eta^{H,k}_i \nabla H_i(x^k) + \sum_{i=1}^l \eta^{G,k}_i \nabla G_i(x^k) \right\|_{\infty} \leq \eps_k. 
    \end{align}
    
    We claim that the sequence $\{(\eta^{G,k}, \eta^{H,k})\}$ is bounded.
    If the sequence were unbounded, then w.l.o.g. we have 
    \begin{equation}\label{eq:BddConv}
        \frac{(\eta^{G,k},\eta^{H,k})}{\|(\eta^{G,k},\eta^{H,k})\|} \to (\bar{\eta}^G, \bar{\eta}^H) \neq 0.
    \end{equation}
    Therefore, dividing \eqref{eq:L2} by $\|(\eta^{G,k},\eta^{H,k})\|$ and passing to the limit we obtain
    \begin{align}\label{eq:L3}
        0 = \sum_{i=1}^l \bar{\eta}_i^G \nabla G_i(x^{*}) - \sum_{i=1}^l \bar{\eta}^{H}_i\nabla H_i(x^{*}).    
    \end{align}
    We now determine the support of the multipliers:
    For all  $i \in I_{0+}$ we have $G_i(x^k) + H_i(x^k) \geq t_k$ for all $k$ sufficiently large.
    By the definition of $\eta_i^{G,k}$ and \eqref{eq:Mult2} we hence have
    \[
        G_i(x^k)\eta_i^{G,k} = G_i(x^k) \delta_i^k (H_i(x^k) - t_k) = \delta_i^k \Phi_i^{KS}(x^k;t_k) \to  0, 
    \]
    Since $G_i(x^*)>0$, this implies  $\eta_i^{G,k}\to 0$ and hence $\bar{\eta}^G_i = 0$.
    For all $i \in I_{0-}\cup I_{+-}$, we have $\lim_{k\to \infty} \Phi_i^{KS}(x^k;t_k) \neq 0$, and hence $\delta_i^k\to 0$ due to  \eqref{eq:Mult2}.
    Therefore, $\eta_i^{G,k}\to 0$ and hence $\bar{\eta}^G_i = 0$.
    All in all, we have proven $\supp(\bar{\eta}^{G}) \subseteq I_{00} \cup I_{+0}$. 
    
    For all $i \in I_{+}$, we know $G_i(x^k) + H_i(x^k) > t_k$ for  all  $k$ sufficiently large and thus $\eta_i^{H,k} = \nu_i^k - \delta_i^k G_i(x^k)$.
    Since we have $\nu_i^k \to 0$ by \eqref{eq:Mult1}, we can use the definition of $\eta_i^{H,k}$ to conclude
    \[ 
        \eta_i^{H,k}(H_i(x^k) - t_k) = (\nu_i^k - \delta_i^k G_i(x^k))(H_i(x^k) - t_k) = \nu_i^k(H_i(x^k) - t_k)- \delta_i^k \Phi_i^{KS}(x^k; t_k)\to 0
    \] 
    and thus due to $H_i(x^*) >0 $ obtain $\eta_i^{H,k}\to 0$.
    This shows $\supp(\bar{\eta}^H) \subseteq I_{0-} \cup I_{0+} \cup I_{00}$.
    Hence, \eqref{eq:L3} reduces to
    \begin{align*}
        0 = \sum_{i\in I_{00} \cup I_{+0}} \bar{\eta}_i^G \nabla G_i(x^*) - \sum_{i \in I_{00} \cup I_{0-} \cup I_{0+}} \bar{\eta}^{H}_i \nabla H_i(x^*).  
    \end{align*}
    
    The next step is to determine the signs of the multipliers $\bar \eta_i^G\;(i\in I_{00}\cup I_{+0})$ and $\bar \eta_i^H\;(i\in I_{0-})$.
    To this end, first consider $i\in I_{0-}$ and assume that $\bar \eta_i^{H}<0$.
    Then for $k$ sufficiently large, we know  $G_i(x^k) + H_i(x^k) < t_k$ and thus,  using \eqref{eq:Mult1},
    \[
        \eta_i^{H,k} = \delta_i^k(H_i(x^k)-t_k) + \nu_i^k \leq -c < 0
    \] 
    for all $k$ large.
    Due to $\nu_i^k \geq -\eps_k \to 0$ and $\delta_i^k \geq - \eps_k \to 0$, it follows that $ \delta_i^k \to +\infty$.
    From \eqref{eq:Mult2} together with $i\in I_{0-}$, we hence obtain for $k$ sufficiently large
    \begin{equation*}
        \eps_k \geq  |\delta_i^k \Phi_i^{KS}(x^k;t_k)|
        = \delta_i^k \tfrac{1}{2}[G_i(x^k)^2 + (H_i(x^k)-t_k)^2]
        \geq \tfrac{1}{2}\delta_i^k G_i(x^k)^2
        \to  +\infty,
    \end{equation*}
    which is a contradiction to $\eps_k \to 0$.
    This shows $\bar \eta_i^{H}\geq 0$ for all $i\in I_{0-}$.
    
    Now consider $i\in I_{+0}$.
    Then  $G_i(x^k) + H_i(x^k) \geq t_k$ and thus $\eta_i^{G,k}=\delta_i^k(H_i(x^k)-t_k)$ for all $k$ large.
    As $\delta_i^k\geq -\eps_k$ and $H_i(x^k)-t_k\to H_i(\bar x)>0$, this implies $\bar \eta_i^G \geq 0$ for all $i\in I_{+0}$.
    
    
    Finally, consider $i \in I_{00}$ and assume that $\bar \eta_i^G<0$.
    If $G_i(x^k)+H_i(x^k)\geq t_k$ for almost all $k$, then for some $c>0$ we have
    $$
       \eta_i^{G,k}=\delta_i^k(H_i(x^k)-t_k) \leq -c < 0, \quad
       \delta^k_i \to + \infty
       \quad \text{and} \quad
       - \eta_i^{G,k} G_i(x^k) \leq |\delta_i^k \Phi_i^{KS}(x^k; t_k)| \leq \eps_k
    $$
    for all $k$ large.
    If instead $G_i(x^k)+H_i(x^k) <  t_k$ for almost all $k$, then analogously to the previous discussion we obtain  for all $k$ large
    $$
       \eta_i^{G,k}= -\delta_i^k G_i(x^k) \leq -c < 0, \quad
       \delta^k_i \to + \infty
       \quad \text{and} \quad
       - \eta_i^{G,k} G_i(x^k) \leq 2|\delta_i^k \Phi_i^{KS}(x^k; t_k)| \leq 2 \eps_k.
    $$
    Since in both cases $G_i(x^k)$ has to be positive for all $k$ large due to $\eta_i^{G,k} < 0$, it follows that
    $$
        \frac{2\eps_k}{G_i(x^k)} \geq -\eta_i^{G,k} \geq c >0
    $$
    for all $k$ large, which contradicts the assumption \eqref{eq:LepsLimsup}.
    
    All in all, we have shown $\bar \eta_i^G\geq 0 \; (i\in I_{00}\cup I_{+0})$ and $\bar \eta_i^H \geq 0 \; (i\in I_{0-})$.
    But this together with \eqref{eq:L3} and \eqref{eq:BddConv} contradicts the assumption that MPVC-MFCQ holds in $x^*$.
    
    Therefore the sequence $\{( \eta^{G,k}, \eta^{H,k})\}$ is bounded and w.l.o.g. converges to some limit $(\eta^{G,*},\eta^{H,*})$.
    Reusing the previous sign considerations (note that we never explicitly exploited that $\eta_i^{G,k}$ or $\eta_i^{H,k}$ are assumed to be unbounded to extract the desired signs), we see that $(x^*,\eta^{G,*},\eta^{H,*})$ is a weakly stationary point of \eqref{eq:MPVC}. 
\end{proof}

Note that the assumptions needed to prove this result differ a little from those needed for MPCCs in \cite{KaS13, KaS15}.
For MPCCs convergence of $\eps$-stationary points to a weakly stationary point is proven under MPCC-LICQ (in fact only MPCC-MFCQ is needed) without any additional assumptions in \cite[Theorem 4.13]{KaS13}.
However, since we consider MPVCs here, we have to ensure for both weak stationarity and MPVC-MFCQ that multipliers $\eta^G_i$ associated with gradients $\nabla G_i(x^*)$, $i \in I_{00}$ are nonnegative, whereas there is no sign constraint on multipliers $\eta_i^H$ associated $\nabla H_i(x^*)$, $i \in I_{00}$.
The subsequent example illustrates the need for an additional condition to be able to ensure weak stationarity in the MPVC setting.

\begin{example}\label{exa:LshapedCounterExampleWeak}
    Consider the MPVC
    $$
        \min_{x \in \R^2} f(x) = x_1 - x_2 \ST H(x) = x_2 \geq 0, \quad G(x)H(x) = x_1 x_2 \leq 0.
    $$
    Then $x^* = (0,0)^T$ is feasible but not weakly stationary, because the -- due to MPVC-LICQ -- unique corresponding multipliers are $(\eta^G, \eta^H) = (- 1, -1)$ although $G(x^*) = H(x^*) = 0$.
    
    Now consider the regularized problem $R^{KS}(t)$ for some $t > 0$ and define 
    $$
        x^t := \binom{t^2}{t - t^2}, \quad
        \nu^t := 0, \quad
        \delta^t := \frac{1}{t^2}, \quad
        \eps_t := t^2.
    $$
    Then obviously $x^t \to x^*$ for $t \downarrow 0$ and one easily verifies that $x^t$ with the multipliers $(\nu^t,\delta^t)$ is an $\eps_t$-stationary point of $R^{KS}(t)$ for all $t > 0$.
    Thus, even if $\eps = o(t)$ and MPVC-LICQ holds, the limit point does not have to be weakly stationary.
    Note that $\frac{\eps_t}{G(x^t)} = 1$ for all $t > 0$ and thus the additional assumption \eqref{eq:LepsLimsup} is not satisfied.
\end{example}

Theorem \ref{thm:LconvergenceEpsMFCQ} is sharp in the sense that even under the additional assumptions used, only weak stationarity of the limit can be guaranteed, but not M- or at least T-stationarity.

\begin{example}\label{exa:LshapedCounterExampleTM}
    Consider the MPVC
    $$
        \min_{x \in \R^2} f(x) = -x_1 + x_2 \ST H(x) = x_2 \geq 0, \quad G(x)H(x) = x_1 x_2 \leq 0.
    $$
    Then $x^* = (0,0)^T$ is weakly but not T- or M- stationary, because the -- due to MPVC-LICQ -- unique corresponding multipliers are $(\eta^G, \eta^H) = (1, 1)$ although $G(x^*) = H(x^*) = 0$.
    
    Now consider the regularized problem $R^{KS}(t)$ for some $t > 0$ and define 
    $$
        x^t := \binom{-t^2}{t + t^2}, \quad
        \nu^t := 0, \quad
        \delta^t := \frac{1}{t^2}, \quad
        \eps_t := t^2.
    $$
    Then obviously $x^t \to x^*$ for $t \downarrow 0$ and one easily verifies that $x^t$ with the multipliers $(\nu^t,\delta^t)$ is an $\eps_t$-stationary point of $R^{KS}(t)$ for all $t > 0$.
    Thus, although we have $\eps = o(t)$, MPVC-LICQ, and $\frac{\eps_t}{G(x^t)} = -1$ for all $t > 0$, the limit point $x^*$ is not T-stationary.
\end{example}
\subsection{The Nonsmooth  Regularization} \label{ssec:nonsmoothRegularization}

The following regularization approach is based on a regularization  scheme for complementarity constraints introduced  by Kadrani et al.  \cite{KDB09}.
For $t>0$ it is  given by

\begin{minipage}[c]{0.5\textwidth}
    \begin{align} 
        \min \ & f(x) \nonumber\\ 
        \st \ & g_i(x) \leq 0 \  (i = 1,\ldots,m), \nonumber  \\ 
         & h_j(x) = 0 \ (j = 1,\ldots,p), \tag{$R^{KDB}(t)$} \nonumber \\ 
         & H_i(x) \geq 0 \ (i = 1,\ldots,l),  \nonumber \\ 
         & \Phi^{KDB}_i(x;t) \leq 0 \  (i = 1,\ldots,l),  \nonumber 
    \end{align}
\end{minipage}
\qquad
\begin{minipage}[c]{0.45\textwidth}
    \begin{tikzpicture}[scale = 1.25]
        \fill[blue!30] (-1.5,0.5) -- (0,0.5) -- (0,2) -- (-1.5,2) -- cycle;
        \fill[blue!30]  (0,0) -- (2,0) -- (2,0.5) -- (0,0.5) -- cycle;
        \draw[->] (-1.75,0) -- (2.5,0) node[below left]{$G_i(x)$};
        \draw[->] (0,-0.5)  -- (0,0.5) node[below left]{$t$} -- (0,2.5) node[left]{$H_i(x)$};
        \draw[line width = 1.5pt, blue] (-1.5,0.5) --  (2,0.5);
        \draw[line width = 1.5pt, blue] (0,2) -- (0,00) -- (2,0);
        \draw[->] (0.5,1.5) node[right]{$\Phi^{KDB}_i(x;t)=0$} -- (0,0.75);
        \draw[->] (0.5,1.5)  -- (0.25,0.5);
    \end{tikzpicture}
\end{minipage}

\noindent
with $\Phi_i^{KDB}: \R^n \rightarrow \R$ being defined as
$$
    \Phi_i^{KDB} (x;t) = G_i(x) (H_i(x) - t).
$$
We denote the feasible set of $R^{KDB}(t)$ by  $X^{KDB}(t)$. 

For the analysis of this regularization scheme, we need the following index sets for $t > 0$ and $x \in X^{KDB}(t)$:
\begin{eqnarray*} 
    I_{H}(x) &:=& \{i = 1,\ldots,l \mid H_i(x) = 0\},\\
    I_{\Phi}(x, t) &:=& \{i =1,\ldots,l \mid \Phi_i^{KDB}(x;t) = G_{i}(x)(H_i(x) - t) = 0\},\\
    I_{\Phi}^{0,*}(x, t) &:=& \{i \in I_{\Phi}(x, t) \mid  (H_i(x) - t) = 0\},\\
    I_{\Phi}^{*,0}(x, t) &:=& \{i \in I_{\Phi}(x, t) \mid  G_{i}(x)= 0\}.
\end{eqnarray*}

Since this is the first time that this regularization scheme is applied to MPVCs instead of MPCCs, we start by analyzing the exact case, where we assume that we can compute KKT points of the regularized problems.

\begin{theorem}\label{thm:NonsmoothconvergenceExactCPLD}
     Let $\{t_k\} \downarrow 0$ and let  $\{x^k\}$ be a sequence of KKT points of $R^{KDB}(t_k)$with $x^k\to x^*$ such that MPVC-CPLD holds at $x^*$.
    Further, assume that for all $k$ large
    \begin{equation}\label{eq:NonsmoothConvergenceExactAssumption}
        H_i(x^k) \geq t_k \quad \text{or} \quad G_i(x^k) > 0 \quad \forall i \in I_{00}.
    \end{equation}
    Then $x^*$ is an M-stationary point of \eqref{eq:MPVC}.
\end{theorem}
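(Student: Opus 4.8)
The plan is to mirror the structure of the MFCQ-based proofs in this section (e.g.\ Theorem~\ref{thm:LconvergenceEpsMFCQ}), but to replace the direct positive-linear-independence contradiction by a Carath\'eodory-type reduction, since MPVC-CPLD is strictly weaker than MPVC-MFCQ. First I would write down the KKT conditions of $R^{KDB}(t_k)$ at $x^k$: there exist $\nu^k,\delta^k\in\R^l$ with $\nu^k,\delta^k\ge 0$, the complementarity relations $\nu_i^k H_i(x^k)=0$ and $\delta_i^k\Phi_i^{KDB}(x^k;t_k)=0$, and, using $\nabla\Phi_i^{KDB}=(H_i-t)\nabla G_i+G_i\nabla H_i$, the stationarity equation
\begin{equation*}
   \nabla f(x^k)-\sum_{i=1}^l\nu_i^k\nabla H_i(x^k)+\sum_{i=1}^l\delta_i^k\big[(H_i(x^k)-t_k)\nabla G_i(x^k)+G_i(x^k)\nabla H_i(x^k)\big]=0 .
\end{equation*}
As in the previous sections I would reparametrize via $\eta_i^{G,k}:=\delta_i^k(H_i(x^k)-t_k)$ and $\eta_i^{H,k}:=\nu_i^k-\delta_i^k G_i(x^k)$, turning the stationarity equation into $\nabla f(x^k)-\sum_i\eta_i^{H,k}\nabla H_i(x^k)+\sum_i\eta_i^{G,k}\nabla G_i(x^k)=0$. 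Feasibility of the limit $x^*$ for \eqref{eq:MPVC} follows from $t_k\downarrow 0$ and continuity.

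Next I would pin down the supports and signs of the transformed multipliers from the complementarity relations together with the sign of $(G_i(x^*),H_i(x^*))$ on each index set. For $i\in I_{+-}$ one has $\Phi_i^{KDB}(x^k;t_k)<0$ eventually, forcing $\delta_i^k=0$, and $\nu_i^k=0$ from $H_i(x^*)>0$; for $i\in I_{+0}$ the relation $H_i(x^*)>0$ gives $\nu_i^k=0$, and whenever $\delta_i^k>0$ complementarity forces $G_i(x^k)=0$, so that $\eta_i^{H,k}=0$ while $\eta_i^{G,k}\ge 0$; for $i\in I_{0\pm}$ the sign of $G_i(x^*)$ and complementarity control both components. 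The outcome I expect is $\supp(\eta^{G,k})\subseteq I_{00}\cup I_{+0}$ and $\supp(\eta^{H,k})\subseteq I_0$ in the limit, together with $\liminf_k\eta_i^{H,k}\ge 0$ for $i\in I_{0-}$ and $\eta_i^{G,k}\ge 0$ for $i\in I_{+0}\cup I_{00}$. The nonnegativity of $\eta_i^{G,k}$ on $I_{00}$ is exactly where assumption \eqref{eq:NonsmoothConvergenceExactAssumption} enters: it rules out the regime $H_i(x^k)<t_k,\ G_i(x^k)\le 0$, in which feasibility would force $G_i(x^k)=0$ and hence $\eta_i^{G,k}=\delta_i^k(H_i(x^k)-t_k)<0$, destroying even weak stationarity.

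The main obstacle is boundedness of $\{(\eta^{G,k},\eta^{H,k})\}$. Under MFCQ one would normalize and pass to the limit to contradict positive linear independence directly; under the weaker MPVC-CPLD I would instead apply Carath\'eodory's lemma at each $x^k$ to re-express the stationarity equation using a subset $J_k$ of the active gradients $\{\nabla H_i(x^k),\nabla G_i(x^k)\}$ that is linearly independent, while retaining the correct signs on the components constrained by MPVC-CPLD. Passing to a subsequence on which $J_k\equiv J$ is constant and assuming the reduced multipliers unbounded, I would normalize and take limits to obtain a nontrivial positively-signed linear combination of $\nabla H_i(x^*),\nabla G_i(x^*)$ equal to zero; by the sign/support information just established this is precisely a positive linear dependence of the form forbidden/controlled by MPVC-CPLD, which then forces these gradients to remain linearly dependent on a whole neighborhood of $x^*$, contradicting the linear independence of the selected gradients at $x^k$ for $k$ large. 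Hence the multipliers are bounded and converge along a subsequence to a limit $(\eta^{G,*},\eta^{H,*})$ that, by the above, satisfies all weak-stationarity sign and support conditions.

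Finally, to upgrade weak stationarity to M-stationarity I would show $\eta_i^{G,k}\eta_i^{H,k}=0$ for every $i\in I_{00}$ and all $k$ large, a property that passes to the limit. The key is that whenever $\delta_i^k>0$, complementarity $\Phi_i^{KDB}(x^k;t_k)=0$ forces $G_i(x^k)=0$ or $H_i(x^k)=t_k$; combining this with the dichotomy \eqref{eq:NonsmoothConvergenceExactAssumption} and $\nu_i^k H_i(x^k)=0$, one checks that in each admissible case exactly one of $\eta_i^{G,k}=\delta_i^k(H_i(x^k)-t_k)$ and $\eta_i^{H,k}=\nu_i^k-\delta_i^k G_i(x^k)$ vanishes (e.g.\ if $H_i(x^k)=t_k$ then $\eta_i^{G,k}=0$, while if $G_i(x^k)=0$ then the $\delta_i^k$-term drops out of $\eta_i^{H,k}$ and the residual $\nu_i^k$ vanishes by complementarity), whereas $\delta_i^k=0$ trivially gives $\eta_i^{G,k}=0$. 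Thus $\eta_i^{G,*}\eta_i^{H,*}=0$ on $I_{00}$, which together with weak stationarity yields M-stationarity of $x^*$.
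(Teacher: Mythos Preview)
Your plan matches the paper's proof essentially step for step: the same reparametrization $\eta_i^{G,k}=\delta_i^k(H_i(x^k)-t_k)$, $\eta_i^{H,k}=\nu_i^k-\delta_i^k G_i(x^k)$, the same support/sign analysis (including the use of \eqref{eq:NonsmoothConvergenceExactAssumption} to secure $\eta_i^{G,k}\ge 0$ on $I_{00}$), the same Carath\'eodory reduction (\cite[Lemma~A.1]{StU10}) followed by a normalization/CPLD contradiction for boundedness, and the same complementarity argument for M-stationarity.

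One subtlety your sketch glosses over: after the Carath\'eodory reduction the new multipliers no longer have the structural form $\delta_i^k(H_i(x^k)-t_k)$ and $\nu_i^k-\delta_i^k G_i(x^k)$, as the paper explicitly warns. Your M-stationarity case analysis establishes $\eta_i^{G,k}\eta_i^{H,k}=0$ on $I_{00}$ for the \emph{original} multipliers, but the limit $(\eta^{G,*},\eta^{H,*})$ is taken along the \emph{reduced} ones, so ``passes to the limit'' is not automatic. The bridge is that the Carath\'eodory lemma does not enlarge supports: if the original $\eta_i^{G,k}$ (resp.\ $\eta_i^{H,k}$) vanishes, so does the reduced one, hence the product-zero condition transfers and only then passes to the limit. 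The paper makes exactly this connection explicit (arguing that a reduced $\eta_i^{G,*}>0$ forces the original $\eta_i^{G,k}>0$, whence by your computation the original $\eta_i^{H,k}=0$, whence the reduced $\eta_i^{H,k}=0$); you should unpack your last paragraph the same way.
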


\begin{proof}
    Since $x^k$ is a KKT point of $R^{KDB}(t_k)$, there exist multipliers $(\nu^k, \delta^k)$ such that 
    \begin{eqnarray}
        &&  0 = \nabla f(x^k) - \sum_{i=1}^l \nu_i^k H_i(x^k) + \sum_{i=1}^l \delta_i^k \nabla \Phi^{KDB}_i(x^k;t_k), \label{eq: KAD1} \\
        && \nu^k \geq 0, \quad \supp(\nu^k) \subseteq I_H(x^k), \\
        && \delta^k \geq 0, \quad \supp(\delta^k) \subseteq I_{\Phi}(x^k,t_k)
    \end{eqnarray}
    for all $k \in \N$.
    Now define the multipliers $(\eta^{G,k}, \eta^{H,k})$ component-wise by
    \begin{equation*}
        \eta_i^{G,k} := \delta_i^k (H_i(x^k) - t_k) 
        \quad \text{and} \quad 
        \eta_i^{H,k} := \nu^k_i -\delta_i^k G_i(x^k)
    \end{equation*}
    for all $i = 1,\ldots, l$.
    Using these definitions, \eqref{eq: KAD1} reads
    \begin{align}\label{eq: KAD2}
        0 = \nabla f(x^k) + \sum_{i=1}^l \eta_i^{G,k}\nabla G_i(x^k) - \sum_{i=1}^l \eta_i^{H,k}\nabla H_i(x^k).
    \end{align}
    By simple continuity arguments, it follows that 
    \begin{equation*}
        \supp(\eta^{G,k}) \subseteq I_{\Phi}^{*0}(x^k,t_k) \subseteq I_{00}  \cup I_{+0}, 
        \quad \text{and} \quad 
        \supp(\eta^{H,k}) \subseteq  I_H(x^k) \cup I_{\Phi}^{0*}(x^k,t_k) \subseteq I_0.
    \end{equation*}
    For all $i \in I_{0-}$ we have $G_i(x^k) < 0$ for all $k$ large and thus $\eta_i^{H,k} \geq 0$.
    For all $i \in I_{+0}$ we have $H_i(x^k) > t_k$ for all $k$ large and thus $\eta^{G,k}_i \geq 0$.
    Finally, for all $i \in I_{00}$, our assumption guarantees that either $H_i(x^k) \geq t_k$ or $\Phi_i(x^k;t_k) <0$ and thus $\delta^k_i = 0$.
    In both cases $\eta^{G,k}_i \geq 0$ follows.

    Using \cite[Lemma A.1]{StU10}, we can assume w.l.o.g. that the gradients
    \begin{equation}\label{eq: kad-indep}
        \nabla G_i(x^k) \; (i \in \supp(\eta^{G,k})) , \quad
        \nabla H_i(x^k) \; (i \in \supp(\eta^{H,k}) )
    \end{equation}
    are linearly independent for all $k \in \N$.
    (Beware that while we can preserve the signs of the multipliers $\eta^{G,k}, \eta^{H,k}$ and the upper estimates for their support, their structure is lost after using this lemma.)
    
    If the sequence $\{(\eta^{G,k},\eta^{H,k})\}$ were unbounded, we can assume w.l.o.g.
    \begin{equation*}
        \frac{(\eta^{G,k},\eta^{H,k})}{\| (\eta^{G,k},\eta^{H,k}) \|} \to (\bar \eta^{G},\bar \eta^{H}) \neq 0.
    \end{equation*}
    Dividing equation \eqref{eq: KAD2} by $\| (\eta^{G,k},\eta^{H,k}) \|$ and taking the limit $k \to \infty$ then yields
    \begin{align}\label{eq: KAD4}
        0 &= \sum_{i \in \supp(\bar \eta^H)} \bar \eta_i^{G} \nabla G_i(x^*) - \sum_{i \in \supp(\bar \eta^G)} \bar \eta_i^{H}\nabla H_i(x^*),
    \end{align}
    where $\supp(\bar \eta^G) \subseteq I_{00} \cup I_{+0}$, $\supp(\bar \eta^H) \subseteq I_{0}$ and  $\bar \eta^H_i \geq 0$ for all $i \in I_{0-}$ as well as $\bar \eta^G_i \geq 0$ for all $i \in I_{+0} \cup I_{00}$.
    Since $(\bar \eta^{G},\bar \eta^{H}) \neq 0$, MPVC-CPLD then implies that the gradients
    \begin{eqnarray*}
        && - \nabla H_i(x^k) \; (i \in \supp(\bar \eta^H) \cap I_{0-}), \quad
        \nabla G_i(x^k) \; (i \in \supp(\bar \eta^G) \cap (I_{+0} \cup I_{00}))\\
        &\text{and}& \nabla H_i(x^k) \; (i \in \supp(\bar \eta^H) \cap (I_{0+} \cup I_{00}))
    \end{eqnarray*}
    remain linearly dependent for all $k$ sufficiently large.
    However, this contradicts our assumption that the gradients in \eqref{eq: kad-indep} are linearly independent for all $k \in \N$.

    Thus, the sequence $\{(\eta^{G,k},\eta^{H,k})\}$ is bounded and we can assume w.l.o.g. that it is convergent to some vector $(\eta^{G,*},\eta^{H,*})$.
    This limit then satisfies
    \begin{equation*}
        0 = \nabla f(x^*)  + \sum_{i=1}^l \eta^{G,*}_i \nabla G_i(x^*) - \sum_{i=1}^l \eta^{H,*}_i H_i(x^*) 
    \end{equation*}
    and, by the same arguments as above,
    \begin{equation*}
        \supp(\eta^{G,*}) \subseteq I_{00} \cup I_{+0} , \quad
        \supp(\eta^{H,*}) \subseteq I_{0}
    \end{equation*}
    as well as $\eta^{H,*}_i \geq 0$ for all $i \in I_{0-}$ and $\eta^{G,*}_i \geq 0$ for all $i \in I_{+0} \cup I_{00}$.
    This shows that $x^*$ is a weakly stationary point of \eqref{eq:MPVC}.

    To prove that the limit is in fact M-stationary, we have to show $\eta^{G,*}_i \eta^{H,*}_i = 0$ for all $i \in I_{00}$.
    Here, we already know $\eta^{G,*}_i \geq 0$ for all $i \in I_{00}$.
    In case $\eta^{G,*}_i > 0$ for some $i \in I_{00}$, we know $\eta^{G,k}_i > 0$ for all $k$ sufficiently large.
    Since applying \cite[Lemma A.1]{StU10} does not enlarge the support of the multipliers and preserves their signs, it follows that $\delta_i^k (H_i(x^k) - t_k) > 0$ for all $k$ sufficiently large and thus $\Phi_i^{KDB}(x^k;t_k) = 0$ and $H_i(x^k) > t_k > 0$.
    But then $G_i(x^k) = 0$ and $\nu_i^k = 0$ and thus $\eta^{H,k_i} = 0$ for all $k$ large.
    This is again preserved under \cite[Lemma A.1]{StU10} and thus implies $\eta^{H,*}_i = 0$.
\end{proof}

In the previous result, we need the additional assumption \eqref{eq:NonsmoothConvergenceExactAssumption} in order to be able to utilize MPVC-CPLD and to prove weak stationarity of the limit.
Such an assumption is not needed for MPCCs, see \cite{KDB09}.
However, the following example illustrates the necessity of such an additional assumption in the MPVC setting.

\begin{example}\label{exa:nonsmoothCounterExampleWeakExact}
    Consider the MPVC
    $$
        \min_{x \in \R^2} f(x) = x_1 \ST H(x) = x_2 \geq 0, \quad G(x)H(x) = x_1 x_2 \leq 0.
    $$
    Then $x^* = (0,0)^T$ is feasible but not weakly stationary, because the -- due to MPVC-LICQ -- unique multipliers are $(\eta^G, \eta^H) = (- 1, 0)$ although $G(x^*) = H(x^*) = 0$.
    
    Now consider the regularized problem $R^{KDB}(t)$ for some $t > 0$ and define 
    $$
        x^t := \binom{0}{\frac{t}{2}}, \quad
        \nu^t := 0, \quad
        \delta^t := \frac{2}{t}
    $$
    Then obviously $x^t \to x^*$ for $t \downarrow 0$ and one easily verifies that $x^t$ with the multipliers $(\nu^t,\delta^t)$ is a KKT point of $R^{KS}(t)$ for all $t > 0$.

    This illustrates that even under MPVC-LICQ, the limit point does not have to be weakly stationary without the additional assumption \eqref{eq:NonsmoothConvergenceExactAssumption}.
    Note that in this example we have $G(x^t) = 0$ and $H(x^t) = \frac{t}{2} < t$ for all $t > 0$ and thus  \eqref{eq:NonsmoothConvergenceExactAssumption} is not satisfied.
\end{example}

Note that a similar situation is not possible when applying the L-shaped regularization from Section \ref{ssec:LshapedRegularization} instead.
This is a new, MPVC-specific observation, because both methods have the same theoretical properties, when applied to MPCCs.

The next step is again to consider how the convergence properties change, when we compute only $\eps$-stationary points of the regularized problems.

\begin{theorem}\label{thm:nonsmoothConvergenceEpsMFCQ}
    Let $\{t_k\}, \{\eps_k\}  \downarrow 0$, and let $\{x^{k}\}$ be a sequence of $\eps_k$-stationary points of $R^{KDB}(t_{k})$ with $x^k \rightarrow x^*$ such MPVC-MFCQ holds at $x^{*}$.
    Assume furthermore that for all $i \in I_{00}$ and all $k$ large
    \begin{equation*}
        [H_i(x^k) \geq t_k \quad \text{or} \quad G_i(x^k) > 0] 
        \quad \text{and} \quad 
        \liminf_{k\to\infty} \frac{\eps_k}{G_i(x^k)}\leq 0,
    \end{equation*}
    whenever the latter is well-defined.
    Then $x^{*}$ is a weakly stationary point of \eqref{eq:MPVC}.
\end{theorem}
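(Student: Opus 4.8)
The plan is to follow the template of the exact nonsmooth result (Theorem~\ref{thm:NonsmoothconvergenceExactCPLD}), but to replace exact complementarity by the soft complementarity of $\eps_k$-stationarity, exactly as the inexact $L$-shaped analysis (Theorem~\ref{thm:LconvergenceEpsMFCQ}) adapts its exact counterpart. First I would write out the $\eps_k$-stationarity system for $R^{KDB}(t_k)$: there exist $(\nu^k,\delta^k)\in\R^l\times\R^l$ with
\[
    \left\| \nabla f(x^k) - \sum_{i=1}^l \nu_i^k \nabla H_i(x^k) + \sum_{i=1}^l \delta_i^k \nabla \Phi_i^{KDB}(x^k;t_k) \right\|_\infty \leq \eps_k,
\]
together with $\nu_i^k\geq-\eps_k$, $H_i(x^k)\geq-\eps_k$, $|\nu_i^k H_i(x^k)|\leq\eps_k$ and $\delta_i^k\geq-\eps_k$, $\Phi_i^{KDB}(x^k;t_k)\leq\eps_k$, $|\delta_i^k \Phi_i^{KDB}(x^k;t_k)|\leq\eps_k$. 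Since $\nabla\Phi_i^{KDB}(x^k;t_k)=(H_i(x^k)-t_k)\nabla G_i(x^k)+G_i(x^k)\nabla H_i(x^k)$, setting $\eta_i^{G,k}:=\delta_i^k(H_i(x^k)-t_k)$ and $\eta_i^{H,k}:=\nu_i^k-\delta_i^k G_i(x^k)$ recasts the estimate into the MPVC form
\[
    \left\| \nabla f(x^k) + \sum_{i=1}^l \eta_i^{G,k}\nabla G_i(x^k) - \sum_{i=1}^l \eta_i^{H,k}\nabla H_i(x^k) \right\|_\infty \leq \eps_k,
\]
which is the same reformulation used in the exact proof.

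The core is to show that $\{(\eta^{G,k},\eta^{H,k})\}$ is bounded, via the usual normalize-and-pass-to-the-limit argument contradicting MPVC-MFCQ. Assuming unboundedness and $(\eta^{G,k},\eta^{H,k})/\|(\eta^{G,k},\eta^{H,k})\|\to(\bar\eta^G,\bar\eta^H)\neq 0$, dividing by the norm and letting $k\to\infty$ gives $0=\sum_i\bar\eta_i^G\nabla G_i(x^*)-\sum_i\bar\eta_i^H\nabla H_i(x^*)$. I would then pin down supports and signs. For $i\in I_{0+}$ the identity $G_i(x^k)\eta_i^{G,k}=\delta_i^k\Phi_i^{KDB}(x^k;t_k)\to 0$ with $G_i(x^*)>0$ forces $\bar\eta_i^G=0$; for $i\in I_{+-}$ the limit $\Phi_i^{KDB}(x^k;t_k)\to G_i(x^*)H_i(x^*)<0$ forces $\delta_i^k\to 0$ and hence $\bar\eta_i^G=0$. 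The inexact regime makes $i\in I_{0-}$ genuinely harder than the exact case, since one no longer has $\supp(\delta^k)\subseteq I_\Phi(x^k,t_k)$; instead I would use $|G_i(x^k)|\,|\eta_i^{G,k}|=|\delta_i^k\Phi_i^{KDB}(x^k;t_k)|\leq\eps_k$ together with $|G_i(x^*)|>0$ to conclude $\eta_i^{G,k}\to 0$, giving $\supp(\bar\eta^G)\subseteq I_{00}\cup I_{+0}$. Symmetric arguments using $\nu_i^k\to 0$ (from $H_i(x^*)>0$) and boundedness of $\delta_i^k$ on $I_{+0}$ yield $\supp(\bar\eta^H)\subseteq I_0$, and the signs $\bar\eta_i^H\geq 0$ $(i\in I_{0-})$ and $\bar\eta_i^G\geq 0$ $(i\in I_{+0})$ follow directly from $\nu_i^k,\delta_i^k\geq-\eps_k$ and the definite sign of $H_i(x^k)-t_k$.

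The main obstacle is the sign $\bar\eta_i^G\geq 0$ on the bi-active set $I_{00}$, and this is precisely where both extra hypotheses enter. Assuming $\bar\eta_i^G<0$ forces $\eta_i^{G,k}\to-\infty$; the disjunction $H_i(x^k)\geq t_k$ or $G_i(x^k)>0$ rules out the first alternative (there $\eta_i^{G,k}\geq-\eps_k(H_i(x^k)-t_k)\to 0$), so for $k$ large one has $G_i(x^k)>0$ and $H_i(x^k)-t_k<0$, whence $\delta_i^k\to+\infty$. Then the complementarity $|\delta_i^k\Phi_i^{KDB}(x^k;t_k)|\leq\eps_k$ rearranges to $-\eta_i^{G,k}\leq\eps_k/G_i(x^k)$, so $\eps_k/G_i(x^k)\to+\infty$, contradicting $\liminf_k\eps_k/G_i(x^k)\leq 0$. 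With all supports and signs in hand, the limiting relation is a positive-linear-dependence relation of $\nabla G_i(x^*)\,(i\in I_{+0}\cup I_{00})$, $-\nabla H_i(x^*)\,(i\in I_{0-})$ and $\nabla H_i(x^*)\,(i\in I_{0+}\cup I_{00})$, contradicting MPVC-MFCQ; hence $\{(\eta^{G,k},\eta^{H,k})\}$ is bounded. Passing to a convergent subsequence $(\eta^{G,*},\eta^{H,*})$ and replaying the identical support and sign arguments—none of which actually used unboundedness—gives $0=\nabla f(x^*)+\sum_i\eta_i^{G,*}\nabla G_i(x^*)-\sum_i\eta_i^{H,*}\nabla H_i(x^*)$ with $\supp(\eta^{G,*})\subseteq I_{00}\cup I_{+0}$, $\supp(\eta^{H,*})\subseteq I_0$, and $\eta_i^{G,*}\geq 0$ $(i\in I_{+0}\cup I_{00})$, $\eta_i^{H,*}\geq 0$ $(i\in I_{0-})$, which is exactly weak stationarity of $x^*$.
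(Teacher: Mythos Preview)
Your proposal is correct and follows essentially the same route as the paper's proof: set up the $\eps_k$-stationarity system, introduce $\eta_i^{G,k}=\delta_i^k(H_i(x^k)-t_k)$ and $\eta_i^{H,k}=\nu_i^k-\delta_i^k G_i(x^k)$, normalize and contradict MPVC-MFCQ to get boundedness, then pass to a limit. The treatment of $I_{00}$ via the two extra hypotheses is exactly the paper's argument.

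One small imprecision: your claim of ``boundedness of $\delta_i^k$ on $I_{+0}$'' to obtain $\supp(\bar\eta^H)\subseteq I_0$ is not justified and need not be true (if $G_i(x^k)=0$ then $\Phi_i^{KDB}(x^k;t_k)=0$ and the soft complementarity imposes no bound on $\delta_i^k$). What you actually need is $\delta_i^k G_i(x^k)\to 0$ for $i\in I_{+0}$, and this follows directly from $\delta_i^k G_i(x^k)=\delta_i^k\Phi_i^{KDB}(x^k;t_k)/(H_i(x^k)-t_k)$ together with $|\delta_i^k\Phi_i^{KDB}(x^k;t_k)|\leq\eps_k\to 0$ and $H_i(x^k)-t_k\to H_i(x^*)>0$; this is precisely how the paper argues, via the identity $\eta_i^{H,k}(H_i(x^k)-t_k)=\nu_i^k(H_i(x^k)-t_k)-\delta_i^k\Phi_i^{KDB}(x^k;t_k)\to 0$. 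With this fix, your argument matches the paper's in every essential respect. (Also, in the $I_{00}$ step, note that your sign arguments in fact only use $\eta_i^{G,k}<-c<0$, not $\eta_i^{G,k}\to-\infty$, so your final remark that ``none of which actually used unboundedness'' is consistent with what is really needed when you replay the argument for the bounded case.)
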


\begin{proof}
    Since $x^k$ are $\eps_k$-stationary points of $R^{KDB}(t_k)$, there exist multipliers $(\nu^k,\delta^k)$ such that
    \begin{eqnarray}
        && \left\|\nabla f(x^k)- \sum_{i=1}^k\nu_i^k \nabla H_i(x^k) + \sum_{i=1}^l \delta_i^k \nabla \Phi_i^{KDB}(x^k; t_k)\right\|_\infty \leq \eps_k, \label{eq:nonsmoothEpsStat} \\
        && \nu_i^k \geq -\eps_k, \quad H_i(x^k) \geq -\eps_k, \quad |\nu^k_i H_i(x^k)| \leq \eps_k \quad (i = 1,\ldots, l), \label{eq:nonsmoothEpsStatMult1}\\
        && \delta_i^k \geq -\eps_k, \quad \Phi_i^{KDB}(x^k; t_k) \leq \eps_k, \quad |\delta_i^k \Phi_i^{KDB}(x^k; t_k)| \leq \eps_k \quad (i = 1,\ldots, l). \label{eq:nonsmoothEpsStatMult2}
    \end{eqnarray}
    Here, the gradients of $\Phi_i^{KDB}$ are given by 
    \[
        \nabla \Phi_i^{KDB}(x^k ; t_k) = (H_i(x^k) - t_k) \nabla G_i(x^k) + G_i(x^k)\nabla H_i(x^k) .
    \]
    
    We now define multipliers $\eta^{G,k}, \eta^{H,k} \in \R^l$ component-wise by
    \begin{equation*}
        \eta^{G,k}_i := \delta_i^k (H_i(x^k) - t_k), 
        \quad \text{and} \quad 
        \eta_i^{H,k} := \nu_i^k - \delta_i^k G_i(x^k).
    \end{equation*}
    Using these new multipliers, we can re-write \eqref{eq:nonsmoothEpsStat} as
    \begin{align}\label{eq:nonsmoothEpsStat2}
        \left\|\nabla f(x^k) - \sum_{i=1}^l \eta^{H,k}_i \nabla H_i(x^k) + \sum_{i=1}^l \eta^{G,k}_i \nabla G_i(x^k) \right\|_{\infty} \leq \eps_k. 
    \end{align}
    
    We claim that the sequence $\{(\eta^{G,k}, \eta^{H,k})\}$ is bounded.
    If the sequence were unbounded, then the whole normalized sequence would converge w.l.o.g. 
    \begin{equation*}
        \frac{(\eta^{G,k},\eta^{H,k})}{\|(\eta^{G,k},\eta^{H,k})\|} \to (\bar{\eta}^G, \bar{\eta}^H) \neq 0.
    \end{equation*}
    Then, dividing \eqref{eq:nonsmoothEpsStat2} by $\|(\eta^{G,k},\eta^{H,k})\|$ and passing to the limit we obtain
    \begin{align}\label{eq:nonsmoothEpsLinDep}
        0 = \sum_{i=1}^l \bar{\eta}_i^G \nabla G_i(x^{*}) - \sum_{i=1}^l \bar{\eta}^{H}_i\nabla H_i(x^{*}).    
    \end{align}
    Here, for all $i \in I_{+-}$, we have $\lim_{k\to \infty} \Phi_i^{KDB}(x^k;t_k) \neq 0$, and hence $\delta_i^k\to 0$ due to  \eqref{eq:nonsmoothEpsStatMult2}.
    Therefore, $\eta_i^{G,k}\to 0$ and hence $\bar{\eta}^G_i = 0$.
    For all  $i \in I_{0+} \cup I_{0-}$ we have 
    \[
        G_i(x^k)\eta_i^{G,k} = G_i(x^k) \delta_i^k (H_i(x^k) - t_k) = \delta_i^k \Phi_i^{KDB}(x^k;t_k) \to  0, 
    \]
    Since $G_i(x^*) \not \to0$, this implies  $\eta_i^{G,k}\to 0$ and hence $\bar{\eta}^G_i = 0$.
    All in all, we have proven $\supp(\bar{\eta}^{G}) \subseteq I_{00} \cup I_{+0}$. 
    
    Now, for all $i \in I_{+}$, we know $\nu_i^k \to 0$ by \eqref{eq:nonsmoothEpsStatMult1} and thus using \eqref{eq:nonsmoothEpsStatMult2} can conclude
    \[ 
        \eta_i^{H,k}(H_i(x^k) - t_k) = (\nu_i^k - \delta_i^k G_i(x^k))(H_i(x^k) - t_k) = \nu_i^k(H_i(x^k) - t_k)- \delta_i^k \Phi_i^{KDB}(x^k; t_k)\to 0.
    \] 
    Here, $H_i(x^*) >0 $ implies $\eta_i^{H,k}\to 0$.
    This shows $\supp(\bar{\eta}^H) \subseteq I_0 = I_{0-} \cup I_{0+} \cup I_{00}$.
    Hence, \eqref{eq:nonsmoothEpsLinDep} reduces to
    \begin{align*}
        0 = \sum_{i\in I_{00} \cup I_{+0}} \bar{\eta}_i^G \nabla G_i(x^*) - \sum_{i \in I_{00} \cup I_{0-} \cup I_{0+}} \bar{\eta}^{H}_i \nabla H_i(x^*).  
    \end{align*}
    
    The next step is to determine the signs of the multipliers $\bar \eta_i^G\;(i\in I_{00}\cup I_{+0})$ and $\bar \eta_i^H\;(i\in I_{0-})$.
    To this end, first consider $i\in I_{0-}$ and $\eta_i^{H,k} = \nu_i^k - \delta_i^k G_i(x^k)$.
    Due to $\nu_i^k \geq -\eps_k \to 0$ and $\delta_i^k \geq - \eps_k \to 0$ together with $-G_i(x^k) > c > 0$ for all $k$ large we can conclude 
    \begin{equation*}
        \liminf_{k \to \infty} \eta_i^{H,k} = \liminf_{k \to \infty} \nu_i^k - \delta_i^k G_i(x^k) \geq 0
        \quad \Longrightarrow \quad
        \bar \eta^{H}_i \geq 0.
    \end{equation*}
    
    Now consider $i\in I_{+0}$ and $\eta^{G,k}_i = \delta_i^k (H_i(x^k) - t_k)$.
    Since $\delta_i^k \geq - \eps_k \to 0$ and $H_i(x^k) - t_k > c > 0$ for all $k$ large, this implies
    \begin{equation*}
        \liminf_{k \to \infty} \eta_i^{G,k} = \liminf_{k \to \infty} \delta_i^k (H_i(x^k) - t_k) \geq 0
        \quad \Longrightarrow \quad
        \bar \eta^{G}_i \geq 0.
    \end{equation*}
    
    Finally, consider $i \in I_{00}$ and assume that $\eta^{G,k}_i = \delta_i^k (H_i(x^k) - t_k) < -c < 0$ for all $k$ large.
    Since $H_i(x^k) - t_k \to 0$ and $\delta^k_i \geq -\eps_k$, this implies $\delta^k_i \to + \infty$ and $H_i(x^k) < t_k $ for all $k$ large.
    Our assumption then implies $G_i(x^k) > 0$ and thus
    \begin{equation*}
        0 < - \eta^{G,k}_i G_i(x^k) = -\delta^k_i (H_i(x^k) - t_k) G_i(x^k) =|\delta^k_i \Phi_i^{KDB}(x^k;t_k)| \leq \eps_k
    \end{equation*}
    for all $k$ large.
    But then it follows that
    \begin{equation*}
        \frac{\eps_k}{G_i(x^k)} \geq - \eta^{G,k}_i > c > 0
    \end{equation*}
    for all $k$ large in contradiction to the assumptions.
    This shows $\bar \eta^{G}_i \geq 0$ for all $I \in I_{00}$.
    
    Since we have verified $\bar \eta_i^G\geq 0 \; (i\in I_{00}\cup I_{+0})$ and $\bar \eta_i^H \geq 0 \; (i\in I_{0-})$, equation \eqref{eq:nonsmoothEpsLinDep} and $(\bar \eta^G, \bar \eta^H) \neq (0,0)$ contradict the assumption that MPVC-MFCQ holds in $x^*$.
    
    Therefore the sequence $\{(\eta^{G,k}, \eta^{H,k})\}$ is bounded and w.l.o.g. converges to some limit $(\eta^{G,*},\eta^{H,*})$.
    Reusing the previous sign considerations (note that we never needed $|\eta_i^{G,k}| \to \infty$ or $|\eta_i^{H,k}| \to \infty$ to extract the desired supports and signs), we see that $(x^*,\eta^{G,*},\eta^{H,*})$ is a weakly stationary point of \eqref{eq:MPVC}. 
\end{proof}

Thus, similarly to the local and the L-shaped regularization, the favourable theoretical convergence properties of this regularization scheme are lost in the inexact case.
Examples \ref{exa:nonsmoothCounterExampleWeakExact} and \ref{exa:LshapedCounterExampleWeak} illustrate that neither of the two additional assumptions for $i \in I_{00}$ can be dropped.
To see this, note that in Example \ref{exa:LshapedCounterExampleWeak} we have $x_1^t + x_2^t = t$ for all $t > 0$ and thus the L-shaped regularization coincides with the nonsmooth regularization in $x^t$.
Example \ref{exa:LshapedCounterExampleTM}, where again $x_1^t + x_2^t = t$ for all $t > 0$, illustrates that we cannot guarantee more than weak stationarity of the limit under the assumptions of Theorem \ref{thm:nonsmoothConvergenceEpsMFCQ}.

We finish our discussion of the nonsmooth regularization method by proving that the regularized problems locally satisfy standard GCQ.

\begin{theorem}\label{thm:nonsmoothStandardCQGCQ}
    Let $x^*$ be feasible for the MPVC \eqref{eq:MPVC} such that MPVC-LICQ holds at $x^*$.
    Then there exists $\bar{t} > 0$ and a neighborhood $N(x^*)$ of $x^*$ such that, for all $t \in (0, \bar{t}]$ and all $x \in X^{KDB}(t) \cap N(x^*)$, standard GCQ for $R^{KDB}(t)$ is satisfied at $x$.
\end{theorem}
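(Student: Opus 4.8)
The plan is to follow the template of the proof of Theorem~\ref{thm:localStandardCQACQ}, but to account for the kink that $X^{KDB}(t)$ retains at points where $(G_i(x),H_i(x)) = (0,t)$; there $\nabla\Phi_i^{KDB}(x;t) = (H_i(x)-t)\nabla G_i(x) + G_i(x)\nabla H_i(x)$ vanishes, so neither LICQ nor ACQ can hold and one genuinely has to argue with polar cones. First I would use continuity to fix $N(x^*)$ and $\bar t > 0$ so that, for every $t \in (0,\bar t]$ and every $\hat x \in X^{KDB}(t) \cap N(x^*)$, the active index sets satisfy inclusions of the form $I_H(\hat x) \subseteq I_{0+} \cup I_{00}$, $I_\Phi^{*,0}(\hat x,t) \subseteq I_{00} \cup I_{+0}$ and $I_\Phi^{0,*}(\hat x,t) \subseteq I_{00} \cup I_{0+} \cup I_{0-}$, so that in particular the \emph{kink set} $K := I_\Phi^{*,0}(\hat x,t) \cap I_\Phi^{0,*}(\hat x,t) \subseteq I_{00}$; shrinking $N(x^*)$ further, the MPVC-LICQ gradients remain linearly independent on $N(x^*)$, cf.\ \cite[Proposition~2.2]{QiW00}. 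A key bookkeeping observation is that for every active $\Phi$-constraint with $i \notin K$ the gradient $\nabla\Phi_i^{KDB}(\hat x;t)$ is a nonzero scalar multiple of either $\nabla G_i(\hat x)$ (if $i \in I_\Phi^{*,0}$) or $\nabla H_i(\hat x)$ (if $i \in I_\Phi^{0,*}$), with a sign determined by the index set containing $i$.

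Next I would localize the feasible set around an arbitrary such $\hat x$ via a finite \emph{branch decomposition}. Since near the kink the $i$-th vanishing block reduces to $\{G_i(H_i - t) \le 0\} = \{G_i \ge 0,\, H_i \le t\} \cup \{G_i \le 0,\, H_i \ge t\}$, I would introduce for each selection $\sigma : K \to \{1,2\}$ the auxiliary NLP $\hat X_\sigma$ obtained by keeping every constraint active at $\hat x$ with $i \notin K$ and, for each kink index, replacing $\Phi_i^{KDB} \le 0$ by the two branch inequalities of the chosen branch (namely $-G_i \le 0,\, H_i - t \le 0$ for branch $1$ and $G_i \le 0,\, -(H_i - t) \le 0$ for branch $2$). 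For a sufficiently small ball $B_r(\hat x)$ one checks the local identity $X^{KDB}(t) \cap B_r(\hat x) = \bigcup_\sigma \hat X_\sigma \cap B_r(\hat x)$, since each branch forces $\Phi_i^{KDB} \le 0$ and $H_i > 0$ and together the branches exhaust the local feasible set. Because each $\hat X_\sigma$ uses as active gradients only signed multiples of the MPVC-LICQ gradients — for a kink index $i \in K \subseteq I_{00}$ the two distinct vectors $\nabla G_i(\hat x)$ and $\nabla H_i(\hat x)$, both members of the MPVC-LICQ family — these are linearly independent, so each $\hat X_\sigma$ satisfies LICQ, hence ACQ, at $\hat x$: $\T_{\hat X_\sigma}(\hat x) = \Li_{\hat X_\sigma}(\hat x)$.

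From the union representation I would pass to tangent cones via $\T_{X^{KDB}(t)}(\hat x) = \bigcup_\sigma \T_{\hat X_\sigma}(\hat x)$ and take polars, using $(\bigcup_\sigma \C_\sigma)^\circ = \bigcap_\sigma \C_\sigma^\circ$, to obtain
\begin{equation*}
    \T_{X^{KDB}(t)}(\hat x)^\circ = \bigcap_\sigma \Li_{\hat X_\sigma}(\hat x)^\circ .
\end{equation*}
Writing $C_0 := \Li_{X^{KDB}(t)}(\hat x)$, the vanishing of $\nabla\Phi_i^{KDB}$ at kink indices means $C_0$ carries no constraint from $i \in K$, so $\Li_{X^{KDB}(t)}(\hat x)^\circ = C_0^\circ$ is the cone generated by the common active gradients, whereas $\Li_{\hat X_\sigma}(\hat x)^\circ = C_0^\circ + \sum_{i \in K} \operatorname{cone}\{\text{branch-}\sigma(i)\text{ gradients of } i\}$, where the branches contribute $\operatorname{cone}\{-\nabla G_i(\hat x),\nabla H_i(\hat x)\}$ or $\operatorname{cone}\{\nabla G_i(\hat x),-\nabla H_i(\hat x)\}$.

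The main obstacle is this final polar computation, and it is exactly where MPVC-LICQ is indispensable. Because all involved gradients are linearly independent, every $s \in \bigcap_\sigma \Li_{\hat X_\sigma}(\hat x)^\circ$ has a \emph{unique} representation in terms of them; comparing the representations forced by two selections that differ only at a single kink index $i$ — one confining the $\nabla G_i,\nabla H_i$-coefficients to $\operatorname{cone}\{-\nabla G_i,\nabla H_i\}$ and the other to $\operatorname{cone}\{\nabla G_i,-\nabla H_i\}$ — forces both coefficients to vanish. Iterating over $i \in K$ yields $s \in C_0^\circ$, whence $\bigcap_\sigma \Li_{\hat X_\sigma}(\hat x)^\circ = C_0^\circ = \Li_{X^{KDB}(t)}(\hat x)^\circ$. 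Together with the trivial inclusion $\T_{X^{KDB}(t)}(\hat x) \subseteq \Li_{X^{KDB}(t)}(\hat x)$ this gives $\T_{X^{KDB}(t)}(\hat x)^\circ = \Li_{X^{KDB}(t)}(\hat x)^\circ$, i.e.\ standard GCQ at $\hat x$. I expect the sign bookkeeping in the inclusions and the verification of the local union identity to be the routine-but-delicate parts, while the conceptual heart is the linear-independence-driven collapse of the kink cones in the intersection of polars.
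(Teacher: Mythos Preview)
Your proposal is correct and follows essentially the same strategy as the paper's proof: a branch decomposition of $X^{KDB}(t)$ at the kink indices $K = I_\Phi^{*,0}(\hat x,t)\cap I_\Phi^{0,*}(\hat x,t)$ (the paper's $I_\Phi^{00}$), LICQ for each branch via MPVC-LICQ, the union formula for the tangent cone, and then the linear-independence argument to show that membership in every branch polar forces the $\nabla G_i,\nabla H_i$-coefficients at kink indices to vanish, yielding $\T_{X^{KDB}(t)}(\hat x)^\circ \subseteq \Li_{X^{KDB}(t)}(\hat x)^\circ$. The only cosmetic difference is that the paper parametrizes branches by subsets $J\subseteq I_\Phi^{00}$ and writes its auxiliary NLPs with constraints on all relevant indices (including inactive ones in $J^{+-},J^{-+}$), whereas you parametrize by selections $\sigma:K\to\{1,2\}$ and keep only active constraints plus branch inequalities; since inactive constraints do not affect tangent cones, the computations coincide.
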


\begin{proof}
    Let $N(x^*)$ be a neighborhood of $x^*$ and a $\bar t > 0$.
    Now consider an arbitrary $t \in (0,\bar t]$ and  $\hat x \in N(x^*) \cap X^{KDB}(t)$ and define the index sets
    \begin{eqnarray*}
        I_\Phi^{0-} &:=& \{i \in I_\Phi(\hat x,t) \mid H_i(\hat x) = t, \; G_i(\hat x) < 0 \}, \\
        I_\Phi^{-0} &:=& \{i \in I_\Phi(\hat x,t) \mid H_i(\hat x) < t, \; G_i(\hat x) = 0 \}, \\
        I_\Phi^{00} &:=& \{i \in I_\Phi(\hat x,t) \mid H_i(\hat x) = t, \; G_i(\hat x) = 0 \}, \\
        I_\Phi^{+0} &:=&  \{i \in I_\Phi(\hat x,t) \mid H_i(\hat x) > t, \; G_i(\hat x) = 0 \},\\
        I_\Phi^{0+} &:=&  \{i \in I_\Phi(\hat x,t) \mid H_i(\hat x) = t, \; G_i(\hat x) > 0 \},\\
        J^{+-} &:=& \{i =1,\ldots,l \mid H_i(\hat x) > t, \; G_i(\hat x) < 0 \},\\
        J^{-+} &:=& \{i =1,\ldots,l \mid 0 < H_i(\hat x) < t, \; G_i(\hat x) > 0 \}.
    \end{eqnarray*}
    The definition of these index sets implies 
    \begin{eqnarray*}
         I_\Phi^{0-} \cup I_\Phi^{-0} \cup I_\Phi^{00} \cup I_\Phi^{+0} \cup I_\Phi^{0+} &=& I_\Phi(\hat x,t), \\
        (J^{+-} \cup J^{-+}) \cap (I_\Phi(\hat x,t) \cup I_H(\hat x)) &= & \emptyset.
    \end{eqnarray*}
    By choosing $N(x^*)$ and $\bar t > 0$ sufficiently small, we can guarantee the inclusions
    \begin{eqnarray*}
        I_\Phi^{0-} \cup I_\Phi^{00} \cup I_\Phi^{0+} \cup I_H(\hat x) &\subseteq& I_{0}, \\
        I_\Phi^{-0} \cup I_\Phi^{00} \cup I_\Phi^{+0}  &\subseteq& I_{00} \cup I_{+0},
    \end{eqnarray*}
    and, due to MPVC-LICQ, that the following gradients are linearly independent:
    \begin{equation*}
        \nabla H_i(\hat x) \; (i \in I_\Phi^{0-} \cup I_\Phi^{00}  \cup I_\Phi^{0+} \cup I_H(\hat x) ), \quad
        \nabla G_i(\hat x) \; (i \in I_\Phi^{-0} \cup I_\Phi^{00}  \cup I_\Phi^{+0}).
    \end{equation*}

    For all  $J \subseteq I_\Phi^{+0}$, we define the auxiliary problem NLP$(\hat x,J)$  as
    \begin{eqnarray*}
       \min f(x) & \st & H_i(x) \geq t, \; G_i(x) \leq 0 \quad \forall i \in I_\Phi^{0-} \cup J  \cup I_\Phi^{+0} \cup J_{+-} \\
       && 0 \leq H_i(x) \leq t, \; G_i(x) \geq 0 \quad  \forall i \in I_\Phi^{-0} \cup (I_\Phi^{00} \setminus J)  \cup I_\Phi^{0+} \cup I_H(\hat x) \cup J_{-+}
    \end{eqnarray*}
    and denote its feasible set by $X(J)$.
    Then, $\hat{x} \in X(J)$ and $X(J) \subseteq X^{KDB}(t)$.
    The latter implies
    \begin{equation*}
        \bigcup_{J \subseteq I_\Phi^{+0}} T_{X(J)}(\hat x) \subseteq T_{X^{KDB}(t)}(\hat x).
    \end{equation*}
    To see the opposite inclusion, consider an arbitrary $d \in T_{X^{KDB}(t)}(\hat x)$ and let $\{x^k\} \subseteq X^{KDB}(t)$ and $\{\tau_k\} \geq 0$ with $x^k \to x^*$ and $\tau_k(x^k-x^*) \to d$.
    Then for all $i \in I_\Phi^{0-} \cup I_\Phi^{+0} \cup J^{+-}$ it is easy to see that $H_i(x^k) \geq t$ and $G_i(x^k) \leq 0$ for all $k$ large.
    Analogously, for all $i \in I_\Phi^{-0} \cup I_\Phi^{0+} \cup I_H(\hat x) \cup J^{-+}$ we have $H_i(x^k) \in  [0,t]$ and $G_i(x^k) \geq 0$ for all $k$ large.
    For all $i \in I_\Phi^{00}$ either one of the two conditions can be satisfied in $x^k$.
    But since $I_\Phi^{00}$ is finite, we can assume w.l.o.g. that for some $J \subseteq I_\Phi^{00}$ we have
    \begin{eqnarray*}
        H_i(x^k) \geq t, \; G_i(x^k) \leq 0 && \forall i \in J \\
        0 \leq H_i(x^k) \leq t, \; G_i(x^k) \geq 0 &&  \forall i \in I_\Phi^{00} \setminus J
    \end{eqnarray*}
    for all $k$ large.
    This implies $x^k \in X(J)$ for all $k$ large and thus $d \in T_{X(J)}(\hat x)$.
    We thus know
    \begin{equation}\label{eq:nonsmoothTangentCone}
        T_{X^{KDB}(t)}(\hat x) = \bigcup_{J \subseteq I_\Phi^{00}} T_{X(J)}(\hat x).
    \end{equation}
    
    To compute $T_{X(J)}(\hat x)$ for $J \subseteq I_\Phi^{00}$, note that the active gradients for NLP($J$) in $\hat x$ are
    \begin{eqnarray*}
        \nabla H_i(\hat x) && \forall i \in I_\Phi^{0-} \cup I_\Phi^{00} \cup I_\Phi^{0+} \cup I_H(\hat x), \\
        \nabla G_i(\hat x) && \forall i \in I_\Phi^{-0} \cup I_\Phi^{00} \cup I_\Phi^{+0}.
    \end{eqnarray*}
    These are linearly independent by choice of $N(x^*)$ and $\bar t$ and thus LICQ and ACQ for NLP($\hat x,J$) hold in $\hat x$.
    Using \cite[Theorem~3.1.9]{BaS12} we thus obtain
    \begin{equation*}
        T_{X^{KDB}(t)}(\hat x)^\circ = \bigcap_{J \subseteq I_\Phi^{00}} T_{X(J)}(\hat x)^\circ = \bigcap_{J \subseteq I_\Phi^{00}} L_{X(J)}(\hat x)^\circ,
    \end{equation*}
    where for $J \subseteq I_\Phi^{00}$ the polar of the linearization cones $L_{X(J)}(\hat x)$ is given by
    \begin{eqnarray*}
        L_{X(J)}(\hat x)^\circ = \Big\{ & s = & - \sum_{i \in  I_\Phi^{0-} \cup I_\Phi^{00} \cup I_\Phi^{0+} \cup I_H(\hat x)} \eta^H_i \nabla H_i(\hat x) + \sum_{i \in I_\Phi^{-0} \cup I_\Phi^{00} \cup I_\Phi^{+0}} \eta^G_i \nabla G_i(\hat x) \mid \\
        && \eta^H_{I_\Phi^{0-} \cup J \cup I_H(\hat x)} \geq 0, \quad
        \eta^H_{(I_\Phi^{00} \setminus J) \cup I_\Phi^{0+}} \leq 0, \quad
        \eta^G_{I_\Phi^{+0} \cup J} \geq 0, \quad
        \eta^G_{(I_\Phi^{00} \setminus J) \cup I_\Phi^{-0}} \leq 0 \Big\}
    \end{eqnarray*}
    according to \cite[Theorem~3.2.2]{BaS12}.
    To complete the proof, it remains to show
    \begin{equation*}
        T_{X^{KDB}(t)} (\hat x)^\circ \subseteq  L_{X^{KDB}(t)} (\hat x)^\circ.
    \end{equation*}
    Here, the polar of the linearization cone $L_{X^{KDB}(t)} (\hat x)$ is given by
    \begin{eqnarray*}
        L_{X^{KDB}(t)}(\hat x)^\circ = \Big\{ s \in \R^n \mid& s =& - \sum_{i \in I_H(\hat x)} \nu_i \nabla H_i(\hat x) + \sum_{i \in I_\Phi(\hat x,t)} \delta_i  G_i(\hat x) \nabla H_i(\hat x) \\
        && + \sum_{i \in I_\Phi(\hat x,t)} \delta_i (H_i(\hat x)-t) \nabla G_i(\hat x), \quad
        \nu \geq 0, \quad
        \delta \geq 0 \Big\}
    \end{eqnarray*}
    To this end, consider an arbitrary $s \in T_{X^{KDB}(t)} (\hat x)^\circ$.
    Then by our previous considerations, we know that $s \in L_{X(J)}(\hat x)^\circ$ for all $J \subseteq I_\Phi^{00}$.
    Due to the linear independence of the gradients, this implies that the representation of $s$ does not depend on $\nabla G_i(\hat x), \nabla H_i(\hat x)$ with $i \in I_\Phi^{00}$ and thus
    \begin{eqnarray*}
        && s = - \sum_{i \in  I_\Phi^{0-} \cup I_\Phi^{0+} \cup I_H(\hat x)} \eta^H_i \nabla H_i(\hat x) + \sum_{i \in I_\Phi^{-0} \cup I_\Phi^{+0}} \eta^G_i \nabla G_i(\hat x)\\
        \text{with} && \eta^H_{I_\Phi^{0-} \cup I_H(\hat x)} \geq 0, \quad
        \eta^H_{ I_\Phi^{0+}} \leq 0, \quad
        \eta^G_{I_\Phi^{+0} } \geq 0, \quad
        \eta^G_{ I_\Phi^{-0}} \leq 0.
    \end{eqnarray*}
    This shows $s \in L_{X^{KDB}(t)}(\hat x)^\circ$ and completes the proof of GCQ for $R^{KDB}(t)$ in $\hat x$.
\end{proof}

\subsection{Theoretical Comparison of all four Regularization Schemes}\label{ssec:theoreticalComparison}

Our analysis of the theoretical properties of these four regularization schemes allows us to compare them with respect to  limit of KKT points, the limit of $\eps$-stationary points,  and the regularity of their feasible sets.

We begin with the limit of KKT points of the regularized problems, see Table \ref{tab:comparisonExact}.
In this regard, the L-shaped regularization is the clear victor, because it guarantees the strongest stationarity of $x^*$ under the weakest assumptions.

\begin{table}[htb]
    \caption{Comparison of the limit $x^*$ of KKT points} \label{tab:comparisonExact}
    \centering
    \begin{tabular}{llll}
        \toprule
        regularization & CQ in $x^*$ & additional assumptions & stationarity of $x^*$ \\
        \midrule
         global & MPVC-MFCQ & none & T-stationarity \\
         local & MPVC-CPLD & none & T-stationarity \\
         L-shaped & MPVC-CPLD & none & M-stationarity \\
         nonsmooth & MPVC-CPLD & yes\footnotemark & M-stationarity\\
         \bottomrule
    \end{tabular}
\end{table}

However, when we instead consider the limit of $\eps$-stationary points, the picture changes, see Table \ref{tab:comparisonInexact}.
The only regularization, which guarantees more than weak stationarity in this setting, is the global regularization.
Furthermore, this regularization does not require additional assumptions.

\begin{table}[htb]
    \caption{Comparison of the limit $x^*$ of $\eps$-stationary points} \label{tab:comparisonInexact}
    \centering
    \begin{tabular}{llll}
        \toprule
        regularization & CQ in $x^*$ & additional assumptions & stationarity of $x^*$ \\
        \midrule
         global & MPVC-MFCQ & none & T-stationarity \\
         local & MPVC-MFCQ & none & weak stationarity \\
         L-shaped & MPVC-MFCQ & yes\footnotemark & weak stationarity \\
         nonsmooth & MPVC-MFCQ & yes\footnotemark & weak stationarity\\
         \bottomrule
    \end{tabular}
\end{table}

With regard to the the regularity of the resulting feasible sets, see Table \ref{tab:comparisonRegularity}, the global regularization again looks very promising.
However, a closer look reveals that both the L-shaped and the nonsmooth regularization also satisfy LICQ in all feasible points $x$ with $(G_i(x),H_i(x)) \neq (0,t)$ for all $i=1,\ldots,l$.
In contrast, the feasible set of the local regularization can satisfy LICQ only in points $x$, where $(G_i(x),H_i(x)) \approx (0,0)$ for all $i=1,\ldots,l$.

\begin{table}[htb]
    \caption{Comparison of the local regularity of the feasible sets} \label{tab:comparisonRegularity}
    \centering
    \begin{tabular}{llc}
        \toprule
        regularization & MVPC-CQ at $x^*$ & standard CQ around $x^*$ \\
        \midrule
         global & MPVC-MFCQ & MFCQ \\
         local & MPVC-LICQ & ACQ \\
         L-shaped & MPVC-LICQ & GCQ \\
         nonsmooth & MPVC-LICQ & GCQ\\
         \bottomrule
    \end{tabular}
\end{table}

\addtocounter{footnote}{-2}\footnotetext{$\forall i \in I_{00}$: $H_i(x^k) \geq t_k$ or $G_i(x^k) > 0$.}
\addtocounter{footnote}{+1}\footnotetext{$\forall i \in I_{00}$: $\liminf_{k \to \infty} \frac{\eps_k}{G_i(x^k)} \leq 0$.}
\addtocounter{footnote}{+1}\footnotetext{$\forall i \in I_{00}$: $H_i(x^k) \geq t_k$ or $G_i(x^k) > 0$ and $\liminf_{k \to \infty} \frac{\eps_k}{G_i(x^k)} \leq 0$.}
\section{Numerical Comparison}\label{sec:numerical}

In this chapter, we test and compare the presented regularization strategies numerically.
We consider problems arising from truss topology optimization and optimal control of aircraft trajectories.
Before presenting these models and the numerical results, let us describe the procedure we followed in order to test the numerical behavior of the regularization methods, whose theoretical properties we discussed before.

\begin{algorithm}[htb]
    \begin{algorithmic}[1]
        \REQUIRE An initial point $x^0 \in \R^n$, an initial regularization parameter $t_0 > 0$, a reduction parameter $\sigma \in (0,1)$, a minimum regularization parameter $t_{\min} \in (0, t_0)$, and a feasibility tolerance $\text{tol} > 0$.
        Set $k := 0$.
        \WHILE{$t_k\geq t_{\min}$ and $\text{maxVio} (x^k) > \text{tol}$}
            \STATE Compute a solution $x^{k+1}$ of the regularized problem $R(t_k)$ using $x^k$ as initial point.
            
            \STATE Decrease the regularization parameter $t_{k+1} := t_k \cdot \sigma$ and update $k \gets k+1$.
        \ENDWHILE
        \ENSURE The final iterate $x^* := x^k$ and the corresponding function value $f(x^*)$.
    \end{algorithmic}
    \caption{Abstract regularization algorithm}\label{alg:regularization}
\end{algorithm}

We used Algorithm \ref{alg:regularization} for all the test examples and regularization schemes in order to ensure the methods are tested under the same conditions.
It was implemented in MATLAB using the NLP solver \texttt{fmincon} with the \texttt{SQP} option to solve $R(t_k)$ on each iteration.
The maximum constraint violation in a point $x$ is defined as
$$
   \text{maxVio}(x) = \max_{i=1,...,l}\{G_i(x) H_i(x)\},
$$
and is used to ensure that the vanishing constraints are fulfilled.
The parameters for the algorithm were chosen as
\begin{equation*}
    t_0 = 1, \quad
    \sigma = 0.1, \quad 
    t_{\min} = 10^{-8}, \quad
    \text{tol} = 10^{-6}.
\end{equation*}
Thus, the algorithm terminates either when $x^k$ is sufficiently feasible, i.e. $\text{maxVio}(x^k) \leq 10^{-6}$, or when the regularization parameter becomes too small, i.e. $t_k < 10^{-8}$.
In the latter case, the regularized problem is numerically almost identical to the original MPVC and a further decrease of $t_k$ is not beneficial.

For the local regularization, we used the regularization function
\begin{equation*}
   \theta (x) := \frac{2}{\pi} \sin \left( \frac{\pi}{2} x + \frac{3 \pi}{2} \right) + 1 .
\end{equation*}

In the subsequent sections, the results are given in terms of objective function value $f(x^*)$, the maximum constraint violation of \emph{all} constraint functions in $x^*$ and the number of regularization iterations executed.
They are also compared to the result achieved by applying the NLP solver \texttt{fmincon} directly to the MPVC.

\subsection{Academic Example}\label{ssec:acdemicTruss}

To show the positive influence of regularization methods, we first consider the following two-dimensional academic truss topology optimization problem taken from \cite{ChG97}:

\begin{minipage}[c]{0.55\textwidth}
    \begin{eqnarray}
       \min_{x \in \R^2} && f(x) = 4x_1+2x_2  \nonumber\\
       \st && H_1(x) = x_1 \geq 0, \nonumber\\
       && H_2(x) = x_2 \geq 0, \label{eq:academicTruss}\\
       && G_1(x)H_1(x) = (5\sqrt{2}-x_1-x_2)x_1 \leq 0, \nonumber\\
       && G_2(x)H_2(x) = (5-x_1-x_2)x_2 \leq 0 .\nonumber
    \end{eqnarray}
\end{minipage}
\qquad
\begin{minipage}[c]{0.44\textwidth}
    \begin{tikzpicture}[scale = 0.5]
        \fill[blue!30] ({sqrt{50}},0) -- (8,0) -- (8,8) -- (0,8) -- (0,{sqrt{50}}) -- cycle;
        
        \draw[->] (-1,0) -- (0,0) node[below left]{$0$} -- ({sqrt(50)},0) node[below]{$5\sqrt{2}$} -- (8.5,0) node[below]{$x_1$};
        \draw[->] (0,-1) -- (0,5) node[left]{$5$} -- (0,{sqrt(50)}) node[left]{$5\sqrt{2}$} -- (0,8.5) node[left]{$x_2$};
        
        \draw[, line width = 1.5pt, blue] (0,8) -- (0,{sqrt{50}}) --  ({sqrt{50}},0) -- (8,0);
        \draw[- , line width = 1.5pt, blue] (0,{sqrt{50}}) -- (0,5);
        
        \node[circle, fill=blue, inner sep=2pt, minimum size=3pt,label=above right:{$x^\circ$}] at (0,0) {};
        \node[circle, fill=red, inner sep=2pt, minimum size=3pt,label= right:{$x^*$}] at (0,5) {};
        \node[circle, fill=black, inner sep=2pt, minimum size=3pt,label=right:{$x^+$}] at (0,{sqrt(50)}) {};
    \end{tikzpicture}
\end{minipage}

Here, the weight of a truss consisting of four bars shall be minimized and the variables $x_1$, $x_2$ represent the cross-sectional areas of two different groups of bars.
The feasible set of this problem consists of an unbounded polyhedron with the attached line segment $ \{0\} \times [5, 5\sqrt{2}]$ and the isolated point $(0,0)^T$.

The origin $x^\circ = (0,0)^T$ is the global minimizer of the problem, and $x^* = (0,5)^T$ is a local minimizer.
Additionally, $x^\circ, x^*$ are the only M-stationary points of this MPVC.
However, geometry indicates that numerical methods may also converge to $x^+ =(0,5\sqrt{2})^T$, which is a weakly stationary point but not a local minimizer.
To illustrate this behavior, we chose a grid of $676$ initial points $x^0$ in $[-5, 20]\times[-5,20]$ and attempted to solve \eqref{eq:academicTruss} from those using the four regularization schemes as well as applying \texttt{fmincon} directly to the MPVC. 

\begin{figure}
    \centering
    \begin{subfigure}[b]{.32\linewidth}
        \centering
        \includegraphics[width=\linewidth]{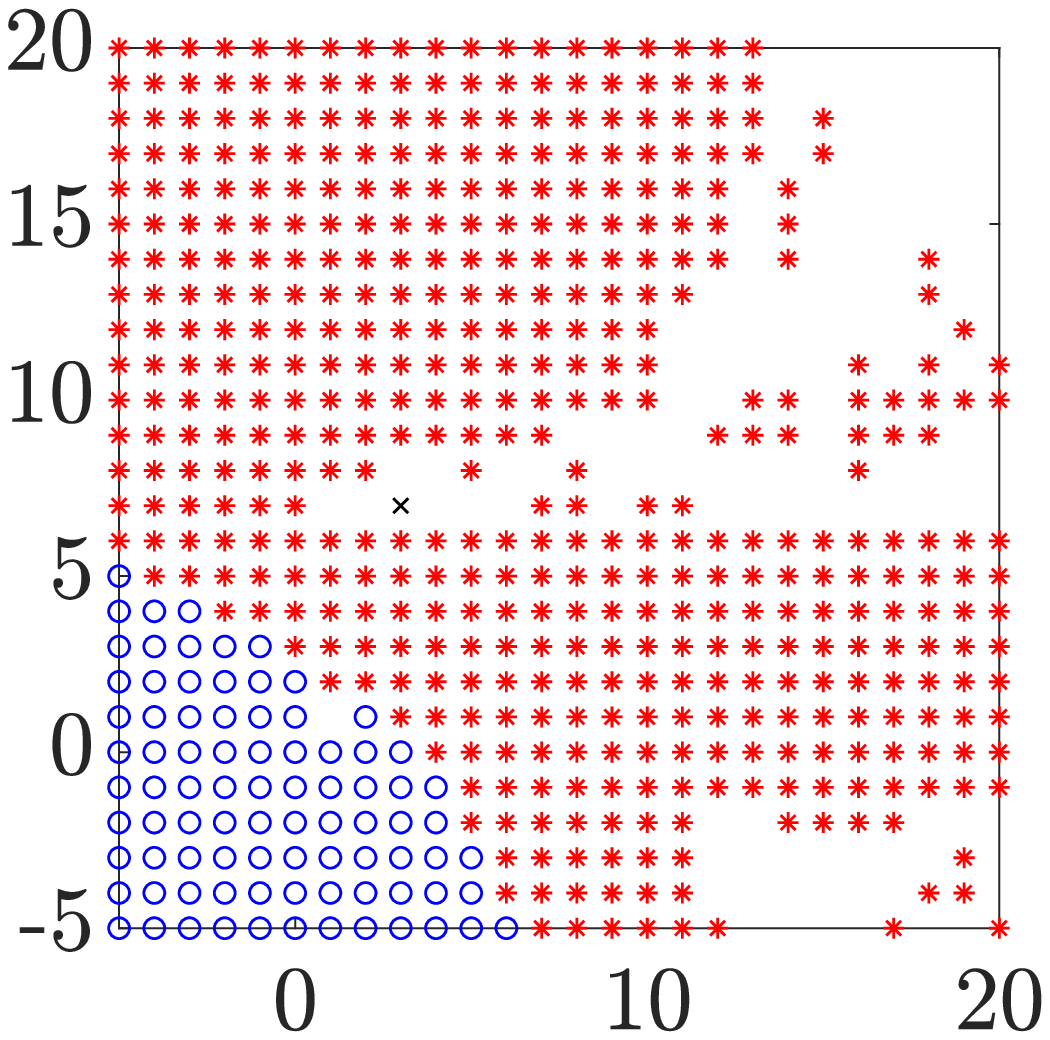}
        \caption{no regularization}\label{fig:diracademic}
    \end{subfigure}
    \begin{subfigure}[b]{.32\linewidth}
        \centering
        \includegraphics[width=\linewidth]{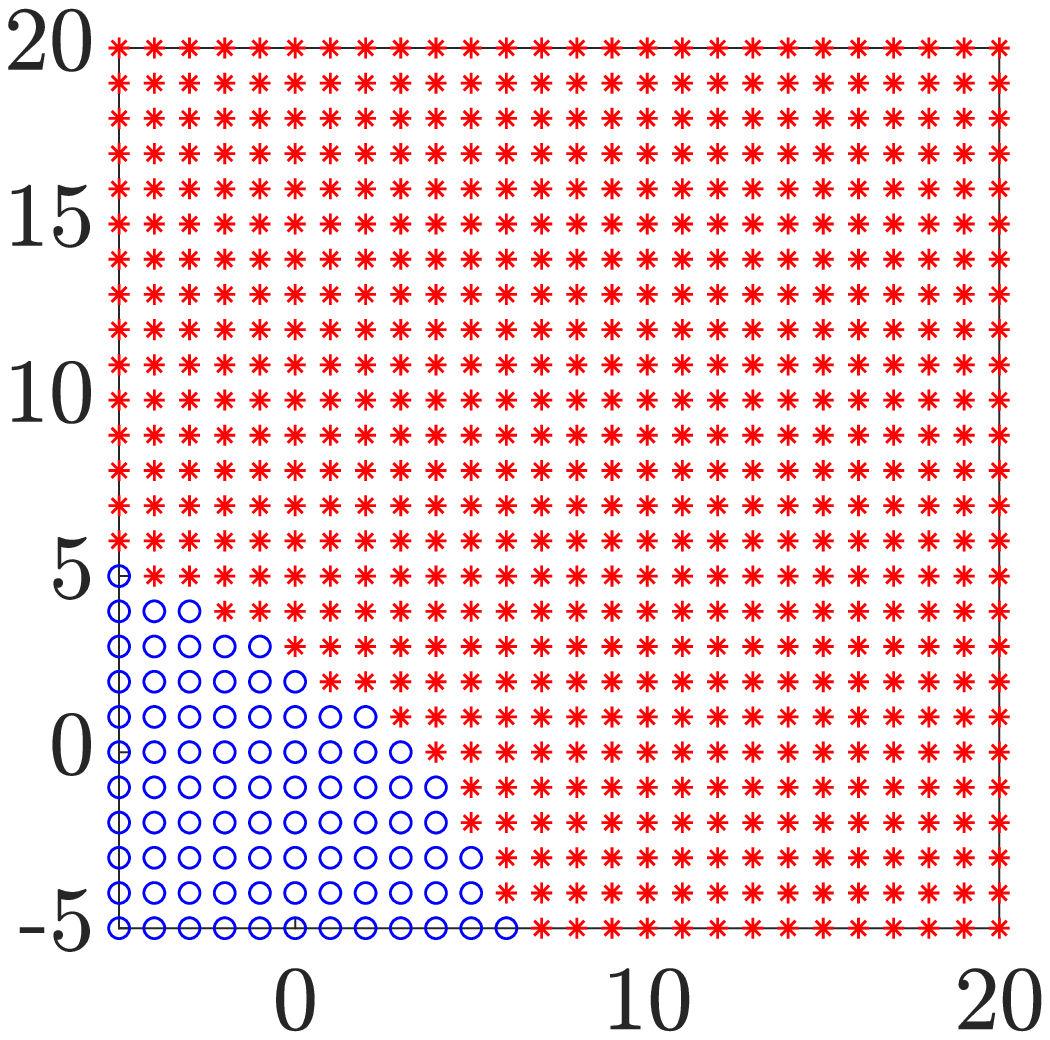}
        \caption{global regularization}\label{fig:schoacademic}
    \end{subfigure}
    \begin{subfigure}[b]{.3\linewidth}
        \centering
        \includegraphics[width=\linewidth]{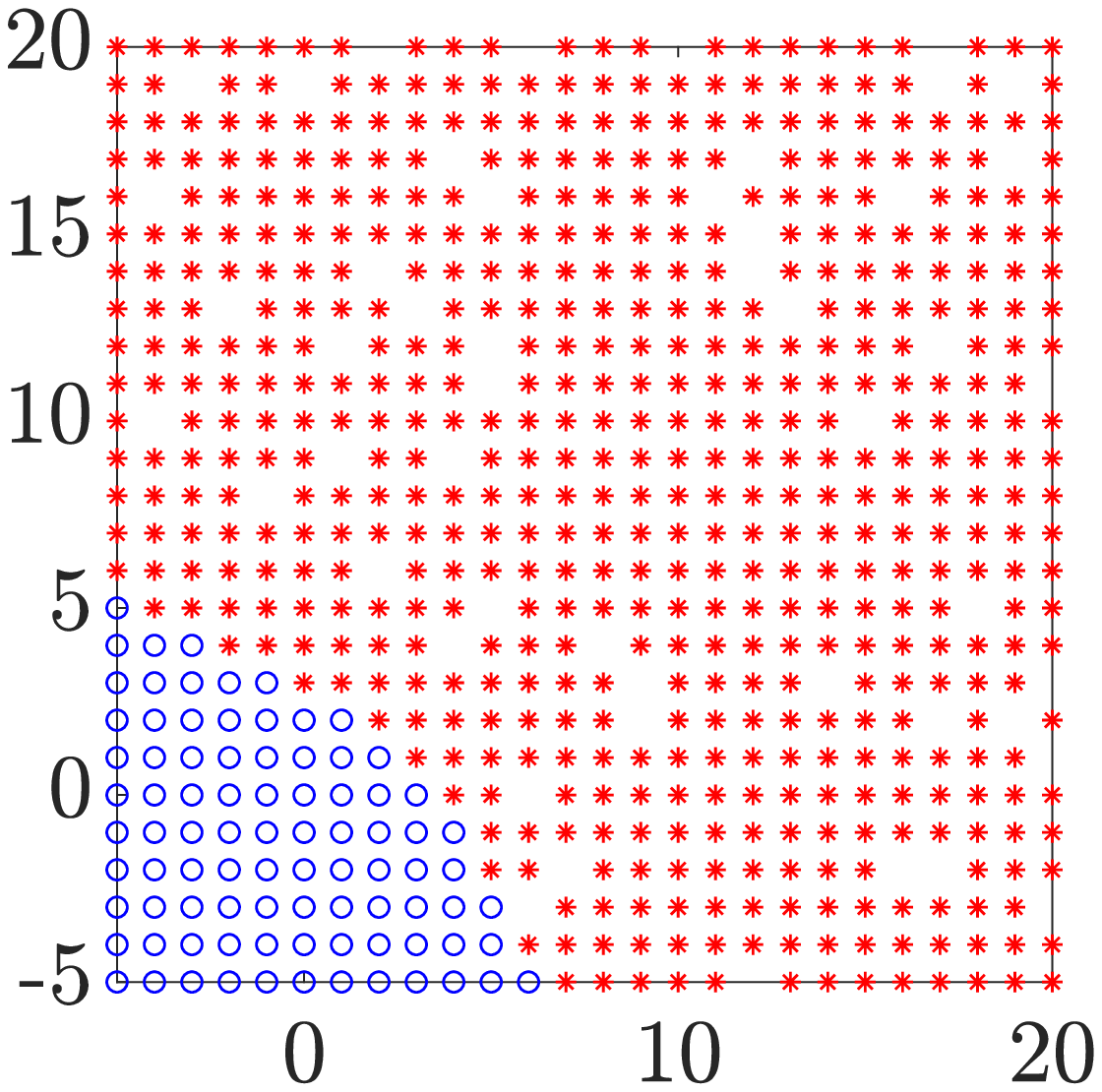}
        \caption{local regularization}\label{fig:steffacademic}
    \end{subfigure}
    \\
    \begin{subfigure}[b]{.32\linewidth}
        \centering
        \includegraphics[width=\linewidth]{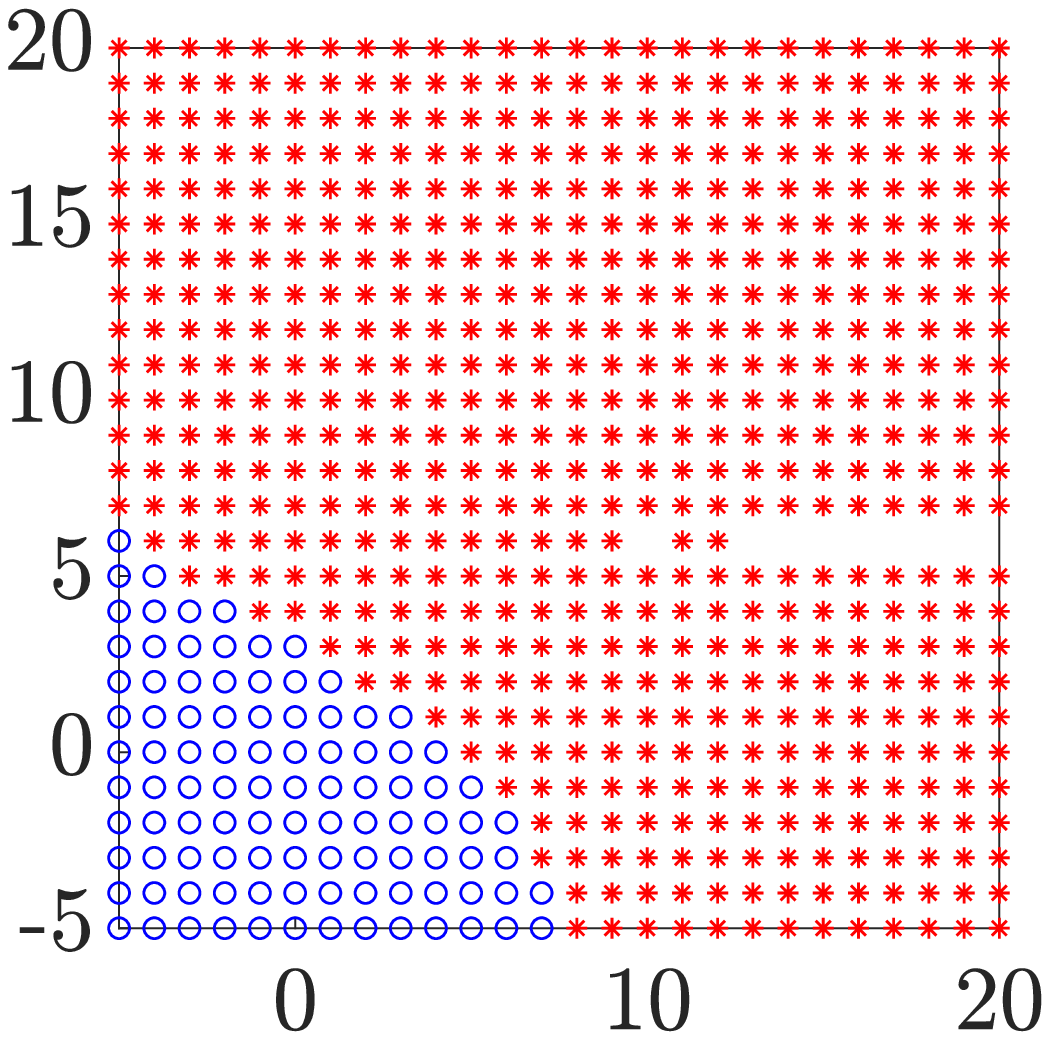}
        \caption{L-shaped regularization}\label{fig:schwacademic}
    \end{subfigure}
    \begin{subfigure}[b]{.3\linewidth}
        \centering
        \includegraphics[width=\linewidth]{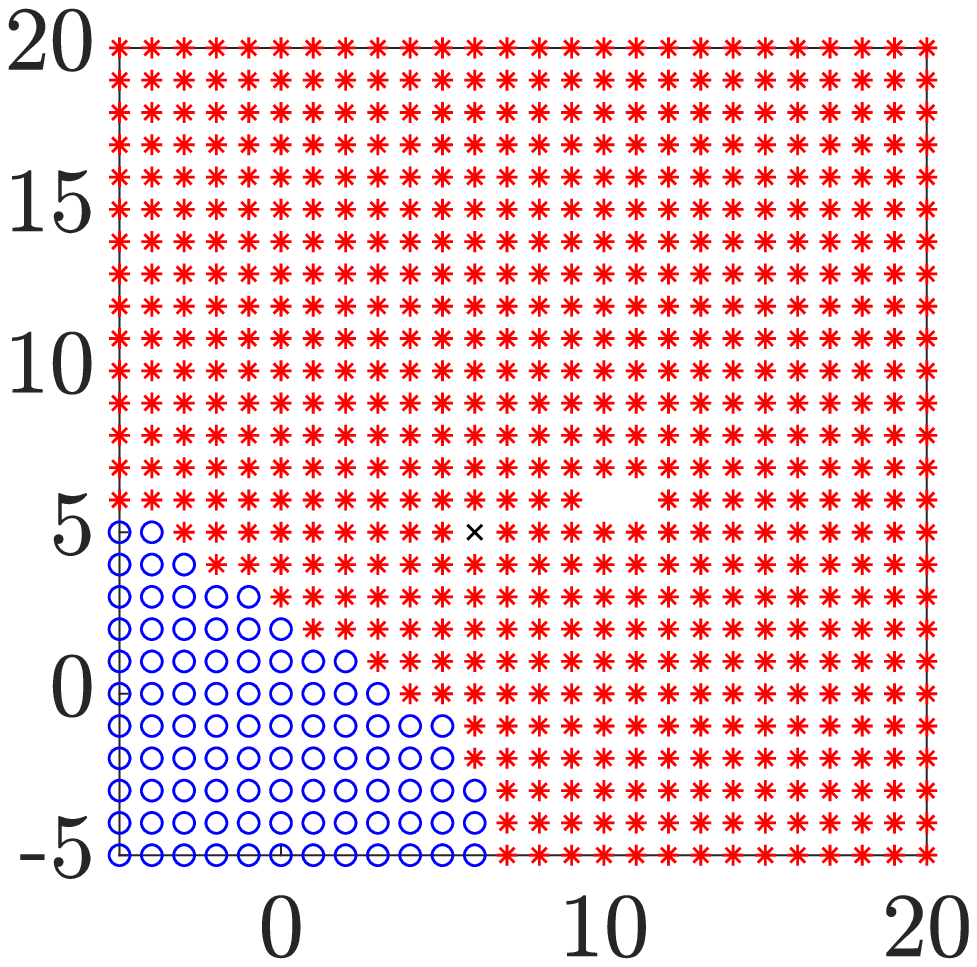}
        \caption{nonsmooth regularization}\label{fig:kadracademic}
    \end{subfigure}
    \caption{Results for the academic example \eqref{eq:academicTruss}} \label{fig:academicexample}
\end{figure}

The results are given in Figure \ref{fig:academicexample}, where we marked an initial point with $\circ$, if the solution found differed less than $10^{-3}$ from $x^\circ$, with $*$, if the solution found was close to $x^*$, and with $+$, if the solution found was close to $x^+$.
The added total number of iterations needed to solve the problem from each initial point and the number of times the algorithm reached a solution close to $x^{\circ}$, $x^{*}$ or $x^{+}$ are gathered in Table \ref{tab:perfacademic}.

\begin{table}
    \centering
    \begin{tabular}{lccccc}
        \toprule
        method   & total iterations & $x^\circ$ & $x^*$ & $x^+$ & neither \\
        \midrule
        no regularization         &  676 &  85 & 453 & 1 & 137 \\
        global regularization     & 5482 &  86 & 590 & 0 &   0  \\
        local regularization      & 1353 &  87 & 539 & 0 &   50   \\
        L-shaped regularization   & 1352 & 100 & 567 & 0 &   9  \\
        nonsmooth regularization  & 1368 &  91 & 582 & 1 &   2  \\
        \bottomrule
    \end{tabular}
    \caption{Performance results for the academic example \eqref{eq:academicTruss}} \label{tab:perfacademic}
\end{table}

Applying \texttt{fmincon} directly without a regularization failed to recover one of the three points of interest for approximately 20\% of the initial points.
In contrast, the regularization methods found the global optimum $x^\circ$ or the local optimum $x^*$ for almost all initial values.
The global regularization succeeded for all initial values but also needed about four times as many iterations as the other methods.
The L-shaped and nonsmooth regularization performed similarly, whereas the local regularization failed to find one of the three points of interest for more initial points.
Interestingly, all five methods rarely terminated in the weakly stationary point $x^+$.

\subsection{Ten-bar Truss}\label{ssec:tenBarTruss}

Our next example is the ``ten-bar truss'', a well-known problem in the engineering literature, see, e.g., \cite{Kir90, ChG97, AHK13} for a more detailed background.
We consider a $2 \times 1$ sized truss consisting of six nodes and ten potential bars, where the two nodes on the left side are fixed (e.g. on a wall) and a force $f$ with $\|f\| = 1$ pulls down on the bottom right hand node.
The ground structure is depicted in Figure \ref{fig:tenbarGround}.

\begin{figure}
    \begin{subfigure}[b]{.32\linewidth}
        \centering
        \includegraphics[width=\linewidth]{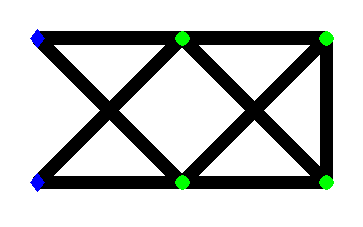}
        \caption{ground structure}\label{fig:tenbarGround}
    \end{subfigure}
    \begin{subfigure}[b]{.32\linewidth}
        \centering
        \includegraphics[width=\linewidth]{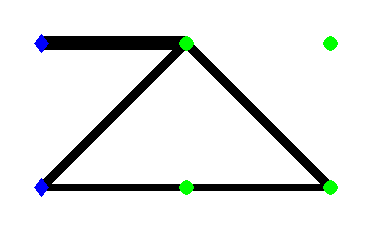}
        \caption{optimal truss}\label{fig:tenbarOpt}
    \end{subfigure}
    \begin{subfigure}[b]{.32\linewidth}
        \centering
        \includegraphics[width=\linewidth]{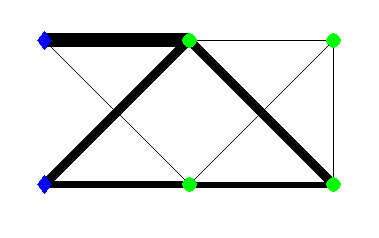}
        \caption{nonsmooth solution}\label{fig:tenbarOptKad}
    \end{subfigure}
    \caption{Results for ten-bar truss example \eqref{eq:tenbar}}\label{fig:tenbar}
\end{figure}

The aim is to minimize the weight of the truss.
However, if we  assume that the same material is used for all bars, we can minimize the volume instead.
To write down the corresponding optimization problem, we need some notation first.
For all bars $i=1,\ldots,10$, we denote the cross-sectional area of bar $i$ by $a_i$ and its length by $l_i$. 
Additionally to the variables $a_i$ ($i=1,\ldots,10$) we have eight auxiliary variables $u_j$ ($j=1,\ldots,8$) representing the nodal displacement of the four free nodes in $x$- and $y$-direction.
With this notation, we obtain the following optimization problem
\begin{eqnarray}
   \min_{a \in \R^{10}, u \in \R^8} \sum_{i=1}^{10}{l_i a_i} & \st & K(a)u = f, \nonumber\\
   && f^Tu \leq c, \nonumber\\
   && 0 \leq a_i \leq \bar{a} \quad \forall i=1,\ldots,10, \label{eq:tenbar}\\
   && (\sigma_i(a,u)^2-\bar{\sigma}^2)a_i \leq 0 \quad \forall i=1,\ldots, 10.\nonumber
\end{eqnarray}

Here, the matrix $K(a)$ is the global stiffness matrix of the truss and is given by
$$
   K(a) = \sum_{i=1}^{10}{a_i \frac{E}{l_i}\gamma_i \gamma_i^T} \in \R^{8\times8}
$$
with some vectors $\gamma_i \in \R^8$ and Young's modulus $E$.
For all bars $i=1,\ldots,10$ the vector $\gamma_i \in \R^8$ contains $-\cos(\alpha)$ in the components corresponding to a nodal displacement at one of the two end nodes of the bar $i$, where $\alpha$ is the angle between the respective nodal displacement axis and the bar axis.
This equation models force equilibrium and some other conditions.

The inequality $f^Tu \leq c$ bounds the compliance of the truss, i.e.\ the 
work caused by the force $f$.
Here, $c>0$ is a user-defined constant.

The box constraints $0 \leq a_i \leq \bar{a}$ ($i=1,\ldots,10$) ensure nonnegativity of the cross-sectional areas and allow a user-defined upper bound $\bar{a}>0$.
Additionally, one wants to impose bounds on the stress $\sigma_i(a,u)$ for the bar $i$, where
$$
   \sigma_i(a,u) = E \frac{\gamma_i^Tu}{l_i} \quad \forall i=1,
   \ldots,10,
$$
which is caused by the nodal displacement due to the force $f$.
This could be formulated as
$$
   \sigma_i(a,u)^2-\bar{\sigma}^2 \leq 0 \quad \forall i=1,\ldots,10,
$$
where $\bar{\sigma}>0$ is the user-defined threshold.
However, with this formulation, we would also bound the stress on those bars $i$ that do not appear in the final truss, i.e.\ those with $a_i=0$.
This is obviously not desirable, as it is unnecessarily restrictive.
We circumvent this by multiplying the inequalities above with $a_i$.
This eventually leads to the MPVC formulation given in \eqref{eq:tenbar}.

\begin{table}
    \centering
    \begin{tabular}{lccc}
    \toprule
    method & $V^*$ & constraint violation & iterations\\
    \midrule
    no regularization        & 8.0000 & 6.43708$e^{-11}$ & 1  \\
    global regularization    & 8.0000 & 1.5504$e^{-11}$ & 8\\
    local regularization     & 8.0000 & 1.8052$e^{-12}$&2 \\
    L-shaped regularization  & 8.0000 & 2.15399$e^{-11}$& 3 \\
    nonsmooth regularization & 8.1563 & 1.0415$e^{-15}$& 3\\
    \bottomrule
    \end{tabular}
    \caption{Results for ten-bar tuss}\label{tab:tenbar}
\end{table}

For this test, we chose the constants
\begin{equation*}
    E = 1, \quad
    c = 10, \quad
    \bar a = 100, \quad
    \bar \sigma = 1
\end{equation*}
and the initial point $a^0 = (1,\ldots,1) \in \R^{10}$ and $u^0 = K(a^0)^{-1}f$.

The algorithm terminated with the message \emph{Local minimizer found that satisfies the constraints} for all the methods, and computed the known optimal volume of $V^*=8.0000$ with negligible differences in the solutions for all methods except for the nonsmooth regularization.
The latter yielded a larger volume of $V^*=8.1563$ and a slightly different structure, see Figures \ref{fig:tenbarOpt} and \ref{fig:tenbarOptKad} for pictures of the corresponding trusses and Table \ref{tab:tenbar} for a comparison of the results for all the methods.

\subsection{Aerothermodynamic Problem}\label{ssec:aerothermodynamic}

During re-entry, aircrafts experience extreme thermal loads, which can be controlled by choosing an adequate trajectory using optimal control techniques; cf. e.g. \cite{Ger11, PMB13}.
Additionally, a liquid hydrogen active cooling system can be used.
However, this is only needed, when the heat loads exceed the radiative cooling abilities of the thermal protection materials; see \cite{AAS95, CPW09}.
Thus, a constraint on its activation must be imposed to avoid unnecessary fuel consumption.

\begin{figure}
    \centering
    \begin{tikzpicture}[scale = 0.9]
        \draw (0, 0) node[inner sep=0] {\includegraphics[width=4cm]{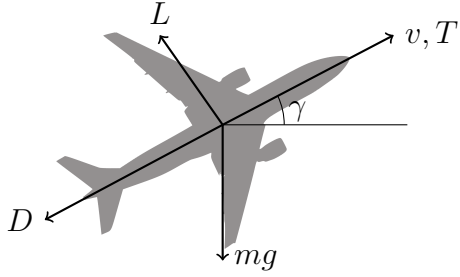}};
        \draw[->, thick] (0.3,0.3)--(2.8,1.62)  node[right]{$v, T$};
        \draw[->, thick] (0.3,0.3)--(-2.3,-1.1)  node[left]{$D$};;
        \draw[->, thick] (0.3,0.3)--(0.3,-1.7)  node[right]{$mg$};;
        \draw[->, thick] (0.3,0.3)--(-0.62,1.62)  node[above]{$L$};
        \draw[-] (0.3,0.3)--(3,0.3);
        \draw (1.2,0.3) arc(0:25:1);
        \draw (1.1,0.5) node[right]{$\gamma$};
    \end{tikzpicture}
    \caption{Aerodynamic forces on an aircraft}  \label{fig:aeroforces}
\end{figure}

Using optimal control terminology, the state variables of an aircraft are velocity $v = v(t)$, flight path angle $\gamma = \gamma (t)$, altitude $h = h(t)$ and total external heat load $Q_{T}=Q_T(t)$.
The control variables are angle of attack $C_L = C_L (t)$, thrust $T = T(t)$ and convected heat rate due to active cooling $\dot{Q_{c}} = \dot{Q_{c}}(t)$.
The forces defining the trajectory of the aircraft, pictured in Figure \ref{fig:aeroforces}, are aerodynamic lift $L$ and drag $D$, thrust $T$ and gravitational force $mg$.

The equations describing the trajectory and heat rate evolution at the stagnation point are
\begin{align*}
    \dot{v}&=  \frac{T - D(v,h;C_L)}{m} - g(h) \sin \gamma, \\
    \dot{\gamma}&= \frac{ L(v,h;C_L)}{m v} + \cos \gamma  \left( \frac{v}{r(h)} - \frac{g(h)}{v}  \right),  \\
    \dot{h}&= v \sin \gamma,  \\
    \dot{Q_{T}} &= K_{e} \sqrt{\frac{\rho (h)}{R_{n}} } v^{3} - \dot{Q_{c}},
\end{align*}
where $g(h)$ is the gravitational acceleration, $r(h)$ the distance to the center of the Earth, and $\rho(h)$ the air density for a given height $h$, $R_N$ is the nose radius, and $K_e$ is a constant.
Box constraints for the controls have to be imposed:
\begin{align*}
    0.01 &\leq C_L \leq 0.18326, \\
    0\ N &\leq T \leq 10^{7} N, \\
    0\ W/cm^{2} &\leq \dot{Q_{c}} \leq  0.5 \ W/cm^{2}.
\end{align*}

The landing condition translates to
\begin{align*}
    h(t_{f}) \leq 500\ m,
\end{align*}
and the cooling system activation constraint is given by
\begin{align*}
    (\dot{Q}_{rad.max}-\dot{Q_T}) \  \dot{Q_{c}} \leq 0,
\end{align*} 
for which we have chosen the value $\dot{Q}_{rad,max}=1.7\ W/cm^2$, which corresponds to reaching temperatures over 750 K.

Finally, the objective function to minimize is the final heat load $Q_T(t_f)$.
We now have a general optimal control problem of the form
\begin{equation}\label{eq:OpCoProb}
    \begin{array}{ll}
       \min & f(x_{f},u_{f}) \\
       \st  & \dot{x} = F(x,u),\\
       & h(x,u) = 0, \\
       & g(x,u) \leq 0, \\
       & H(x,u) \geq 0, \\
       & G(x,u)H(x,u)\leq 0, 
    \end{array}
\end{equation}
where $x = x(t) = (v(t),\gamma(t), h(t), Q_{T}(t))$ is the vector of state variables and $u = u(t) = (C_L(t), T(t), \dot{Q_{c}}(t))$ is the vector of control variables.

Discretizing in time and choosing a suitable integration scheme $\Phi (\cdot)$ with step $\delta$
\begin{align*}
    y_{i+1} = y_{i} + \delta \Phi (t_{i}, t_{i+1}, y_{i}, y_{i+1})
\end{align*}
where $t_{i+1} = t_{i} + \delta$ and $y_{i} \approx y(t_{i})$ for  $i=0,1,...,N-1$, we can approximate $(x(t_{i}), u(t_{i})) \approx (x_{i},u_{i})$, thus obtaining the following MPVC from problem \eqref{eq:OpCoProb}:
\begin{equation}\label{eq:DiscreteMPVC}
    \begin{array}{ll}
       \min & f(x_{N},u_{N}) \\
       \text{s.t.}    & x_{i+1} - x_{i} - \delta \Phi (t_{i}, t_{i+1}, x_{i}, x_{i+1}, u_{i}, u_{i+1}) = 0 \quad (i = 0,1,...,N-1),\\
       & h(x_i,u_i) = 0  \quad (i = 0,1,...,N-1),\\
       & g(x_i,u_i) \leq 0  \quad (i = 0,1,...,N-1),\\
       & H(x_i,u_i) \geq 0  \quad (i = 0,1,...,N-1),\\
       & G(x_i,u_i)H(x_i,u_i)\leq 0 \quad (i = 0,1,...,N-1).
    \end{array}
\end{equation}

We have chosen the implicit Euler method as integration scheme for this test with 30 time nodes and used a free final time transformation.
The initial values for the state and control variables were
\begin{equation*}
    v_0 = 0.2 \text{ km/s}, \quad
    \gamma_0 = 0 \text{ rad}, \quad
    h_0 = 12 \text{ km}, \quad
    Q_{T,0} = 0.0 \text{ J/cm}^2.
\end{equation*}

The obtained trajectories for each regularization method as well as the heat load evolution are depicted in Figure \ref{fig:trajectory}.
For all five methods, the algorithm terminated with the message \emph{Local minimizer found that satisfies the constraints}.
The lowest final heat loads was obtained by the L-shaped regularization.
The direct approach without regularization, the global regularization and the nonsmooth regularization resulted in similar final heat loads and the local regularization found a slightly higher final heat load.

\begin{figure}
    \centering
    \includegraphics[width=\linewidth]{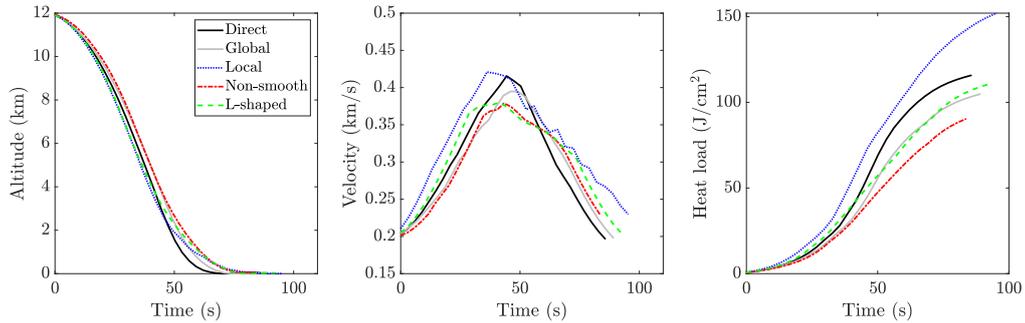}
    \caption{Solutions for the Aerothermodynamic problem \eqref{eq:OpCoProb}}\label{fig:trajectory}
\end{figure}

\subsection{Summary of the Numerical Tests}\label{ssec:summaryNumerics}

The previous examples show that all four regularization methods can successfully be applied to both truss design and optimal control problems with realistic and complex modelling.
The computed solutions are at least as good as the result of the direct approach without regularization.
In case of the aerothermodynamic problem \eqref{eq:OpCoProb}, the L-shaped regularization was even able to decrease the final heat load by approx. 20\% compared to the direct approach, and the academic example \eqref{eq:academicTruss} indicates that the regularization methods are less dependent on the initial point than the direct approach.

Discriminating between the four regularization methods based on these numerical results is subtle:
The global regularization generated satisfactory solutions for all text examples and was the most robust with respect to the initial point in the academic example \eqref{eq:academicTruss}.
However, it also had the highest iterations numbers in this example.
The local regularization was more sensitive to the choice of initial points in the academic example \eqref{eq:academicTruss} than the other regularizations and generated the worst solution for the aerothermodynamic problem \eqref{eq:OpCoProb}.
The L-shaped regularization found the global optimum in the academic example \eqref{eq:academicTruss} more often than all other approaches and the best solution for the aerothermodynamic problem \eqref{eq:OpCoProb}.
The nonsmooth regularization finally was the only approach to terminate in a suboptimal solution in the ten-bar truss problem \eqref{eq:tenbar}.
\section{Final Remarks}\label{sec:final}

In this paper, we analyzed four one-parameter regularization schemes for MPVCs with a focus on their theoretical convergence properties in the exact and inexact case as well as on the regularity of the resulting feasible sets.
The central observation here is that the simplest approach, the global regularization has the worst theoretical properties in the exact setting but is able to retain those in the inexact setting.
The other three, more involved regularization schemes have better theoretical properties, if one is able to compute KKT points of the regularized problems.
But when only $\eps$-stationary points can be computed, their properties are worse than those of the global regularization.

A second result of the theoretical analysis is that the transfer of these regularization schemes from MPCCs to MPVCs sometimes requires additional assumptions not present in the MPCC setting.
These are needed to cope with the sign constraint $\eta^G_i \geq 0$ for $i \in I_{00}$, which is required already for weak stationarity.

In addition to the theoretical analysis, we also performed numerical tests based on three examples from truss design and optimal control.
These examples illustrate that the use of a regularization is beneficial compared to the direct application of an NLP solver to the MPVC.
The numerical results also indicate that the global regularization is quite robust, but it can be slow and terminate in suboptimal solutions.
Contrary to this, the L-shaped regularization seems to be faster and find better solutions, but might be less robust with respect to the initial point.
Those observations would also fit well with the theoretical analysis.
However, in order to be able to make solid claims in this regard, a more extensive numerical study is necessary.
Unfortunately, at the moment no suitable collection of MPVC test problems is available.
\section*{Funding}
The work of T. Hoheisel was supported by NSERC Discovery Grant RGPIN-2017-04035.
The work of B. Pablos was supported by the Bavarian Research Alliance and Munich Aerospace.
The work of A. Pooladian was supported by NSERC CGS-M and Lorne Trottier Accelerator Fellowship.
The work of A. Schwartz was supported by the Excellence Initiative of the German Federal and State Governments and the Graduate School of Computational Engineering at Technische Universit\"at Darmstadt.
The work of L. Steverango was supported by an ISM Undergraduate Summer Research Scholarship.

\printbibliography

@article{AcK08,
  title={Mathematical programs with vanishing constraints: optimality conditions and constraint qualifications},
  author={Achtziger, Wolfgang and Kanzow, Christian},
  journal={Mathematical Programming},
  volume={114},
  number={1},
  pages={69--99},
  year={2008},
  publisher={Springer}
}

@article{AHK13,
  title={A smoothing-regularization approach to mathematical programs with vanishing constraints},
  author={Achtziger, Wolfgang and Hoheisel, Tim and Kanzow, Christian},
  journal={Computational Optimization and Applications},
  volume={55},
  number={3},
  pages={733--767},
  year={2013},
  publisher={Springer}
}

@article{AHK12,
  title={On a relaxation method for mathematical programs with vanishing constraints},
  author={Achtziger, Wolfgang and Hoheisel, Tim and Kanzow, Christian},
  journal={GAMM-Mitteilungen},
  volume={35},
  number={2},
  pages={110--130},
  year={2012},
  publisher={Wiley Online Library}
}

@article{AMS05,
  title={The CPLD condition of Qi and Wei implies the quasinormality qualification},
  author={Andreani, R. and Mart{\`{i}}nez, J.M. and Schuverdt, M.L.},
  journal={Journal of Optimization Theory and Applications},
  volume={125},
  pages={473--485},
  year={2005}
}

@book{BaS12,
  title={Foundations of Optimization},
  author={Bazaraa, Mokhtar S. and Shetty, Chitharanjan Marakada},
  volume={122},
  year={2012},
  publisher={Springer Science \& Business Media}
}

@article{BeG17,
  title={An SQP method for mathematical programs with vanishing constraints with strong convergence properties},
  author={Benko, Mat{\'u}{\v{s}} and Gfrerer, Helmut},
  journal={Journal of Computational Optimization and Applications},
  volume={67},
  number={2},
  pages={361--399},
  year={2017},
  publisher={Springer}
}

@article{DSS12,
  title={Mathematical programs with vanishing constraints: critical point theory},
  author={Dorsch, Dominik and Shikhman, Vladimir and Stein, Oliver},
  journal={Journal of Global Optimization},
  volume={52},
  number={3},
  pages={591--605},
  year={2012},
  publisher={Springer}
}

@article{DHM19,
author = {J.-P. Dussault and M. Haddou and T. Migot},
title = {Mathematical programs with vanishing constraints: constraint qualifications, their applications, and a new regularization method},
journal = {Optimization},
volume = {68},
number = {2-3},
pages = {509-538},
year  = {2019},
publisher = {Taylor & Francis},

}

@phdthesis{Hoh09,
  title={Mathematical programs with vanishing constraints},
  author={Hoheisel, Tim},
  school={University of W\"urzburg},
  year={2009},
  type={Dissertation}
}

@article{HoK09,
  title={On the Abadie and Guignard constraint qualification for mathematical progams with vanishing constraints},
  author={Hoheisel, Tim and Kanzow, Christian},
  journal={Optimization},
  volume={58},
  pages={431--44},
  year={2009}
}

@article{HoK08,
  title={Stationary conditions for mathematical programs with vanishing constraints using weak constraint qualifications},
  author={Hoheisel, Tim and Kanzow, Christian},
  journal={Journal of Mathematical Analysis and Applications},
  volume={337},
  number={1},
  pages={292--310},
  year={2008},
  publisher={Academic Press}
}

@article{HoK07,
  title={First-and second-order optimality conditions for mathematical programs with vanishing constraints},
  author={Hoheisel, Tim and Kanzow, Christian},
  journal={Applications of Mathematics},
  volume={52},
  number={6},
  pages={495--514},
  year={2007},
  publisher={Springer}
}

@article{HKO10,
  title={Exact penalty results for mathematical programs with vanishing constraints},
  author={Hoheisel, Tim and Kanzow, Christian and Outrata, Ji{\v{r}}{\`{i}} V.},
  journal={Nonlinear Analysis: Theory, Methods and Applications},
  volume={72},
  number={5},
  pages={2514--2526},
  year={2010},
  publisher={Elsevier}
}

@article{HKS12local,
  title={Convergence of a local regularization approach for mathematical programmes with complementarity or vanishing constraints},
  author={Hoheisel, Tim and Kanzow, Christian and Schwartz, Alexandra},
  journal={Optimization Methods and Software},
  volume={27},
  number={3},
  pages={483--512},
  year={2012},
  publisher={Taylor \& Francis}
}

@article{HKS12new,
  title={Mathematical programs with vanishing constraints: a new regularization approach with strong convergence properties},
  author={Hoheisel, Tim and Kanzow, Christian and Schwartz, Alexandra},
  journal={Optimization},
  volume={61},
  number={6},
  pages={619--636},
  year={2012},
  publisher={Taylor \& Francis}
  
  }

@article{HKS13, 
   title={Theoretical and numerical comparison of relaxation methods for mathematical programs with complementarity Constraints},
   author={Hoheisel, Tim and Kanzow, Christian and Schwartz, Alexandra},
   journal={Mathematical Programming},
   volume={137},
   number={},
   pages={257--288},
   year={2013},
   publisher={Springer}
}

@article{IzP09,
  title={Optimality conditions and Newton-type methods for mathematical programs with vanishing constraints},
  author={Izmailov, Alexey Feridovich and Pogosyan, A.L.},
  journal={Computational Mathematics and Mathematical Physics},
  volume={49},
  number={7},
  pages={1128--1140},
  year={2009},
  publisher={Springer}
}

@article{IzS09,
  title={Mathematical programs with vanishing constraints: optimality conditions, sensitivity, and a relaxation method},
  author={Izmailov, Alexey F and Solodov, Mikhail V.},
  journal={Journal of Optimization Theory and Applications},
  volume={142},
  number={3},
  pages={501--532},
  year={2009},
  publisher={Springer}
}

@Inbook{Jung2013,
author="Jung, Michael N.
and Kirches, Christian
and Sager, Sebastian",
editor="J{\"u}nger, Michael
and Reinelt, Gerhard",
title="On Perspective Functions and Vanishing Constraints in Mixed-Integer Nonlinear Optimal Control",
bookTitle="Facets of Combinatorial Optimization: Festschrift for Martin Gr{\"o}tschel",
year="2013",
publisher="Springer Berlin Heidelberg",
}

@article{KDB09,
  title={A new regularization scheme for mathematical programs with complementarity constraints},
  author={Kadrani, Abdeslam and Dussault, Jean-Pierre and Benchakroun, Abdelhamid},
  journal={SIAM Journal on Optimization},
  volume={20},
  number={1},
  pages={78--103},
  year={2009},
  publisher={SIAM}
}

@article{KaS13,
  title={A new regularization method for mathematical programs with complementarity constraints with strong convergence properties},
  author={Kanzow, Christian and Schwartz, Alexandra},
  journal={SIAM Journal on Optimization},
  volume={23},
  number={2},
  pages={770--798},
  year={2013},
  publisher={SIAM}
}

@article{KaS15,
  title={The price of inexactness: convergence properties of relaxation methods for mathematical programs with complementarity constraints revisited},
  author={Kanzow, Christian and Schwartz, Alexandra},
  journal={Mathematics of Operations Research},
  volume={40},
  number={2},
  pages={253--275},
  year={2015},
  publisher={INFORMS}
}

@article{ScS00,
  title={Mathematical programs with complementarity constraints: Stationarity, optimality, and sensitivity},
  author={Scheel, Holger and Scholtes, Stefan},
  journal={Mathematics of Operations Research},
  volume={25},
  number={1},
  pages={1--22},
  year={2000},
  publisher={INFORMS}
}

@article{Sch01,
  title={Convergence properties of a regularization scheme for mathematical programs with complementarity constraints},
  author={Scholtes, Stefan},
  journal={SIAM Journal on Optimization},
  volume={11},
  number={4},
  pages={918--936},
  year={2001},
  publisher={SIAM}
}

@article{StU10,
  title={A new relaxation scheme for mathematical programs with equilibrium constraints},
  author={Steffensen, Sonja and Ulbrich, Michael},
  journal={SIAM Journal on Optimization},
  volume={20},
  number={5},
  pages={2504--2539},
  year={2010},
  publisher={SIAM}
}

@article{QiW00,
  title={On the constant positive linear dependence condition and its application to SQP methods},
  author={Qi, Liqun and Wei, Zengxin},
  journal={SIAM Journal on Optimization},
  volume={10},
  number={4},
  pages={963--981},
  year={2000},
  publisher={SIAM}
}

@article{ChG97,
  title={$\varepsilon$-relaxed approach in structural topology optimization},
  author={Cheng, G.D. and Guo, Xiao},
  journal={Structural optimization},
  volume={13},
  number={4},
  pages={258--266},
  year={1997},
  publisher={Springer}
}

@article{Kir90,
  title={On singular topologies in optimum structural design},
  author={Kirsch, Uri},
  journal={Structural optimization},
  volume={2},
  number={3},
  pages={133--142},
  year={1990},
  publisher={Springer}
}

@book{Ger11,
  title={Optimal control of ODEs and DAEs},
  author={Gerdts, Matthias},
  year={2011},
  publisher={Walter de Gruyter}
}

@inproceedings{PMB13,
  title={Re-entry trajectory optimization for a SSTO vehicle in the presence of atmospheric uncertainties},
  author={Pescetelli, Fabrizio and Minisci, Edmondo and Brown, Richard},
  booktitle={5th European Conference for Aeronautics and Space Sciences, EUCASS},
  year={2013}
}

@article{AAS95,
  title={Cooling of aerospace plane using liquid hydrogen and methane},
  author={Al-Garni, Ahmed Z. and Ahmed, Saad A. and Sahin, Ahmet Z. and Yilbas, Bekir S.},
  journal={Journal of Aircraft},
  volume={32},
  number={3},
  pages={539--546},
  year={1995}
}

@incollection{CPW09,
  title={Instationary heat-constrained trajectory optimization of a hypersonic space vehicle by ODE--PDE-constrained optimal control},
  author={Chudej, Kurt and Pesch, Hans Josef and W{\"a}chter, Markus and Sachs, Gottfried and Le Bras, Florent},
  booktitle={Variational Analysis and Aerospace Engineering},
  pages={127--144},
  year={2009},
  publisher={Springer}
}

\end{document}